    \let\Cref\crtCref
    \let\cref\crtcref
\theoremstyle{plain}
\newtheorem{definition}       {Definition}    [section]
\newtheorem{theorem}       {Theorem}    [section]
\newtheorem{conjecture}       {Conjecture}    [section]
\newtheorem{lemma}         {Lemma}      [section]
\newtheorem{proposition}         {Proposition}      [section]
\newtheorem{remark}        {Remark}     [section]
\newcommand\ZZ{\mathbb{Z}}
\newcommand\RR{\mathbb{R}}
\begin{document}
\newcommand\CH{\operatorname{CH}}
\newcommand\Int{\operatorname{Int}}
\newcommand\absCH[1]{\lvert\CH(#1)\rvert}

\title{On existence of a compatible triangulation with the double circle order type}
\author{Hong Duc Bui}
\maketitle

\begin{abstract}
	We show that the ``double circle'' order type and some of its generalizations have a compatible triangulation
	with any other order types with the same number of points and number of edges
	on convex hull,
	thus proving another special case of the conjecture in \cite{Aichholzer_2003}.
\end{abstract}

\section{Introduction}

Let $P$ be a set of points on the plane.

\phantomsection\label{def_general_position}
Following definition in \cite{Aichholzer_2003}, we say $P$ is in general position if no three points in $P$ are collinear.
For the rest of this article, we implicitly assume point sets are in general position when convenient.

We say $P$ is in \emph{general$^+$ position} if $P$ is in general position and no three lines through six distinct points in $P$ are concurrent.

Define $\CH(P)$ to be the convex hull of $P$.

\phantomsection\label{def_order_type}
We define the concept of order types: Two point sets $P$ and $Q$ are said to have the same order type if there is a bijection $f \colon P \to Q$ such that $(p_i, p_j, p_k)$ are in counterclockwise order if and only if
$(f(p_i), f(p_j), f(p_k))$ are in counterclockwise order.

Given a point set $P$,
define an \emph{edge} of $P$ to be a line segment with two distinct endpoints in $P$.
A \emph{triangulation} of $P$ is a maximal set of non-intersecting edges of $P$.

Consider two point sets $P = \{ p_1, \dots, p_n \}$ and $Q= \{ q_1, \dots, q_n\}$ for some positive integer $n$.
Let $f\colon P \to Q$ be a bijection.

\phantomsection\label{def_compatible_triangulation}
We say $(T_P, T_Q)$ is a \emph{compatible triangulation} (or joint triangulation) of $(P, Q)$
with respect to the mapping $f$
if $T_P$ is a triangulation of $P$,
$T_Q$ is a triangulation of $Q$,
and whenever the edge $(p_i, p_j)$ is in $T_P$,
the edge $(f(p_i), f(p_j))$ is in $T_Q$, and vice versa.

In order for a compatible triangulation to exist
with respect to some bijection $f$,
it is necessary that $|P| = |Q|$ and $\absCH{P} = \absCH{Q}$.

In \cite{Aichholzer_2003}, its sufficiency was conjectured.
In fact, something stronger was conjectured---even when a bijection of the points on the convex hull are prescribed, it is conjectured that there is still a compatible triangulation as long as the prescribed bijection is a cyclic shift of the convex hulls:
\begin{conjecture}\label{conjecture:main}
	Let $P$ and $Q$ be two arbitrary point sets such that $|P|=|Q|$.
	For any bijection $f_0 \colon \CH(P) \to \CH(Q)$
	preserving the cyclic order of the points on the convex hull,
	there exists some bijection $f \colon P \to Q$ extending $f_0$
	such that there exists a compatible triangulation of $(P, Q)$
	with respect to $f$.
\end{conjecture}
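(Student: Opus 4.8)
The plan is to induct on $n=|P|=|Q|$, holding the prescribed hull bijection $f_0$ in place and choosing $f$ on the remaining points as the recursion proceeds. The base case is $n=\absCH P=\absCH Q$, i.e. both $P$ and $Q$ in convex position. There the statement is essentially free: a triangulation of a convex polygon is the same data as a maximal family of pairwise non-crossing diagonals, and whether two diagonals cross depends only on the cyclic order of their four endpoints, which $f_0$ preserves. Hence the fan triangulation $T_P=\{(p_1,p_j):3\le j\le n-1\}$ is carried by $f_0$ to a fan triangulation of $Q$, so $(T_P,f_0(T_P))$ is compatible with $f=f_0$; in fact \emph{every} triangulation of $P$ is carried to one of $Q$, a freedom the inductive step will use.

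For $n>\absCH P$ I would try to produce a \emph{compatible split}. The simplest version is a single diagonal: two non-adjacent hull indices $a,b$ such that the two subpolygons of $\CH(P)$ cut off by the chord $p_ap_b$ contain the same numbers of points of $P$ as the two subpolygons of $\CH(Q)$ cut off by $q_aq_b$ contain points of $Q$. Given this, add $(p_a,p_b)$ to $T_P$ and $(q_a,q_b)$ to $T_Q$, match the interior points across the chord arbitrarily subject to the (now equal) counts, and recurse on the two subinstances; each is a point set whose hull is one of the subpolygons, and each has strictly fewer than $n$ points because at least one hull vertex on the opposite arc is lost. When no balanced chord exists one needs the more flexible move: a hull triple $\{a,b,c\}$ together with an interior point $p$ of $P$ and an interior point $q$ of $Q$ with $p\in\CH(\{p_a,p_b,p_c\})$ and $q\in\CH(\{q_a,q_b,q_c\})$, such that the (at most) six regions into which $\CH(P)$ is divided by the chords $p_ap_b,p_bp_c,p_cp_a$ and the spokes $p_ap,p_bp,p_cp$ carry the same point counts as the corresponding six regions on the $Q$ side. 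One then adjoins all six edges to $T_P$ and their $f_0$-images to $T_Q$ and recurses on the three outer convex subpolygons and the three inner subtriangles; since $p$ (resp. $q$) becomes a hull vertex of the inner pieces and every region receives at most $n-\absCH P-1$ of the remaining interior points, the induction hypothesis applies to each piece. Degenerate variants — a chord $p_ap_b$ that is actually a hull edge, or $\absCH P=3$ so that all three chords are hull edges — are the same construction with fewer pieces, and general position of $P$ (and, after a limiting argument, of $Q$) lets one keep $p,q$ off the finitely many segments that would cause a non-generic incidence.

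The main obstacle — and it is the real content of the conjecture — is showing that a compatible split always exists. A balanced chord need not: one can arrange $P$ (not in convex position), $Q$, and $f_0$ agreeing only on the hull so that for every hull chord the two sides are unbalanced, which forces one into the six-region move, and even there the natural proof strategy runs into trouble. Sweeping the apex $p$ around, or rotating the anchor triple $\{a,b,c\}$, moves the six counts on each side through an integer-valued discrete path, and the two paths (one governed by $P$, one by $Q$) need not ever pass through a common value — precisely the phenomenon that makes the whole conjecture hard. I would attack this by a two-parameter continuous relaxation: replace the discrete interior-point counts by a common probability measure transported between $P$ and $Q$ (or by areas after an order-type-preserving normalization), use a ham-sandwich / Borsuk--Ulam-type argument to extract, over the reals, an apex position and anchor triple making all six masses balanced, and then perturb back to an actual point $p\in P$, $q\in Q$ without destroying the balance. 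Whether that final perturbation step can be carried out in full generality is exactly where I expect the argument to stall; it is also exactly the point at which the rigid structure of the double circle — one interior point tucked against each hull edge, so that the counts move in a completely controlled way — makes the problem tractable, which is why that configuration is worth isolating and settling separately, as the present paper does.
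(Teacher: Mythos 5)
You are attempting to prove \Cref{conjecture:main}, which the paper itself does not prove and does not claim to prove: it is stated as an open conjecture, and the paper only establishes special cases (\Cref{theorem:universality_double_circle} and \Cref{theorem:result_general_double_circle_3}, i.e.\ that the double circle and its generalizations are universal). Your proposal is, by your own admission, not a proof either: everything is reduced to the existence of a ``compatible split'' (a balanced hull chord, or the balanced six-region configuration), and you state yourself that the ham-sandwich/continuous-relaxation argument is expected to stall at the perturbation step that converts real-valued balanced masses back into an actual interior point $p\in P$ and $q\in Q$ with the six integer counts matching on both sides simultaneously. That step is not a technicality; it is precisely the discrete obstruction that constitutes the conjecture, so the proposal has a genuine and central gap rather than a repairable rough edge.

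There is also a second, independent gap in the inductive framework itself, even granting a balanced split. Your induction hypothesis needs each subinstance pair to satisfy the hypotheses of the conjecture: equal total size \emph{and} convex hulls of equal cardinality together with a prescribed cyclic bijection between them. Cutting $\CH(P)$ by a chord $p_ap_b$ (or by the chords and spokes of the six-region move) only equalizes total point counts; the convex hull of the sub-point-set on the $P$ side will in general pick up formerly interior points near the cutting segments, and the number and cyclic arrangement of these new hull vertices need not agree with what happens on the $Q$ side. Hence ``match the interior points across the chord arbitrarily subject to the counts'' does not set up a valid recursive call, and the pieces cannot simply be glued. This is exactly why the paper restricts to point sets with an unavoidable spanning cycle: there the image of that cycle pins down which interior points of $Q$ must be attached to which hull edge, and \Cref{convex_subdivision} (plus \Cref{subdivide_2} and the polygon-porting results) supplies the controlled decomposition that your general-position argument lacks. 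Your closing observation that the rigidity of the double circle is what makes the problem tractable is accurate, but it is an observation about why the general argument fails, not a proof of the statement.
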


This conjecture was recently mentioned in \cite{o2018open}.

\phantomsection\label{def_cyclic}
Formally, ``$f_0$ preserves the cyclic order of the points on the convex hull'' means the following:
\begin{equation}
	\crefalias{equation}{statement}
	\label{statement_cyclic}
	\text{\parbox{0.75\textwidth}{
			for all $p_1, p_2 \in\CH(P)$, edge $(p_1,p_2)$ is in $\CH(P)$ if and only if edge $(f_0(p_1), f_0(p_2))$ is in $\CH(Q)$.
	}}
\end{equation}
For short, we call such a mapping \emph{cyclic}.

\phantomsection\label{def_universal}
For convenience, we say a point set $P$ is \emph{universal}
if for all point sets $Q$ and cyclic mapping $f_0 \colon \CH(P) \to \CH(Q)$,
the conclusion of \Cref{conjecture:main} holds for $(P, Q, f_0)$.

For example, it is obvious that:
\begin{theorem}
	If $P =\CH(P)$, then $P$ is universal.
\end{theorem}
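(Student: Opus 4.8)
The plan is to exploit that, under the hypothesis $P = \CH(P)$, both point sets are in convex position and that the crossing pattern of edges is then governed purely by the cyclic order of the hull. First I would observe that since $f_0 \colon \CH(P) \to \CH(Q)$ is a bijection we have $\absCH{Q} = \absCH{P} = |P| = |Q|$, so every point of $Q$ lies on its convex hull; hence $Q$ is also in convex position and $f_0$ is already a bijection $P \to Q$. There is therefore nothing to extend, and I would simply take $f := f_0$.

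Next I would record the elementary characterization of crossings in convex position: if $a,b,c,d$ are four distinct points of a point set in convex position, then the segments $(a,c)$ and $(b,d)$ have intersecting interiors if and only if $b$ and $d$ lie on different open arcs of the hull boundary cut off by $a$ and $c$ — equivalently, the four points occur in the cyclic order $a,b,c,d$ around the hull, up to rotation and reflection; two edges sharing an endpoint never cross in their interiors. Now the cyclic condition \cref{statement_cyclic} says exactly that $f_0$ maps hull edges of $P$ to hull edges of $Q$, and since the hull edges of a convex $n$-gon form an $n$-cycle, this means $f_0$ is an automorphism of that cycle, i.e.\ a rotation or a reflection of the cyclic sequence of hull vertices. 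Either kind of map preserves the interleaving of $4$-tuples along the cycle, so I would conclude: for any two edges $e,e'$ of $P$, the edges $e$ and $e'$ cross if and only if $f(e)$ and $f(e')$ cross.

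From this the theorem is immediate. The map $f$ induces an isomorphism from the crossing graph of $P$ (vertices $=$ edges of $P$, adjacency $=$ intersecting interiors) onto the crossing graph of $Q$, hence carries maximal independent sets to maximal independent sets. By the definition used in this paper a triangulation is precisely a maximal set of pairwise non-crossing edges, i.e.\ a maximal independent set of the crossing graph; so if $T_P$ is any triangulation of $P$ (for instance the fan from a fixed vertex), then $T_Q := \{(f(p_i),f(p_j)) : (p_i,p_j)\in T_P\}$ is a triangulation of $Q$, and $(T_P, T_Q)$ is by construction a compatible triangulation of $(P,Q)$ with respect to $f$.

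I do not expect a genuine obstacle here; the only points needing care are bookkeeping ones — checking that \cref{statement_cyclic} forces $f_0$ to be a dihedral symmetry of the hull, so that the possibility of an orientation reversal is harmless, and confirming at the outset that $Q$ is itself in convex position so that the convex-position description of crossings applies to $T_Q$ as well as to $T_P$.
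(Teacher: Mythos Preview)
Your proposal is correct and follows essentially the same approach as the paper: the paper's proof is the single sentence ``any triangulation of $P$ can be transported to a triangulation of $Q$,'' and you have carefully filled in the details behind that sentence (both sets are in convex position, $f_0$ is already a full bijection and acts as a dihedral symmetry of the hull cycle, hence preserves the interleaving condition that characterizes crossings). Your treatment is more thorough than the paper's, but the underlying idea is identical.
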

This is because any triangulation of $P$ can be transported to a triangulation of $Q$. In other words, there is only one convex order type for each number of points $n$.

In fact, \cite{Aichholzer_2003} proved something stronger:
\begin{theorem}
	If $|P-\CH(P)|\leq 3$ or $|P|\leq 8$, then $P$ is universal.
\end{theorem}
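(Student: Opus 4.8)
The plan is to case on the number $m := |P - \CH(P)|$ of points of $P$ not on its convex hull; since $|P| = |Q|$ and $\absCH{P} = \absCH{Q}$, the set $Q$ has exactly $m$ such points too. The case $m = 0$ is the preceding theorem. For $m = 1$, let $p$ be the unique interior point of $P$ and $q$ that of $Q$. As $p$ lies strictly inside the convex polygon $\CH(P)$, each segment from $p$ to a hull vertex stays inside $\CH(P)$ and these segments are pairwise non-crossing, so $T_P := \{\,\text{hull edges of }P\,\} \cup \{\, pv : v \in \CH(P)\,\}$ is a triangulation; define $T_Q$ analogously with apex $q$. Extending $f_0$ by $p \mapsto q$ yields a bijection $f$, and $(T_P, T_Q)$ is compatible: hull edges correspond to hull edges because $f_0$ is cyclic, and the two fans correspond edge for edge.

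For $m \in \{2, 3\}$ I would argue directly, but with a genuine case distinction. The idea is that, once $f$ is fixed on the $m$ interior points, a triangulation of a point set with only two or three interior points can be assembled from a short list of local pieces: fans centred at an interior point, segments between interior points, and occasionally an ``ear'' triangle $v_{i-1}v_i v_{i+1}$ cut off at a hull vertex. Which pieces fit together is dictated by combinatorial data already recorded in the order type — for $m = 2$, with interior points $p_1, p_2$, essentially the partition of the hull vertices into the two arcs on either side of the line $p_1p_2$; for $m = 3$ one must also track the mutual position of the three interior points. The catch is that these data need not be preserved by $f_0$, so the naive ``transport the same construction to $Q$'' can fail: $f_0$ need not carry the $p_1p_2$-partition of $\CH(P)$ to the $q_1q_2$-partition of $\CH(Q)$ (already visible on a square hull). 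One must therefore exploit the freedom that remains — the choice of $f$ among the $m!$ bijections of the interior points, and the freedom in placing fan apices and introducing ears — to exhibit, for every pair of configurations, one combinatorial triangulation realizable simultaneously in $P$ and in $Q$. I expect the $m = 3$ case to be the crux of the whole proof: the list of configurations is far longer, several combinatorially special sub-configurations need separate treatment, and one must verify that a local piece chosen in $P$ does not force a crossing elsewhere once carried across $f_0$ into $Q$.

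For $|P| \le 8$, first note that if in addition $|P - \CH(P)| \le 3$ we are done by the case above, so we may assume $P$ has at least four interior points; then $\absCH{P} \le |P| - 4 \le 4$, and as a hull has at least three vertices, $(\absCH{P}, |P|) \in \{(3,7), (3,8), (4,8)\}$. In each of these three cases there are only finitely many order types for $P$, finitely many for $Q$ with matching size and hull size, and finitely many cyclic bijections $f_0$; so the statement reduces to a finite check, small enough to run by exhaustive search over the enumeration of all order types on at most eight points — for each admissible $(P, Q, f_0)$, enumerate the triangulations of $P$, transport each across every extension of $f_0$ over the interior points, and test whether the image is a triangulation of $Q$. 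The only obstacle here is the size of the search, which is entirely feasible; the real work, and the main obstacle overall, lies in the $m = 3$ case above.
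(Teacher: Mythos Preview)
Your proposal is correct and takes essentially the same approach as the paper: the paper does not supply its own proof of this theorem but cites \cite{Aichholzer_2003}, summarizing the argument as ``tedious case analysis'' for the first clause and ``computer brute force through all pairs of point sets'' for the second, which is exactly the plan you outline (hand case analysis on the number of interior points $m\le 3$, exhaustive machine search over the finitely many order types for the residual $|P|\le 8$ cases). Your reduction of the $|P|\le 8$ half to the three parameter pairs $(\absCH{P},|P|)\in\{(3,7),(3,8),(4,8)\}$ is also correct.
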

The proof of the first statement involves tedious case analysis,
and the proof of the second statement involves using computer brute force through all pairs of point sets.
Other than the cases mentioned, no general families of point sets are known to be universal.

In this article, we prove in \Cref{theorem:result_general_double_circle_3} that
a certain family of point sets is universal,
which gives further evidence towards \Cref{conjecture:main}.

\section{Related Works}

In this section, we list some works on the topic of compatible triangulation.

The problem of compatible triangulation was first considered in \cite{Aichholzer_2003},
which states \Cref{conjecture:main} and make some progress toward it.
\cite{Danciger_2006} studies this problem further.

When the bijection of points is prescribed, \cite{Saalfeld_1987} considers the case
where the points are inside a rectangle and show that there always exist a compatible triangulation
when some additional points, called Steiner points, are added. \cite{diwan2011jointtriangulationssetspoints}
gives an $O(n^3)$ algorithm that conjecturally determines whether there exists a compatible triangulation
with no Steiner points added, and find one if it exists.

\section{Notation}

\begin{table}
	\centering
	\begin{tabular}{cc}
		Term & Note \\ \hline
		$\CH(P)$ & convex hull \\
		general position & see \Cref{def_general_position} \\
		general$^+$ position & see \Cref{def_general_position} \\
		order type & see \Cref{def_order_type} \\
		compatible triangulation & see \Cref{def_compatible_triangulation} \\
		universal point set & see \Cref{def_universal} \\
		cyclic mapping & see \Cref{def_cyclic} \\
		$\mathcal T(P, Q)$, $\mathcal T(P, Q, f_0)$ & see \Cref{def_script_T} \\
		double circle & see \Cref{def_double_circle} \\
		unavoidable spanning cycle & see \Cref{def_unavoidable_cycle} \\
		generalized double circle & see \Cref{def_general_double_circle} \\
	\end{tabular}
	\caption{Some terms defined in this article. If \texttt{hyperref} is enabled, clicking on the links in the table will go directly to the definition.}
\end{table}

Let $P$ be any point set.

We define the double circle as in \cite{Rutschmann_2023}.

\begin{definition}[Double circle]
	\label{def_double_circle}
	Let $P = P_1 P_2 \dots P_n$ be a regular polygon with center $O$.
	For each $1 \leq i \leq n$, let point $A_i$ be the point along the segment
	connecting $O$ and the midpoint of segment $P_i P_{i+1}$,
	and has distance $\varepsilon$ to the segment, where $\varepsilon$ is a sufficiently small positive constant.

	It can be seen that as $\varepsilon \to 0^+$, the order type changes only finitely many times.
	Pick $\varepsilon$ in such a way that for all $0<\varepsilon'<\varepsilon$,
	the order type would be the same if $\varepsilon$ is replaced with $\varepsilon'$.

	Then the double circle is the point set $\{ P_1, \dots, P_n, A_1, \dots, A_n \}$.
\end{definition}

Note that if $i=n$ then $i+1=n+1$, but the point $P_{n+1}$ doesn't exist.
For convenience, we assume indices wraparound when convenient,
so that for example, when $i=n$ then $P_{i+1} = P_1$.

\phantomsection\label{def_unavoidable_cycle}
Also following \cite{Rutschmann_2023},
define an unavoidable spanning cycle as follows.
For a point set $P$ with $|P|=n$,
we say $P_1 P_2 \dots P_n P_1$ is an \emph{unavoidable spanning cycle} of $P$
if $P = \{ P_1, \dots, P_n \}$, and
for every $1 \leq i \leq n$, $P_i P_{i+1}$ belongs to every triangulation of $P$.

Note that the natural cycle on the double circle, $P_1 A_1 P_2 A_2 \dots P_n A_n P_1$, is an unavoidable spanning cycle.
See \Cref{fig:double_circle_illustration} for an illustration.

\begin{figure}
    \centering
    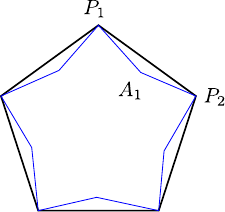
    \caption{Illustration for a double circle. Its unavoidable spanning cycle is colored in blue.}
    \label{fig:double_circle_illustration}
\end{figure}

We generalize \Cref{def_double_circle} as follows. This method of generalization from the double circle
to the generalized double circle is similar to the generalized double chain
defined in \cite{Dumitrescu_2013}.
Also note that 
the set of vertices of an almost-convex polygon,
as defined in \cite{HuNo97},
is a generalized double circle.

\begin{definition}[Generalized double circle]
	\label{def_general_double_circle}
For some integer $n \geq 3$,
for positive integers $c_1, c_2, \dots, c_n$,
define a $(c_1, \dots, c_n)$-double circle to be a point set $P$
such that:
\begin{itemize}
	\item $P$ has $n+\sum_{i=1}^n c_i$ points,
	\item $\CH(P) = n$,
	\item for each $1 \leq i \leq n$, along each edge $P_i P_{i+1}$ of $\CH(P)$, there are $c_i$ points
		$A_{i, 1}, A_{i, 2}, \dots, A_{i, c_i}$ placed along the edge, and slightly offseted towards the center of the polygon,
	\item the polygon $P_i A_{i, 1}, A_{i, 2}, \dots, A_{i, c_i} P_{i+1}$ (in that order) is \emph{convex},
	\item the edges $P_i A_{i, 1}$, $A_{i, c_i} P_{i+1}$, as well as $A_{i, j} A_{i, j+1}$ for every $1 \leq j < c_i$
		forms part of the unavoidable spanning cycle.
\end{itemize}
\end{definition}

See \Cref{fig:illustration_general_double_circle} for an illustration of a $(1, 2, 3)$-generalized double circle
and its unavoidable spanning cycle.

\begin{figure}
    \centering
    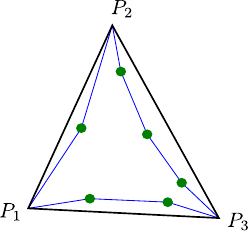
    \caption{A $(1, 2, 3)$-generalized double circle.
		Its unavoidable spanning cycle is also colored in blue.
	}
    \label{fig:illustration_general_double_circle}
\end{figure}

Note that with this definition, the double circle is a $(1,1,\dots, 1)$-double circle.

In the following paragraphs, we introduce the problem of triangulation with Steiner points.
While this notation is not strictly necessary, readers familiar with previous works may find it convenient.

\cite{Aichholzer_2003} also studies the problem of compatible triangulation with Steiner points.
That is, we study the problem of whether there is a compatible triangulation
if $k$ additional points can be added inside the convex hull
of each point set.

Formally,
we say two point sets $P$ and $Q$ have \emph{compatible triangulation with $k$ Steiner points} (per set)
if there exists point sets $S_P$ and $S_Q$ such that:
\begin{itemize}
	\item $|S_P|=|S_Q|= k$,
	\item $P \cap S_P = Q \cap S_P = \varnothing$,
	\item $P \cup S_P$ is in general position;
	\item $Q \cup S_Q$ is in general position;
	\item $\CH(P \cup S_P) = \CH(P)$---in other words, $S_P$ lies inside the convex hull of $P$;
	\item similarly, $\CH(Q \cup S_Q) = \CH(Q)$;
	\item there exists a compatible triangulation of $(P \cup S_P, Q \cup S_Q)$.
\end{itemize}

\phantomsection\label{def_script_T}
For brevity, define $\mathcal T(P, Q)$ to be the minimum $k$ such that point sets $P$ and $Q$ have a compatible triangulation
with $k$ Steiner points;
and for a cyclic mapping $f_0\colon \CH(P) \to \CH(Q)$,
define $\mathcal T(P, Q, f_0)$
to be the minimum $k$ such that point sets $P$ and $Q$ have a compatible triangulation with respect to some bijection $f$ extending $f_0$ with $k$ Steiner points.

We do not require that the bijective function $f\colon P \cup S_P \to Q \cup S_Q$ of the compatible triangulation maps original points to original points and Steiner points to Steiner points. In other words, it is permissible for some $p \in P$ to have $f(P) \in S_Q$, or for $p \in S_P$ to have $f(P) \in Q$.

Clearly, \Cref{conjecture:main} is equivalent to the following:
\begin{quote}
	For all point sets $P$ and $Q$ with $|P|=|Q|$ and $\absCH{P}=\absCH{Q}$,
	for all cyclic mapping $f_0 \colon \CH(P) \to \CH(Q)$,
	then $\mathcal T(P, Q, f_0) = 0$.
\end{quote}

It was proved in \cite[Theorem 3]{Aichholzer_2003} that:
\begin{theorem}
	\label{theorem:existing_result_Steiner_point}
	Let $P$ and $Q$ be two point sets with $h$ points each on the convex hull and $i$ interior points
	(in other words, $\absCH{P}=\absCH{Q}=h$ and $|P|=|Q|=h+i$).
	Then for all cyclic mapping $f_0$, $\mathcal T(P, Q, f_0) \leq i$.
\end{theorem}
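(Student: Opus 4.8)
The plan is to argue by induction on the number $i$ of interior points, the point being that one Steiner point on each side is exactly the budget needed to ``absorb'' one interior point of $P$ and one of $Q$ at a time. The base case $i=0$ is immediate: then $P$ and $Q$ are convex $h$-gons, and since $f_0$ is cyclic it acts on the cyclic order of the hull vertices as an element of the dihedral group, hence preserves which pairs of diagonals cross; so the edge-by-edge image under $f_0$ of any triangulation of $P$ is a triangulation of $Q$, giving a compatible triangulation with no Steiner points, i.e. $\mathcal T(P,Q,f_0)=0$.

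For the inductive step, assume the bound for all point sets with $i-1$ interior points. Pick any interior point $r$ of $P$ and any interior point $s$ of $Q$, and set $P'=P\setminus\{r\}$ and $Q'=Q\setminus\{s\}$. Deleting an interior point leaves the convex hull unchanged, so $P'$ and $Q'$ each have $h$ hull points and $i-1$ interior points and $f_0$ is still a cyclic map $\CH(P')\to\CH(Q')$. By the inductive hypothesis there are Steiner sets $S_{P'},S_{Q'}$ with $|S_{P'}|=|S_{Q'}|=i-1$ inside the respective hulls, together with a compatible triangulation $(T_{P'},T_{Q'})$ of $(P'\cup S_{P'},\,Q'\cup S_{Q'})$ with respect to some $f'$ extending $f_0$. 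After a tiny perturbation of the Steiner points (which changes neither triangulation combinatorially) we may assume $P\cup S_{P'}$ and $Q\cup S_{Q'}$ are in general position, that $r$ lies in the open interior of a triangular face $\Delta_P$ of $T_{P'}$, and that $s$ lies in the open interior of a triangular face $\Delta_Q$ of $T_{Q'}$. Since $f'$ extends $f_0$ it preserves the hull cycle, so a compatible triangulation induces an isomorphism of the two underlying plane triangulations, in particular a bijection of their triangular faces; let $\Delta_Q'$ be the face of $T_{P'}$ with $f'(\Delta_Q')=\Delta_Q$, and $\Delta_P''$ the face of $T_{Q'}$ with $\Delta_P''=f'(\Delta_P)$.

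Now I \emph{trade}: on the $P$-side I add the fixed point $r$ and one fresh Steiner point $t$; on the $Q$-side I add the fixed point $s$ and one fresh Steiner point $u$; and I extend $f'$ by $r\mapsto u$, $t\mapsto s$. Inserting a point into the open interior of a triangular face of a triangulation — joining it to the three vertices of that face — again gives a triangulation. Suppose first that $\Delta_P$ and $\Delta_Q'$ are distinct faces of $T_{P'}$ (equivalently, $\Delta_P$ and $\Delta_Q$ do not correspond under $f'$). Then on the $P$-side insert $r$ into $\Delta_P$ and $t$, placed anywhere in its interior, into $\Delta_Q'$; on the $Q$-side insert $u$, placed anywhere in its interior, into $\Delta_P''$ and $s$ into $\Delta_Q$. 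Because $f'$ maps the vertex triple of $\Delta_P$ onto that of $\Delta_P''$ and that of $\Delta_Q'$ onto that of $\Delta_Q$, the three new $P$-edges at $r$ match the three new $Q$-edges at $u$ under $r\mapsto u$, the three new $P$-edges at $t$ match the three new $Q$-edges at $s$ under $t\mapsto s$, and every preexisting edge still corresponds; so the two new triangulations are compatible with respect to $f=f'\cup\{r\mapsto u,\ t\mapsto s\}$.

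The remaining subcase, $\Delta_P=\Delta_Q'$, is the only delicate one: after inserting $r$ into $\Delta_P$ the face that ``ought to'' receive $t$ has already been split into three. The remedy is to choose $u$ inside $\Delta_Q$ so close to one of its three vertices that the fixed point $s$ falls into the interior of the sub-triangle of $\Delta_Q$ spanned by $u$ and the opposite edge — possible because that sub-triangle exhausts $\Delta_Q$ as $u$ tends to the vertex and $s$ is interior to $\Delta_Q$ — then insert $u$ into $\Delta_Q$, insert $s$ into that sub-triangle, and on the $P$-side insert $r$ into $\Delta_P$ and $t$, placed anywhere, into the matching sub-triangle. Again both sides realize the same abstract triangulation and $f=f'\cup\{r\mapsto u,\ t\mapsto s\}$ is a compatible map. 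In either subcase the construction uses $|S_{P'}|+1=i$ Steiner points per set, all inside the respective hulls and in general position, so $\mathcal T(P,Q,f_0)\le i$ and the induction is complete. I expect this last subcase — coordinating the placement of the $Q$-side Steiner point with the prescribed position of $s$ so that the two subdivided faces match combinatorially — to be the main thing requiring care; the remaining obligations (general position, the hull condition for the two new Steiner points, and the face correspondence induced by $f'$) are routine.
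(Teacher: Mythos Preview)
The paper does not actually prove this theorem: it is quoted verbatim as \cite[Theorem 3]{Aichholzer_2003}, so there is no in-paper argument to compare against. Your inductive proof is correct and self-contained; the idea of stripping one interior point from each side, invoking the hypothesis, and then re-inserting each stripped point together with one fresh Steiner point into corresponding faces is exactly the kind of one-point-at-a-time argument one expects here, and your treatment of the coincident-face subcase is fine (in fact you do not even need $u$ close to a vertex: any generic placement of $u$ in $\Delta_Q$ puts $s$ in one of the three sub-triangles, and you then drop $t$ into the matching sub-triangle on the $P$-side).

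One step that deserves a sentence of justification is the claim that a compatible triangulation induces a bijection of triangular faces. It is true, but it is not a tautology: $f'$ is only assumed to be an edge-bijection, and in general a triple of pairwise-adjacent vertices need not bound a face. The clean way to see it is to cone each triangulated disk over a new vertex $\infty$ joined to all hull vertices; the resulting simplicial $2$-spheres have $3$-connected $1$-skeletons, so by Whitney's theorem their face lattices are determined by the graph, and hence the faces of $T_{P'}$ and $T_{Q'}$ correspond under $f'$. Once that is said, the rest of your argument goes through without change.
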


We note that we require
$S_P$ to lie inside the convex hull of $P$ (and similarly for $S_Q$ and $Q$), because otherwise the problem would be easy:
\cite[Theorem 3]{Danciger_2006} shows that only two points need to be added.

\section{A Divide-and-conquer Helper Tool}
\label{sec:divide_and_conquer}

The reason why the double circle and related order types are so frequently analyzed in
articles concerning counting the number of triangulation,
such as \cite{Rutschmann_2023,Dumitrescu_2013,Aichholzer2007,erkinger1998struktureigenschaften,HuNo97},
is that it has an unavoidable spanning cycle,
making it easier to count the number of triangulations.
In this article, this property also helps as follows.

Let $P = \{ P_1, \dots, P_n, A_1, \dots, A_n \}$ be the double circle,
and $Q$ be an arbitrary set of points.

Suppose there is a compatible triangulation $(T_P, T_Q)$ with respect to some mapping $f \colon P \to Q$.
For $1 \leq i \leq n$, define $Q_i = f(P_i)$ and $B_i = f(A_i)$.

Then, the unavoidable spanning cycle $P_1 A_1 P_2 A_2 \dots P_n A_n$
gets mapped to $Q_1 B_1 Q_2 B_2 \dots Q_n B_n$,
therefore, the triangulation $T_Q$ contains all of the edges along the polyline $Q_1 B_1 Q_2 B_2 \dots Q_n B_n$.
This data can be interpreted as the association from each point $B_i \in Q \setminus \CH(Q)$
to an edge $Q_i Q_{i+1}$,
such that none of the triangles $\{Q_i B_i Q_{i+1}\}_{i \in \{ 1, \dots, n \}}$ overlap.

Therefore, if we want to find some $(T_P, T_Q, f)$,
we must first be able to find an association.\footnote{
	This is not to say that any method of finding a compatible triangulation must start with finding an association,
	but the problem of finding an association is no harder than finding a compatible triangulation,
	because from a compatible triangulation you can extract out the association by
	seeing where the unavoidable spanning cycle gets mapped to.
}

In this section, we describe a tool that helps us find such an association.
In fact, we are able to prove a more general version:
\begin{theorem}
	\label{convex_subdivision}
	Given a convex polygon $P = P_1 P_2 \dots P_n$,
	points $\{A_1, \dots, A_m \}$ inside polygon $P$,
	such that $P \cup \{ A_1, \dots, A_m \}$ is in general$^+$ position.
	Let $c_i$ be nonnegative integers such that $\sum_{i=1}^n c_i = m$.

	Then there exists a subdivision of the area inside polygon $P$ into
	parts $R_1$, $\dots$, $R_n$
	such that each $R_i$ is convex, has $P_i P_{i+1}$ as an edge,
	has no point in $A$ on its boundary, has exactly $c_i$ points in $A$ in its interior,
	and none of the $R_i$ overlap.
\end{theorem}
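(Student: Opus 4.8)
The plan is to prove the statement by induction on $n$. To make the induction close I would first generalize it: allow only a subset $D$ of the edges $P_iP_{i+1}$ to be \emph{marked}, ask for convex interior‑disjoint parts $\{R_i\}_{i\in D}$ covering $P$ with $R_i$ having the marked edge $i$ as a face and exactly $c_i$ points of $A$ inside (where $\sum_{i\in D}c_i=m$), and let the unmarked edges simply be swallowed by whichever $R_i$ borders them. \Cref{convex_subdivision} is then the case $D=\{1,\dots,n\}$. The base cases $|D|\le 2$ are easy: for $|D|=1$ take $R=P$, and for $|D|=2$ separate the two parts by a single chord, one endpoint on each of the two arcs of $\partial P$ between the marked edges, sliding one endpoint while the other is fixed — by general position the point‑count on one side changes one at a time and sweeps through every value, in particular through $c_e$.

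For $|D|\ge 3$ the basic move is: choose a marked edge $e=P_aP_{a+1}$ and an auxiliary vertex $P_b$ (with $b\notin\{a,a+1\}$, and with marked edges on both arcs from $P_b$ back to $e$), pick a generic point $W$ in the interior of the triangle $T=\triangle P_bP_aP_{a+1}$, set $R_e:=\triangle P_aP_{a+1}W$, and cut $P$ along the three segments $WP_a,\ WP_b,\ WP_{a+1}$. Since $W\in\operatorname{Int}T$ this splits $P$ into $R_e$ together with two convex polygons $Q_1$ (bounded by the chain $P_b\cdots P_a$ and the segments $[P_a,W],[W,P_b]$) and $Q_2$ (bounded by the chain $P_{a+1}\cdots P_b$ and $[P_b,W],[W,P_{a+1}]$); each is convex because it is carved from the convex polygon $P$ by chords (the segment $WP_b$ lies along the chord from $P_b$ to the point where line $P_bW$ meets $e$). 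Each $Q_k$ carries strictly fewer marked edges than $P$, so I would recurse on $Q_1$ and $Q_2$ with the restricted targets and glue $R_e$ back.

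The counts then decouple. If $\alpha,\beta,\gamma$ denote the number of $A_j$ in the three cevian sub‑triangles $\triangle WP_bP_a$, $\triangle WP_aP_{a+1}=R_e$, $\triangle WP_{a+1}P_b$, then $\#(Q_1\cap A)$ equals $\alpha$ plus a constant depending only on $b$ (because $Q_1$ is a fixed polygon glued to $\triangle WP_bP_a$), and similarly for $Q_2$ and $\gamma$; so the recursion's hypotheses are met exactly when $(\alpha,\beta,\gamma)$ is a prescribed triple $(\alpha^*,c_e,\gamma^*)$ with $\alpha^*+\beta^*+\gamma^*=\#(T\cap A)$. Such a $W$ exists as soon as $\alpha^*,\gamma^*\ge 0$, and then it is produced by the following planar lemma, which I would prove on its own: \emph{for any triangle containing $k$ points of a general‑position set in its interior and any $x+y+z=k$ with $x,y,z\ge 0$, there is an interior point whose three cevian sub‑triangles contain $x$, $y$, $z$ of the points.} I would prove it by a double sweep — rotate a cevian through one vertex until one side carries the correct count (possible since the count jumps by one at a time, by general position), then slide the apex along that cevian tracking how the rest redistribute between the other two sub‑triangles — or, equivalently, note that $O\mapsto(\#R_1(O),\#R_2(O))$ restricts on $\partial T$ to a degree‑one map onto the boundary of $\{x,y\ge 0,\ x+y\le k\}$, so every lattice point inside is attained.

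The hard part is combinatorial: showing that \emph{some} choice of $(e,P_b)$ makes both $\alpha^*$ and $\gamma^*$ nonnegative. This is a statement about how the prefix point‑counts $g(j)=\#(\text{polygon }P_1\cdots P_j\cap A)$ interleave with the prefix target sums $C(j)=c_1+\dots+c_j$ around $P$: some chord decomposition of $P$ must be consistent with the targets. For $n=3,4$ it drops out of the identity $\sum_i\#(\triangle P_{i-1}P_iP_{i+1}\cap A)=2m$ (each $A_j$ lies in exactly two of the $n$ ``ear'' triangles), which forces at least one ear to be large enough; for general $n$ I expect a similar but more careful pigeonhole/intermediate‑value argument, and this is the step I would spend the most effort on. Throughout, general$^+$ position is what lets every sweep meet one relevant line at a time and lets each auxiliary point $W$ be taken off every line through two named points, so no $A_j$ ever lands on a cut and all the inequalities above can be taken strict.
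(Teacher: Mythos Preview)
Your skeleton is the paper's: a recursive split that peels off one region at a time via an interior ``cevian'' point in a triangle, driven by exactly the lemma you isolate (the paper's Proposition~4.2, proved there by the degree/Brouwer argument you sketch as your second alternative). The genuine gap is the step you yourself flag as ``the hard part'': you never prove that an admissible pair $(e,P_b)$ with $\alpha^*,\gamma^*\ge 0$ exists, and your $n=3,4$ heuristic (the ear-triangle identity summing to $2m$) does not even hold for $n=3$ and does not extend to $n\ge 5$. The paper closes this gap with a short concrete argument (Proposition~4.1): fix the pivot at a single vertex, say $P_1$, sort the interior points by angle around $P_1$, and chop the sorted list into consecutive chunks of sizes $c_1,\dots,c_n$. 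If $\lambda_i$ is the angle of the first point in chunk~$i$, then $\lambda_2>\sigma(P_2)=0$ and $\lambda_n<\sigma(P_n)$, so a one-line discrete intermediate-value step produces an $i$ with $\lambda_i>\sigma(P_i)$ and $\lambda_{i+1}<\sigma(P_{i+1})$; for that edge $e=P_iP_{i+1}$ both side-counts are automatically nonnegative. So the ``careful pigeonhole'' you anticipate is just: \emph{fix the pivot, vary the edge}, and compare chunk boundaries to vertex angles --- not the other way round.

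Two smaller remarks. First, your cevian lemma as stated (all of $x,y,z\ge 0$) is slightly stronger than what the paper proves, which requires the count opposite the pivot to be strictly positive; when $c_e=0$ the apex $W$ wants to land on the edge~$e$, and you would need a separate argument to push it strictly inside while keeping that sub-triangle empty. Second, your $D$-generalization and the paper's trick are two different ways to absorb the $c_i=0$ edges: the paper simply adds one artificial interior point arbitrarily close to each zero-target edge, reduces to the all-positive case, and discards those regions afterward. Your route is conceptually cleaner but forces the hard combinatorial step to be proved in the more general marked-edge setting, whereas the paper's dummy-point trick keeps that step in the all-positive case where the angular-sort argument works without modification.
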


See \Cref{fig:convex_subdivision_zero_i} for an illustration of \Cref{convex_subdivision}.
We want each $R_i$ region to have exactly $c_i$ points in $A$.

\begin{figure}
    \centering
    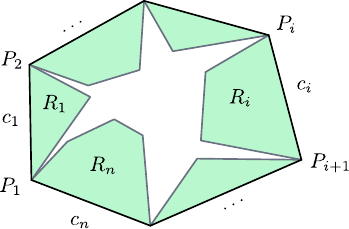
	\caption{Illustration for \Cref{convex_subdivision}.}
    \label{fig:convex_subdivision_zero_i}
\end{figure}

The special case where all $c_i=1$ is equivalent to the association we find.

The core idea is described in the proof of \Cref{prop_triangle_subdivision_3}---%
in fact, \Cref{prop_triangle_subdivision_3} can be seen as almost a special case of \Cref{triangle_subdivision}
with $n = 3$.

\begin{remark}
	\label{remark_extend_weighted_straight_skeleton}
We believe it is possible to extend the proof of \Cref{prop_triangle_subdivision_3}
to work with arbitrary $n$ using weighted straight skeleton,
but we could not handle some degenerate cases.
This idea is explained in more detail in \Cref{subsec_remark_weighted_straight_skeleton}.
\end{remark}

\subsection{Details and Proof of the Algorithm}

\begin{proposition}
	\label{triangle_subdivision}
	Given convex polygon $P = P_1 P_2 \dots P_n$,
	points $\{A_1, \dots, A_m \}$ inside polygon $P$,
	such that $P \cup \{ A_1, \dots, A_m \}$ is in general$^+$ position.
	Let $c_i$ be positive integers such that $\sum_{i=1}^n c_i = m$.
	Then there exists integer $i\in \{ 2, \dots, n-1 \}$ and point $Q$ inside triangle $P_1 P_i P_{i+1}$
	such that within the $m$ points $\{A_1, \dots, A_m \}$,
	there are exactly $c_1+\cdots+c_{i-1}$ points inside polygon $P_1 P_2 \dots P_i Q$,
	$c_i$ points inside triangle $P_i Q P_{i+1}$,
	$c_{i+1}+c_{i+2}+\cdots+c_n$ points inside polygon $P_1 Q P_{i+1} P_{i+2} \dots P_n$.
	Furthermore, $P \cup \{ A_1, \dots, A_m, Q \}$ is in general$^+$ position.
\end{proposition}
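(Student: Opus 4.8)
The plan is to slice the polygon with the fan of diagonals out of $P_1$, reduce to a single triangle, and there split the points by one interior point. Write $T_j:=\triangle P_1P_jP_{j+1}$ for $2\le j\le n-1$. Since the points of $A$ lie inside $P$ and $P\cup A$ is in general$^+$ position (in particular no three of its points are collinear), no $A_k$ lies on the boundary of any $T_j$, so the open triangles $\Int T_2,\dots,\Int T_{n-1}$ partition $A$. Put $a_j:=|A\cap\Int T_j|$, $A_j:=a_2+\dots+a_j$ and $C_j:=c_1+\dots+c_j$, so $A_1=C_0=0$ and $A_{n-1}=C_n=m$. First I would pin down $i$: let $d(j):=A_j-C_j$ for $1\le j\le n-1$; as $c_1,c_n\ge 1$ we get $d(1)=-c_1<0$ and $d(n-1)=c_n>0$, so the largest $j^\star$ with $d(j^\star)\le 0$ satisfies $1\le j^\star\le n-2$, and $i:=j^\star+1\in\{2,\dots,n-1\}$ has $A_{i-1}\le C_{i-1}$ and $A_i\ge C_i$.

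With this $i$, the three regions in the statement decompose along the fan: $P_1P_2\cdots P_iQ$ is the interior-disjoint union of $T_2,\dots,T_{i-1}$ and $\triangle P_1P_iQ$; the region $P_1QP_{i+1}\cdots P_n$ is the interior-disjoint union of $\triangle P_1QP_{i+1}$ and $T_{i+1},\dots,T_{n-1}$; and the middle region is $\triangle P_iQP_{i+1}$. Since no point of $A$ sits on a dividing wall (the diagonals from $P_1$ are $A$-free by general position, and the three segments out of $Q$ will be kept $A$-free by choosing $Q$ in general$^+$ position), the three count requirements reduce to: place $Q$ in $\Int T_i$ so that $\triangle P_1P_iQ$, $\triangle P_iP_{i+1}Q$, $\triangle P_1P_{i+1}Q$ contain exactly $u:=C_{i-1}-A_{i-1}$, $v:=c_i$, $w:=A_i-C_i$ of the $a_i$ points of $A$ in $T_i$. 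Here $u,w\ge 0$ by the choice of $i$, $v=c_i\ge 1$, and $u+v+w=a_i$ since $C_{i-1}+c_i=C_i$ and $A_i-A_{i-1}=a_i$. This is precisely the $n=3$ case, recorded separately as \Cref{prop_triangle_subdivision_3}: in a triangle carrying $N$ interior points in general$^+$ position, for nonnegative integers summing to $N$ there is an interior point whose three segments to the vertices cut off subtriangles with exactly those counts, and it may be taken in general$^+$ position with all given points.

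Proving that triangular statement is the crux, and I would do it by continuity. For $Q$ in the open triangle off the arrangement $\mathcal A$ of the $3N$ lines joining a triangle vertex to a point of $A$, the triple $F(Q)$ of subtriangle counts is well defined, has coordinate sum $N$, is constant on each cell of $\mathcal A$, and changes across each wall by a unit step along an edge of the lattice triangle $\{(x,y,z)\in\ZZ_{\ge 0}^3:x+y+z=N\}$ — one point of $A$ moves from one subtriangle into an adjacent one. Near the three vertices of the triangle $F$ takes the three corner values of this lattice triangle, so it suffices to show $F$ is onto. I would get this from a Sperner/KKM argument: if $(u,v,w)$ were missed, then the set where $\#\triangle P_1P_iQ\ge u+1$ avoids a neighbourhood of the side $P_1P_i$ (there that subtriangle degenerates and its count is $0$), and likewise for the other two sides, so the three ``excess'' sets form a KKM cover of the triangle; their forced common point would satisfy all three of $\#\triangle P_1P_iQ\ge u+1$, $\#\triangle P_iP_{i+1}Q\ge v+1$, $\#\triangle P_1P_{i+1}Q\ge w+1$, a total of at least $N+3$, which is absurd. (The cases where two of $u,v,w$ vanish are handled directly by placing $Q$ near a vertex.) Finally I would take $Q$ in the open cell on which $F\equiv(u,v,w)$ and off the finitely many further lines needed so that $P\cup A\cup\{Q\}$ stays in general$^+$ position. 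I expect the main obstacle to be making the surjectivity step fully rigorous at points where several lines of $\mathcal A$ concur and at the triangle's vertices — plausibly the same degeneracy noted in \Cref{remark_extend_weighted_straight_skeleton}; a more elementary backup is a double sweep: first choose a segment from $P_1$ to the opposite side that cuts off exactly $u$ points of $A$ (it passes them one at a time, as no two points of $A$ are collinear with $P_1$), then slide $Q$ across the lines through $P_{i+1}$ and the points of $A$ to shift points between the other two subtriangles while keeping the first count fixed.
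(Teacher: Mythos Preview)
Your reduction to the single triangle $T_i$ is correct and matches the paper's proof exactly: the paper also fans out from $P_1$, sorts the $A_j$ by angle from $P_1$, and picks $i$ by the same discrete intermediate-value step (your conditions $A_{i-1}\le C_{i-1}$, $A_i\ge C_i$ are precisely the paper's $m_1\le\sum_{k<i}c_k$, $m_2\le\sum_{k>i}c_k$). Both proofs then defer the remaining work to \Cref{prop_triangle_subdivision_3}.

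For that triangle lemma you take a genuinely different route. The paper replaces each $A_j$ by an $\varepsilon$-ball and defines a \emph{continuous} map $f\colon T_i\to\Delta^2$ via area fractions of the balls in the three subtriangles; it checks that $f$ restricted to $\partial T_i$ wraps once around $\partial\Delta^2$ and deduces surjectivity by a Brouwer-type argument, then uses general$^+$ position to show that for small $\varepsilon$ no ball is cut by all three segments through $Q$, so the continuous hit at $(u,v,w)$ yields a discrete one. Your KKM argument is the combinatorial counterpart of this and is valid. Your worry about concurrent lines of $\mathcal A$ is exactly where general$^+$ position earns its keep: through any interior point at most one line $V_\alpha A_k$ from each vertex can pass (general position), and three such lines $V_1A_k,V_2A_l,V_3A_m$ with $k,l,m$ distinct are forbidden (general$^+$), so at most three lines of $\mathcal A$ meet, and that only at the points $A_k$ themselves. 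A short case check then gives $\sum_j\max_{\text{adjacent cells}}(\text{count}_j)\le N+2$ at every point, so a common point of the three closed ``excess'' sets would force $N+3\le N+2$ --- exactly the discrete analogue of the paper's ``no ball cut by all three segments'' step. One caution: your backup double sweep is not right as stated --- sliding $Q$ along a ray from $P_1$ does not keep the first count fixed (the segment $P_iQ$ rotates and moves points between subtriangles $1$ and $2$); the paper's appendix has an old proof in that spirit, but it needs a more delicate two-point chase.
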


We call $P_1$ the \emph{pivot point}.

	Note that the fact that
	$P \cup \{ A_1, \dots, A_m, Q \}$ is in general position
	implies that
	none of $\{ A_1, \dots, A_m \}$ lies on the segments $Q P_1$, $Q P_i$ or $Q P_{i+1}$,
	so they are cleanly split into the three regions separated by these line segments.

See \Cref{fig:polygon_splitting} for an illustration.
Since the existence of a triangulation only depends on the order type, a point set in general position can be perturbed slightly to
make it in general$^+$ position, while not changing the order type.

\begin{figure}
    \centering
    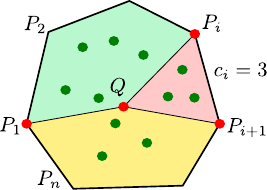
	\caption{Illustration for \Cref{triangle_subdivision}.}
    \label{fig:polygon_splitting}
\end{figure}

\begin{proof}
	For each point $A_j$ in the polygon $P$, define $\sigma(A_j)$ to be the angle $P_2 P_1 A_j$.
	This is between $0$ and $\alpha(P_n)$.
	See \Cref{fig:def_angle_of_point} for an illustration.

	\begin{figure}
		\centering
		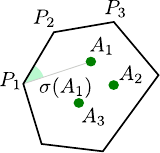
		\caption{Illustration for definition of $\sigma$.}
		\label{fig:def_angle_of_point}
	\end{figure}

	Let $A = \{ A_1, \dots, A_m \}$.
	Sort all points in $A$ in order of increasing $\sigma$.
	Let $j_1, \dots, j_m$ be such that $ \sigma(A_{j_1})< \sigma(A_{j_2})<\cdots< \sigma(A_{j_m})$.
	Note that there cannot be any two different points with equal $\sigma$
	because no three points are collinear.

	Since $\sum_{i=1}^n c_i = m$, we can partition
	$(A_{j_1},A_{j_2},\dots,A_{j_m})$ into contiguous chunks of size $c_i$,
	let $\lambda_i$ and $\rho_i$ be the angle of the leftmost and rightmost point in $i$-th chunk.

	Then $\lambda_i \leq \rho_i$ for each $i$,
	and $\lambda_i = \rho_i \iff c_i = 1$.

	Formally, for $1 \leq i \leq n$,
	\begin{gather*}
		\lambda_i = \sigma\big(A_{j_{(\sum_{k=1}^{i-1} c_i)+1}}\big), \\
		\rho_i = \sigma\big(A_{j_{(\sum_{k=1}^{i} c_i)}}    \big).
	\end{gather*}

	We have $\lambda_2>\sigma(P_2)$ and $\lambda_n \leq \sigma(P_n)$,
	therefore there exists index $i$ such that $\lambda_i>\sigma(P_i)$ and $\lambda_{i+1}\leq\sigma(P_{i+1})$.
	Since no three points are collinear, $\lambda_{i+1}<\sigma(P_{i+1})$,
	so $\rho_i<\sigma(P_{i+1})$ as well.

	Pick this value of $i$.
	Let $m_1$ be the number of points within $A$
	inside polygon $P_1 P_2 \dots P_i$,
	and $m_2$ be the number of points within $A$
	inside polygon $P_1 P_{i+1} P_{i+2} \dots P_n$.

	By the choice of $i$,
	$m_1 \leq \sum_{k=1}^{i-1} c_k$ and $m_2 \leq \sum_{k=i+1}^{n} c_k$.
	Therefore, it suffices if we can find point $Q$ inside triangle $P_1 P_i P_{i+1}$
	such that:
	\begin{itemize}
		\item number of points in $A$ inside triangle $P_1 P_i Q$ is $\sum_{k=1}^{i-1} c_k - m_1$,
		\item number of points in $A$ inside triangle $P_1 Q P_{i+1}$ is $\sum_{k=i+1}^{n} c_k - m_2$,
		\item number of points in $A$ inside triangle $Q P_i P_{i+1}$ is $c_i$.
	\end{itemize}
	The existence of such a point $Q$ will be proven in \Cref{prop_triangle_subdivision_3}.
\end{proof}

The rest of the proof follows.
We also see that this is a special case of
\Cref{triangle_subdivision} when $n = 3$, but slightly more general
because we allow for some $c_i$ being zero,
which is why the statement are almost the same.

\begin{proposition}
	\label{prop_triangle_subdivision_3}
	Given a convex polygon $P = P_1 P_2 \dots P_n$,
	points $\{A_1, \dots, A_m \}$ inside polygon $P$,
	such that $P \cup \{ A_1, \dots, A_m \}$ is in general$^+$ position.
	Let $c_t$ and $c_b$ be nonnegative integers, $c_i$ be a positive integer,
	such that $c_t + c_i + c_b = m$.
	Then there exists a point $Q$ inside triangle $P$ such that within the $m$ points
	$\{ A_1, \dots, A_m \}$,
	there are exactly $c_t$ points inside triangle $P_1 P_i Q$,
	$c_b$ points inside triangle $P_1 Q P_{i+1}$,
	and $c_i$ points inside triangle $Q P_i P_{i+1}$.
\end{proposition}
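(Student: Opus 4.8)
The plan is to exploit the pivot vertex $P_1$, which I will rename $X$, writing also $Y=P_i$, $Z=P_{i+1}$, and $\triangle$ for the closed triangle $XYZ$; set $A=\{A_1,\dots,A_m\}\subseteq\operatorname{int}\triangle$. For $Q\in\operatorname{int}\triangle$ let $a(Q),b(Q),c(Q)$ denote the number of points of $A$ lying in the open triangles $XYQ$, $XQZ$, $QYZ$ respectively. Call $Q$ \emph{admissible} if it lies on none of the finitely many lines $XA_j$, $YA_j$, $ZA_j$; then every $A_j$ lies in exactly one of the three open triangles, so $a(Q)+b(Q)+c(Q)=m$, and $a,b,c$ are constant on each connected component of the (open, dense) admissible set. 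It therefore suffices to produce an admissible $Q$ with $(a,b,c)=(c_t,c_b,c_i)$, since then $b=m-c_t-c_i=c_b$ is automatic; as usual I may assume general$^+$ position, perturbation not changing the order type.

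The key geometric input is \emph{radial monotonicity from the pivot}: as $Q$ travels along a ray from $X$ into $\triangle$, away from $X$ and toward the opposite edge $YZ$, the triangle $QYZ$ shrinks monotonically with respect to inclusion while $XYQ$ and $XQZ$ grow monotonically with respect to inclusion (a one-line check in barycentric coordinates relative to $X,Y,Z$). Hence along each such ray $c$ is non-increasing, running from $m$ near $X$ down to $0$ near edge $YZ$, while $a$ and $b$ are non-decreasing. First consequence: since $1\le c_i\le m$, the ``band'' $\mathcal B:=\overline{\{\,Q\ \text{admissible}:c(Q)=c_i\,\}}$ meets every ray from $X$ (within the angular cone subtended by $YZ$) in a nonempty closed sub-segment; more precisely $\mathcal B$ lies between the two super-level sets $\{\,\bar c\ge c_i\,\}$ and $\{\,\bar c\ge c_i+1\,\}$, where $\bar c$ is the upper-semicontinuous count using the \emph{closed} triangle $\operatorname{conv}(Q,Y,Z)$, each of these being closed, star-shaped from $X$, and bounded by segments of the lines $YA_j,ZA_j$ — this is the structure I will use to see $\mathcal B$ is connected. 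Second consequence: on the interior of any radial sub-segment of $\mathcal B$ we have $a+b\equiv m-c_i$ constant with $a,b$ both non-decreasing, so $a$ is \emph{constant} there; combined with the observation that for rays close to edge $XY$ the wedge at $X$ contains no point of $A$ (so $a\equiv0$ on such rays), and symmetrically $a\equiv c_t+c_b$ for rays close to edge $XZ$, the value of $a$ on $\mathcal B$ ranges over all of $\{0,1,\dots,c_t+c_b\}$ — provided it cannot skip a value.

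To make ``cannot skip a value'' precise I will prove and apply the following topological lemma: if $\mathcal B$ is connected and $\Sigma\subseteq\mathcal B$ is a finite union of segments, and $g$ is an integer-valued function on $\mathcal B\setminus\Sigma$ that is locally constant and jumps by at most $1$ across $\Sigma$ (any two consecutive local components of $\mathcal B\setminus\Sigma$ around a point of $\Sigma$ take values differing by at most $1$), then $g$ attains every integer between its minimum and its maximum. The proof is the usual clopen argument: if a value $v$ strictly between the extrema were missed, the ``$<v$'' and ``$>v$'' parts — each enlarged by the points of $\Sigma$ all of whose neighbouring components lie on the same side, the ``jump $\le1$'' hypothesis ruling out any point of $\Sigma$ flanked by a ``$<v$'' and a ``$>v$'' component — form a nonempty clopen partition of $\mathcal B$, contradicting connectedness. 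I apply this with $g=a$ and $\Sigma$ the union of the wall-segments $XA_j,YA_j,ZA_j$ lying in $\mathcal B$: crossing a single such wall moves at most the one point $A_j$ between two of the three open triangles, so $a$ jumps by at most $1$ across $\Sigma$ (even at the points $A_j$, where three walls concur). Since the extreme values of $a$ on $\mathcal B$ are $0$ and $c_t+c_b$ and $0\le c_t\le c_t+c_b$, the lemma yields an admissible $Q\in\mathcal B$ with $a(Q)=c_t$, hence $c(Q)=c_i$ and $b(Q)=c_b$; if this $Q$ happened to lie on $\partial\triangle$, a nearby interior point works by local constancy.

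I expect the main obstacle to be the connectivity of $\mathcal B$: one must show that the radial-extent endpoints of $\mathcal B$ vary continuously with the ray direction, so that $\mathcal B$ is genuinely a ``topological product'' of a segment with an angular interval — equivalently, that the radial functions (from the interior point $X$) of the polygonal star-shaped super-level sets $\{\bar c\ge c_i\}$ and $\{\bar c\ge c_i+1\}$ are continuous. This is intuitively clear but needs the general-position bookkeeping around the arrangement vertices, in particular the triple points $A_j$ where the three walls $XA_j,YA_j,ZA_j$ meet. (This local delicacy at degenerate configurations is presumably exactly what obstructs the weighted straight-skeleton generalization to arbitrary $n$ mentioned in the remark; for $n=3$ the ``cut structure'' is just the single point $Q$ with its three cevians, so it stays tractable.) The remaining pieces — the barycentric monotonicity computation, the boundary values of $a$, and the topological lemma — are routine.
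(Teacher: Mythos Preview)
Your approach is sound and genuinely different from the paper's. The paper inflates each $A_j$ to an $\varepsilon$-ball and defines a continuous map $f\colon\triangle\to\Delta^2$ sending $Q$ to the normalized triple of ball-areas in the three sub-triangles; since $f$ carries $\partial\triangle$ onto $\partial\Delta^2$ with degree one, a Brouwer-type argument gives surjectivity, and general$^+$ position is invoked afterward to show that at a preimage of $(c_t,c_b,c_i)$ not all three cevians can cut a ball, so the smoothed and discrete counts agree. Your band-plus-IVT argument trades the $\varepsilon$-smoothing and the algebraic-topology input for a direct combinatorial analysis of the level set $\{c=c_i\}$; it is more elementary and more constructive, at the cost of needing the arrangement bookkeeping you flag. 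On that gap: connectivity of $\mathcal B$ falls out cleanly once you note that along the ray at angle $\theta$ the threshold $t_j(\theta)$ at which $A_j$ exits $QYZ$ is the ray's crossing with whichever of $YA_j,ZA_j$ lies on the appropriate side, and these two crossings agree at the direction $\theta_j$ of $A_j$; hence each $t_j$ is continuous in $\theta$, so $\mathcal B$ lies between two continuous order-statistic graphs $t_{(m-c_i)}$ and $t_{(m-c_i+1)}$. A pinch ($t_{(m-c_i)}=t_{(m-c_i+1)}$) occurs only where the ray hits some $YA_j\cap ZA_k$ with $j\neq k$, and a direct check shows $a$ changes by exactly one across it; general$^+$ position forbids a wall $XA_\ell$ from also passing through such a point, so your jump-by-one lemma applies throughout. (The paper's appendix in fact contains an older proof via a two-point sweep that is closer in spirit to, though still distinct from, your argument.)
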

\begin{proof}
	\newcommand\bracket[1]{\langle #1 \rangle}
	The idea is the following.
	Consider the function $\hat f \colon P \to \ZZ^3$ that maps any point inside triangle $P$ (including the boundary)
	to a triple of integers, defined by $\hat f(Q) = (\bracket{P_1 P_i Q}, \bracket{P_1 Q P_{i+1}}, \bracket{Q P_i P_{i+1}})$,
	where for a triangle $T$, $\bracket{T}$ is the number of points within $\{ A_1, \dots, A_m \}$ inside that triangle.

	Then we wish to find some point $Q$ such that $\hat f(Q) =(c_t, c_b, c_i)$.
	To do that, we find a continuous function $f$ that approximates $\hat f$,
	then apply a result from algebraic topology.

	\renewcommand\bracket[1]{[#1]}

	Let $\varepsilon>0$ be some very small positive integer, which will be selected later.
	For each point $A_j$, let $B(A_j, \varepsilon)$ be the closed ball with radius $\varepsilon$ centered at $A_j$.
	Let $V = \varepsilon^2 \pi$ be the area of such a ball.

	For a geometric object $D$ on the plane, define $|D|$ to be the area of $D$.

	Define a function $f \colon P \to \RR^3$ as follows.
	For each point $Q \in P$,
	\[ f(Q) = (\bracket{P_1 P_i Q}, \bracket{P_1 Q P_{i+1}}, \bracket{Q P_i P_{i+1}}), \]
	where for a triangle $T$, define \[ \bracket{T} = \frac{1}{m \cdot V} \sum_{j=1}^m |B(A_i, \varepsilon) \cap T|. \]

	It is clear that as $\varepsilon \to 0$, $f$ converges to $\hat f$ almost everywhere.
	This is what we mean by $f$ being an approximation of $\hat f$.

	Assume $\varepsilon$ is small enough such that for each $1 \leq j \leq m$,
	$B(A_j, \varepsilon) \subseteq P$.
	Then for each point $Q \in P$, the sum of coordinates of $f(Q)$ is exactly $m$.

	Define $\Delta^2$ to be the closed affine triangle in $\RR^3$ with
	three vertices having coordinate $(1, 0, 0)$, $(0, 1, 0)$ and $(0, 0, 1)$.
	Using the argument above, combined with the fact that each coordinate of $f$ is nonnegative,
	we have the image of $f$ is inside $\Delta^2$.
	We thus restrict the codomain of $f$ to $\Delta^2$.

	Furthermore, we have the following:
	\begin{itemize}
		\item $f(P_1)=(0, 0, m)$,
		\item for each point $Q$ on segment $P_1 P_i$, $f(Q)$ has first coordinate $0$,
		\item $f(P_i)=(0, m, 0)$,
		\item for each point $Q$ on segment $P_i P_{i+1}$, $f(Q)$ has third coordinate $0$,
		\item $f(P_{i+1})=(m, 0, 0)$,
		\item for each point $Q$ on segment $P_{i+1} P_1$, $f(Q)$ has second coordinate $0$.
	\end{itemize}
	Therefore, as $Q$ travels once around the boundary of $P$,
	$f(Q)$ travels once around the boundary of $\Delta^2$.
	In the language of algebraic topology, $f$ maps the boundary of $P$ to the boundary of $\Delta^2$,
	and the induced maps on the first homology group $H_1$ and the fundamental group $\pi_1$ are bijective.

	Therefore, $f \colon P \to \Delta^2$ is surjective,
	using an argument similar to that used in the proof of Brouwer fixed-point theorem.
	This gives us some point $Q$ such that $f(Q) = (c_t, c_b, c_i)$.

	Recall that we want to find $Q$ such that $\hat f(Q) =(c_t, c_b, c_i)$, so we are almost there.

	It remains to prove that for sufficiently small $\varepsilon$ there exists a point $Q$ such that $f(Q) = (c_t, c_b, c_i)$,
	and also $\hat f(Q) = (c_t, c_b, c_i)$.
	We see that it suffices if none of the segments $P_1 Q$, $P_i Q$ and $P_{i+1} Q$
	has any intersection with any of the balls $B(A_j, \varepsilon)$---%
	informally, none of the balls are cut by the segments that separate the three triangles.

	First, we rule out the possibility that $Q$ is inside some ball $B(A_j, \varepsilon)$.
	If this is the case, because no three points are collinear,
	for sufficiently small $\varepsilon$,
	none of the segments $P_1 Q$, $P_i Q$, or $P_{i+1} Q$
	can intersect with any other ball $B(A_k, \varepsilon)$ for $k \neq j$.
	This makes $f$ have non-integral coordinate because $B(A_j, \varepsilon)$ is split into multiple parts,
	so $f(Q) \neq (c_t, c_b, c_i)$, contradiction.

	Then we rule out the possibility that all three segments
	$P_1 Q$, $P_i Q$, and $P_{i+1} Q$
	intersect some balls
	$B(A_{j_1}, \varepsilon)$,
	$B(A_{j_2}, \varepsilon)$,
	and $B(A_{j_3}, \varepsilon)$ simultaneously.
	If all three are the same ball, that is $j_1 = j_2 = j_3$, then point $Q$ must be inside the ball,
	which is already ruled out.
	If two of them are the same ball, let's say $j_1 = j_2$,
	using the same argument as above, because no three points are collinear,
	for sufficiently small $\varepsilon$,
	there cannot be any other $B(A_{j_3}, \varepsilon)$ lying on the remaining segment.
	The remaining case is if all $j_1$, $j_2$ and $j_3$ are distinct,
	then because no three lines are concurrent, for sufficiently small $\varepsilon$ this does not happen either.

	The last case is if there are at most two segments
	$P_1 Q$, $P_i Q$, and $P_{i+1} Q$
	intersecting some ball.
	Again, because no three points are collinear,
	each segment intersects at most one ball for small enough $\varepsilon$.
	Therefore, $f(Q)$ has some nonintegral coordinate, contradiction.
\end{proof}

\begin{remark}
	Without the assumption that no three lines are concurrent,
	the conclusion of \Cref{triangle_subdivision} may not hold.
	See \Cref{fig:three_collinear_lines} for an example.

	\begin{figure}
		\centering
		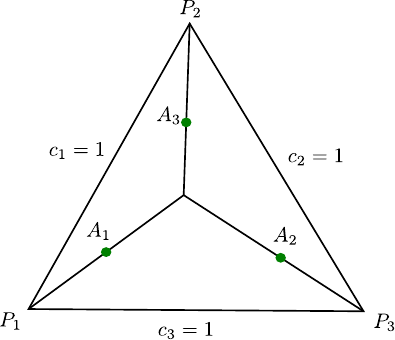
		\caption{Example where the conclusion of \Cref{triangle_subdivision} does not hold.}
		\label{fig:three_collinear_lines}
	\end{figure}

	Also note that we have some freedom in choosing the location of $Q$---%
	which allows us to enforce the general$^+$ position condition.
\end{remark}

We are close to the proof. The following proposition is a slightly weaker version,
where the values $j$ such that $c_j = 0$ are required to be consecutive.

\begin{proposition}
	\label{convex_subdivision_helper}
	Given convex polygon $P$ and point set $A$ as in \Cref{triangle_subdivision}.
	Let $c_1$, $\dots$, $c_n$ be nonnegative integers,
	such that there is some $0 \leq i \leq n$,
	$c_1 = \dots = c_i = 0$,
	$\min(c_{i+1},c_{i+2},\dots, c_n)>0$.
	Then there exists a subdivision of the area inside polygon $P$ into
	parts $R_{i+1}$, $\dots$, $R_n$
	such that each $R_j$ is convex, has $P_j P_{j+1}$ as an edge,
	has no point in $A$ on its boundary, has exactly $c_j$ points in $A$ in its interior,
	and none of the $R_j$ overlap.
\end{proposition}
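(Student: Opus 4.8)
The plan is to induct on $k$, the number of indices $j$ with $c_j>0$ (equivalently, the number of regions $R_j$ to be produced). In the base case $k=1$ we have $c_n=m$ and $c_1=\dots=c_{n-1}=0$, and we take $R_n=P$: it is convex, has $P_nP_1$ as an edge, carries all $m=c_n$ points of $A$ in its interior, and has no point of $A$ on its boundary since $A\subseteq\Int P$.

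Now assume $k\ge 2$, so $i\le n-2$. I run the angular sweep from the proof of \Cref{triangle_subdivision} with pivot $P_1$: order $A$ by $\sigma(A_j)=\angle P_2P_1A_j$, break this ordered list into consecutive chunks of sizes $c_1,\dots,c_n$ (so the first $i$ chunks are empty), let $\lambda_j$ be the angle of the $(1+c_1+\dots+c_{j-1})$-th point of $A$ (hence $\lambda_1=\dots=\lambda_{i+1}$), and let $l$ be the largest index with $\lambda_l>\sigma(P_l)$; as there, $\lambda_2>0=\sigma(P_2)$ and $\lambda_n\le\sigma(P_n)$, so $l$ exists and also $\lambda_{l+1}\le\sigma(P_{l+1})$. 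I split on whether $l\ge i+1$. If $l\ge i+1$ then $c_l>0$, and I apply \Cref{prop_triangle_subdivision_3} to the triangle $P_1P_lP_{l+1}$ exactly as in the proof of \Cref{triangle_subdivision} to get $Q$ inside that triangle splitting $P$ into three convex pieces: $L=P_1P_2\cdots P_lQ$ holding the first $l-1$ chunks, the triangle $M=P_lQP_{l+1}$ holding chunk $l$, and $R=P_1QP_{l+1}\cdots P_n$ holding the last $n-l$ chunks. The zero-count edges of $L$ are precisely its original leading-zero edges $P_1P_2,\dots,P_iP_{i+1}$ together with the two new edges $P_lQ$ and $QP_1$, which form one cyclically consecutive block; likewise the zero edges $P_1Q,QP_{l+1}$ of $R$ are consecutive. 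So, after a cyclic relabelling, $L$ and $R$ are instances of \Cref{convex_subdivision_helper} with strictly fewer active edges, and I recurse on each, setting $R_l:=M$; this yields $R_{i+1},\dots,R_{l-1}$ from $L$ and $R_{l+1},\dots,R_n$ from $R$. The lone special subcase is $l=i+1$, where $L$ holds no point of $A$; then I set $R_{i+1}:=L\cup M=P_1P_2\cdots P_{i+1}P_{i+2}Q$ (a convex polygon -- at $Q$ because $Q$ lies in the triangle $P_1P_{i+1}P_{i+2}$, at $P_1$ and $P_{i+2}$ because $Q\in\Int P$), which carries $c_{i+1}$ points and has $P_{i+1}P_{i+2}$ as an edge, and recurse only on $R$.

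If instead $l\le i$ -- possible only when $i\ge 1$ -- I peel a region off the ``corner'' $P_{i+2}$. Letting the far endpoint $Z$ of a chord through $P_{i+2}$ sweep along the polyline $P_1P_2\cdots P_{i+1}$, the piece $\rho(Z)$ of $P$ on the $P_{i+1}P_{i+2}$-side of the chord $P_{i+2}Z$ is convex, has $P_{i+1}P_{i+2}$ as its only complete active hull edge, grows monotonically from a degenerate segment to the polygon $P_1P_2\cdots P_{i+1}P_{i+2}$, and so $\#(A\cap\Int\rho(Z))$ runs through every value from $0$ up to $\#(A\cap\text{polygon }P_1\cdots P_{i+2})$. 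But $l\le i$ means $\lambda_{i+2}\le\sigma(P_{i+2})$, and since $\lambda_{i+2}$ is the angle of the $(1+c_{i+1})$-th point of $A$, while a point of $A$ has $\sigma\le\sigma(P_{i+2})$ exactly when it lies in the polygon $P_1\cdots P_{i+2}$, this gives $\#(A\cap\text{polygon }P_1\cdots P_{i+2})\ge 1+c_{i+1}>c_{i+1}$. Hence some $Z$ achieves $\#(A\cap\Int\rho(Z))=c_{i+1}$; choosing $Z$ generically in the corresponding sub-interval keeps the chord disjoint from $A$ and the configuration in general$^+$ position. I set $R_{i+1}:=\rho(Z)$; the complementary piece $P'$ is convex, carries the remaining $c_{i+2},\dots,c_n$ on its active edges $P_{i+2}P_{i+3},\dots,P_nP_1$ ($k-1$ of them), and its zero edges form a consecutive block (the inactive edges $P_1P_2,\dots,P_{t-1}P_t$, the partial edge $P_tZ$, and the cut $ZP_{i+2}$, where $Z\in P_tP_{t+1}$), so $P'$ is a smaller instance of \Cref{convex_subdivision_helper} and recursion delivers $R_{i+2},\dots,R_n$.

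In every case the recursively obtained pieces, together with $R_l$ or $R_{i+1}$, tile $P$ by convex regions with the required edges, interior counts, and point-free boundaries, and general$^+$ position is maintained at each step since each new vertex is either produced by \Cref{prop_triangle_subdivision_3} or is a generic boundary point. I expect the crux to be making the dichotomy airtight -- showing that the failure of the pivot-$P_1$ sweep to produce an \emph{active} middle region forces the corner-peel reach to be large enough, i.e.\ $\#(A\cap\text{polygon }P_1\cdots P_{i+2})>c_{i+1}$ -- and dealing with the attendant degeneracies (chunks straddling the diagonal $P_1P_{i+1}$, $Z$ nearing a vertex, the $l=i+1$ merge), alongside the routine-but-fiddly convexity check for $L\cup M$ and the monotonicity of $Z\mapsto\rho(Z)$.
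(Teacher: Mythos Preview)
Your argument is correct, but it differs from the paper's in a structurally interesting way. The paper also inducts on the number $n-i$ of active edges, but in the inductive step it manufactures an \emph{external} pivot: it extends the two boundary lines $P_1P_n$ and $P_{i+1}P_{i+2}$ to meet at a point $P_\infty$, forms the auxiliary convex polygon $P'=P_\infty P_{i+2}\cdots P_n$ (whose edges through $P_\infty$ carry no weight), and applies \Cref{triangle_subdivision} with pivot $P_\infty$. Because only active edges of $P$ appear as non-pivot edges of $P'$, the middle region produced is automatically associated to some active edge, and the induction proceeds uniformly---at the cost of a projective case split on where $P_\infty$ lands (including at infinity when the two lines are parallel).

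You instead keep the pivot at the existing vertex $P_1$. This is more elementary---no auxiliary points, no projective cases---but the sweep may then land on an inactive edge ($l\le i$), forcing your second ``corner peel'' argument. Your key observation, that $l\le i$ forces $\lambda_{i+2}\le\sigma(P_{i+2})$ and hence the polygon $P_1\cdots P_{i+2}$ already contains more than $c_{i+1}$ points, is exactly what makes the peel succeed, and it is a genuinely different idea from anything in the paper's proof. Your merge $R_{i+1}:=L\cup M$ in the $l=i+1$ subcase is also a small extra step the paper avoids by its choice of pivot. So the paper trades a second case for a projective setup; you trade the projective setup for a second case. Both work, and your route has the virtue of never leaving the original polygon.
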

Note that the remaining part $P \setminus (R_{i+1}\cup \dots \cup R_n)$ may or may not be empty.
See \Cref{fig:convex_subdivision} for an illustration.

\begin{figure}
    \centering
    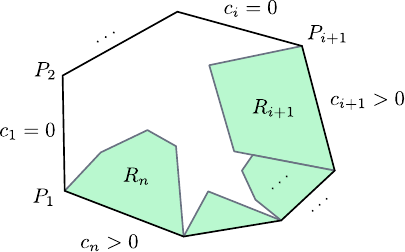
	\caption{Illustration for \Cref{convex_subdivision_helper}.}
    \label{fig:convex_subdivision}
\end{figure}

\begin{proof}
	We induct on $n-i$ i.e. number of nonzero $c_j$ values.

	The simple cases are the following.
	If $n-i=0$ the statement is obvious.
	If $n-i=1$, just let $R_n = P$.
	If $n-i=2$, sort all points counterclockwise around $P_n$,
	draw a ray from it dividing the polygon into two parts, then give one part to $R_{n-1}$
	and the other to $R_n$.

	Assume $n-i>2$. Extend the line $P_1 P_n$ and $P_{i+1}P_{i+2}$, intersecting at $P_\infty$.
	There are three cases that can happen.
	See \Cref{fig:convex_subdivision_2} for an illustration.

	\begin{figure}
		\centering
		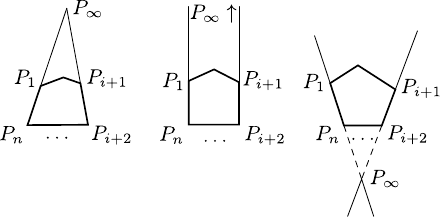
		\caption{Three possible cases for the intersection of line $P_1 P_n$ and $P_{i+1}P_{i+2}$
			in the proof of \Cref{convex_subdivision_helper}.}
		\label{fig:convex_subdivision_2}
	\end{figure}

	These three cases are in fact the same after a projective transformation.
	We will handle the first case, the others are very similar.

	Let polygon $P' = P_\infty P_{i+2} P_{i+3} \dots P_{n-1} P_n$. Then $P'$ is convex.
	Applying \Cref{triangle_subdivision} with the pivot point $P_\infty$
	and corresponding $c_j$ values, we get a division of $P'$ into three parts.
	Label the three parts $\alpha$, $\beta$ and $\gamma$ respectively
	and let $j$ be the edge being chosen for part $\beta$,
	as in \Cref{fig:convex_subdivision_application_step}.

	\begin{figure}
		\centering
		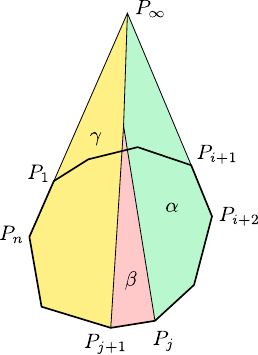
		\caption{Illustration for proof of \Cref{convex_subdivision_helper}.}
		\label{fig:convex_subdivision_application_step}
	\end{figure}

	Then let $R_j = \beta \cap P$,
	and recursively apply the induction hypothesis
	on $\alpha \cap P$ and $\gamma \cap P$,
	which has strictly less number of nonzero $c_j$ values.
\end{proof}

\begin{remark}
	We may not be able to make all the parts triangles.
	See \Cref{fig:counterexample_2} for an example where that cannot be done.
	Because the points $\{A_1, \dots, A_4 \}$ are sufficiently close to the top edge,
	any triangle that contains one of the two segments below and at least two points
	cannot be contained inside the pentagon.

	\begin{figure}
		\centering
		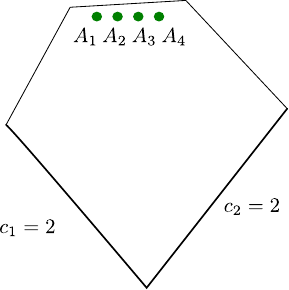
		\caption{Counterexample for \Cref{convex_subdivision} when one assumption is dropped.}
		\label{fig:counterexample_2}
	\end{figure}

	Also we note that the proof actually implies that the union of all $R_i$ equals the whole polygon $P$.
	But this property will be unnecessary from now on.
\end{remark}

Finally, \Cref{convex_subdivision} can be proved as follows.
If $c_j > 0$ for all $1 \leq j \leq n$,
apply \Cref{convex_subdivision_helper} with parameter $i=0$.

In the other case, for each $j$ such that $c_j = 0$,
set $c_j = 1$ and add a point to the set $A$ very near the midpoint of the edge.
It can be proven that if the point added is sufficiently close to the edge $P_j P_{i+1}$,
it must be the only point in $R_j$.
Now that all $c_j > 0$, use the discussion above to find a partition,
and discard the $R_j$ regions corresponding to the edges with $c_j = 0$.

Note that, instead of adding a point to $A$ corresponding to each $j$ such that $c_j = 0$,
we can also extend the two edges $P_{j-1} P_j$ and $P_{j+1} P_{j+2}$ similar to the proof of \Cref{convex_subdivision},
which likely also works,
but then there are more special cases to be handled.

\section{Another Divide-and-conquer Helper Tool}

In this section, we will prove another theorem that also divides a convex polygon into
convex regions, but the regions satisfy a different condition.

\begin{theorem}
	\label{subdivide_2}
	Let $P = P_1 P_2 \dots P_n$ be a convex polygon with no flat angle,
	that is each interior angle is strictly less than $180^\circ$.
	Given additional points $Q_1$, $\dots$, $Q_{n-1}$ satisfying the following:
	\begin{itemize}
		\item Points $Q_i$ are all outside the polygon.
		\item Triangles $P_i Q_i P_{i+1}$ do not intersect pairwise.
		\item All points $Q_i$ are on the same side of line $P_1 P_n$ as the polygon $P$.
	\end{itemize}
	Then we can subdivide the polygon $P$ into a disjoint union of convex possibly-empty polygons
	$P = R_1 \cup \dots \cup R_{n-1}$,
	such that for each $i$, all points inside $R_i$ are visible
	to $Q_i$ through segment $P_i P_{i+1}$.
	(Equivalently, the union of $R_i$ and triangle $P_i Q_i P_{i+1}$ is convex.
	This implies $R_i$ has $P_i P_{i+1}$ as an edge.)
\end{theorem}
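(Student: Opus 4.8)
My plan is to recast the conclusion as a statement about chords and then prove it by induction on $n$. Throughout I assume, harmlessly, that each $Q_i$ lies on the outer side of the line through $P_iP_{i+1}$ (otherwise no point of $P$ is visible to $Q_i$ through that edge and one simply takes that cell empty). Write $e_i=P_iP_{i+1}$ and let $V_i=P\cap W_i$, where $W_i$ is the wedge with apex $Q_i$ bounded by the lines $Q_iP_i$ and $Q_iP_{i+1}$; thus $V_i$ is the part of $P$ visible to $Q_i$ through $e_i$, and it is convex, being an intersection of convex sets. Then ``$R_i\cup\triangle P_iQ_iP_{i+1}$ convex'' is equivalent to ``$R_i$ convex, having $e_i$ as an edge, and $R_i\subseteq V_i$''. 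Moreover, since consecutive cells $R_i,R_{i+1}$ are convex, share the vertex $P_{i+1}$, and tile $P$, their common boundary is a chord of $P$ issuing from $P_{i+1}$; for both containments to hold, this chord must lie in the cone at $P_{i+1}$ bounded by the extension of $Q_iP_{i+1}$ past $P_{i+1}$ and the extension of $Q_{i+1}P_{i+1}$ past $P_{i+1}$. So it suffices to choose pairwise non-crossing chords $d_2,\dots,d_{n-1}$ (with $d_j$ issuing from $P_j$ into this admissible cone), which cut $P$ into $n-1$ cells $R_1,\dots,R_{n-1}$ with $R_i$ bordered by $e_i$ (the special edge $P_nP_1$ receives no chord and is absorbed by the cells at $P_1$ and $P_n$). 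That this cone is non-degenerate and opens into $P$ is precisely the pairwise-disjointness of $\triangle P_{j-1}Q_{j-1}P_j$ and $\triangle P_jQ_jP_{j+1}$, which meet only at $P_j$ and so occupy separated wedges there.

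The induction is on $n$, the small cases being immediate. For $n\ge 3$ the step peels off an end cell: consider either the cut $d=P_2P_n$, which splits $P$ into $\triangle P_1P_2P_n$ (the cell $R_1$) and the $(n-1)$-gon $P'=P_2\cdots P_n$ with new special edge $P_2P_n$ and remaining points $Q_2,\dots,Q_{n-1}$, or symmetrically the cut $d=P_{n-1}P_1$, with $\triangle P_{n-1}P_nP_1$ as $R_{n-1}$. The cut-off triangle automatically lands in the right visibility region: since $Q_1$ lies on $P$'s side of line $P_1P_n$ and on the outer side of line $e_1$, the direction from $P_1$ to $Q_1$ is pinned strictly outside the interior angle of $P$ at $P_1$ and its vertical angle, so line $Q_1P_1$ does not separate $P_2$ from $P_n$; hence all of $P$—in particular $\triangle P_1P_2P_n$—lies on the $V_1$-side of line $Q_1P_1$, and by convexity $\triangle P_1P_2P_n\subseteq V_1$; symmetrically $\triangle P_{n-1}P_nP_1\subseteq V_{n-1}$. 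The first two hypotheses for $P'$ are inherited since $P'\subseteq P$, so the induction hypothesis applies once the third hypothesis is verified for $P'$.

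The crux—and the place where all three hypotheses must be used together—is to show that at least one of the two end cuts (or, in a degenerate configuration, a cut collapsing $R_1$ or $R_{n-1}$ to the empty set) keeps the third hypothesis valid for $P'$: namely, that every remaining $Q_i$ lies on $P'$'s side of the new special edge. I expect this to reduce to an angular comparison at the two vertices neighbouring the special edge, in which the original same-side condition bounds the directions $P_1Q_1$ and $P_nQ_{n-1}$, the disjointness of consecutive triangles carries that control one edge further in, and at least one of the two cuts avoids the bad range; the genuinely degenerate configurations (a $Q_i$ lying on a critical line, an end cell forced empty) will need extra care, analogous to but heavier than the case split in the proof of \Cref{convex_subdivision_helper}. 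To lighten the bookkeeping I would first normalize by the projective transformation sending line $P_1P_n$ to infinity—legitimate because every point, triangle and wedge in play lies on one side of that line, so convexity and the ``visible through an edge'' relation are preserved—turning $P$ into an unbounded convex region bounded by the chain $P_2\cdots P_{n-1}$ together with two parallel rays, with each $Q_i$ sitting above its edge $e_i$ and the admissible chords becoming essentially parallel rays, so that the construction reduces to a single sweep along the chain.
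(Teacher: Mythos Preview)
Your argument has a genuine gap in the inductive step, and it is not in the ``crux'' you flagged but earlier, in the sentence ``hence all of $P$---in particular $\triangle P_1P_2P_n$---lies on the $V_1$-side of line $Q_1P_1$, and by convexity $\triangle P_1P_2P_n\subseteq V_1$''. The wedge $W_1$ is bounded by \emph{two} lines, $Q_1P_1$ and $Q_1P_2$, and you have only controlled the first. Nothing in the hypotheses prevents $P_n$ from lying on the wrong side of line $Q_1P_2$: take the rectangle $P_1=(0,0)$, $P_2=(0,1)$, $P_3=(10,1)$, $P_4=(10,0)$ with $Q_1=(-0.1,5)$, $Q_2=(5,2)$, $Q_3=(10.1,5)$. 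All three hypotheses of the theorem hold (the three exterior triangles are pairwise disjoint and all $Q_i$ lie above the $x$-axis), yet the segment $P_4Q_1$ meets the line $x=0$ at height $\approx 4.95$, well above $e_1$, so $P_4\notin V_1$ and $\triangle P_1P_2P_4\not\subseteq V_1$. By the left--right symmetry of this configuration, the other peel $\triangle P_3P_4P_1\subseteq V_3$ fails for the same reason. So neither end cut is admissible, and your ``or symmetrically'' escape hatch closes. (In this example a valid subdivision does exist, with $R_1$ and $R_3$ the tiny triangles $V_1$ and $V_3$ and $R_2$ the trapezoid in between---but producing it requires choosing $d_2$ in the very thin admissible cone at $P_2$, not the diagonal $P_2P_4$.) The same obstruction also blocks the fallback of collapsing $R_1$ or $R_3$ to the empty set: with $R_1=\varnothing$ one would need $R_2$ to absorb the corner at $P_1$, forcing the chord $d_3=P_3P_1$, but then $R_3=\triangle P_1P_3P_4$ fails the convexity test at $P_3$ against $Q_3$.

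The underlying issue is that ``peel a full vertex triangle from one end'' is too coarse a move; the correct first chord at $P_2$ may have to be chosen very close to $e_1$ when $Q_1$ is far from $e_1$, and there is no reason for it to land on a vertex of $P$. The paper avoids induction entirely: it fixes at each interior vertex $P_i$ a ray $d_i$ inside the admissible cone, reads off from its angle a weight $w_i>0$ for edge $e_i$, and defines $R_i$ as the set of points whose weighted distance to line $P_iP_{i+1}$ is minimal among all edges. Each $R_i$ is then an intersection of half-planes (hence convex), and the choice of the rays $d_i$ guarantees that near $P_i$ the cell boundary agrees with $d_i$, which is exactly what is needed for $R_i\cup\triangle P_iQ_iP_{i+1}$ to be convex. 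This weighted-distance (equivalently, weighted straight skeleton) construction handles the thin-cone situation automatically, because a large weight discrepancy simply makes $R_1$ correspondingly thin.
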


See \Cref{fig:subdivide_2} for an illustration when $n = 5$.
There cannot be any point $Q_i$ in the red region.
\begin{figure}
	\centering
	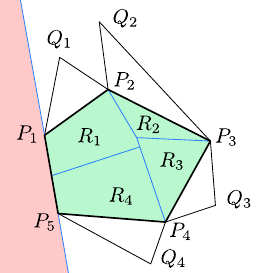
	\caption{Illustration for \Cref{subdivide_2}.}
	\label{fig:subdivide_2}
\end{figure}

\begin{remark}
	The naive algorithm---picking the largest region possible for $R_1$, then $R_2$, etc.
	does not work.  See \Cref{fig:subdivide_hard_case} for an illustration,
	if $R_1$ is chosen to be the largest region possible, then the red region cannot be covered.

	\begin{figure}
		\centering
		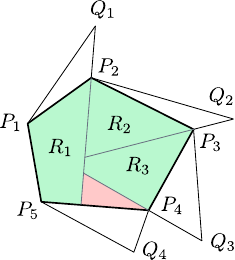
		\caption{Illustration for a case where the naive algorithm for \Cref{subdivide_2} does not work.}
		\label{fig:subdivide_hard_case}
	\end{figure}

	The assumption that there is a line $P_1 P_n$ used as a boundary is indispensable---%
	the theorem does not work when there are $n$ triangles, one outside each edge.
See \Cref{fig:illustration_counterexample_n_triangles_outside} for an illustration.

\begin{figure}
    \centering
    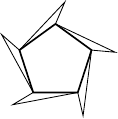
	\caption{Counterexample for \Cref{subdivide_2} when there are $n$ instead of $n-1$ triangles outside.}
    \label{fig:illustration_counterexample_n_triangles_outside}
\end{figure}
\end{remark}

\begin{remark}
	\label{remark_weighted_straight_skeleton}
Our approach is pretty much equivalent to computing the weighted straight skeleton
such as in \cite{Eder2018, Eppstein1998}, with the values of $w_i$ defined below.
However, we are unable to find a proof in the literature
that the weighted straight skeleton gives a convex tessellation.
\end{remark}

\subsection{Proof}

For each $1 \leq i \leq n$, we define a ray $P_i d_i$ as follows.
The idea is that points close to $P_i$ to one side of $d_i$ are visible to $Q_{i-1}$,
points close to $P_i$ close to the other side of $d_i$ are visible to $Q_i$.

See \Cref{fig:splitting_ray} for an illustration.

\begin{figure}
    \centering
    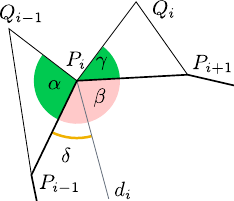
    \caption{Illustration of ray $d_i$ and the angles.}
    \label{fig:splitting_ray}
\end{figure}

For $i = 1$, ray $P_1 d_1$ is ray $P_1 P_n$.
For $i = n$, ray $P_n d_n$ is ray $P_n P_1$.
Otherwise, we perform the following procedure.

Let $\alpha$ be angle $\angle Q_{i-1} P_i P_{i-1}$,
$\beta$ be $\angle P_{i-1} P_i P_{i-1}$,
$\gamma$ be $\angle P_{i+1} P_i Q_i$.

Because triangle $P_{i-1} Q_{i-1} P_i$ and
$P_i Q_i P_{i+1}$ has no overlap,
$\alpha+\beta+\gamma \leq 360^\circ$.
Also we have $\max(\alpha, \beta, \gamma)<180^\circ$.

We want to find $0<\delta<\beta$ (which is the angle between $P_i P_{i-1}$ and $P_i d_i$)
such that $\max(\alpha + \delta, \gamma+\beta-\delta) < 180^\circ$.

To do that, picking any $\delta$ such that $\max(0, \gamma+\beta-180^\circ) < \delta < \min(\beta, 180^\circ-\alpha)$ suffices.
The assumptions easily implies the left side is less than the right side.

After we have constructed $d_i$, define $r_i = \frac{\sin(\beta-\delta)}{\sin \delta}$.
The meaning of this quantity is the following:
define $d_i(A)$ to be the distance from point $A$ to line $P_i P_{i+1}$,
for all point $A$ in ray $P_i d_i$,
then $r_i = \frac{d_i(A)}{d_{i-1}(A)}$.

For each $1 \leq i \leq n-1$, define
$w_i = \prod_{j=1}^{i-1} r_i$. In particular $w_1 = 1$.
Define $d'_i(A) = \frac{d_i(A)}{w_i}$,
then for all point $A$ in ray $P_i d_i$, $d'_i(A) = d'_{i-1}(A)$.

For each $1 \leq i \leq n-1$, define
\[
	R_i = \{ A \in P \mid
		d'_i(A) < d'_j(A) \text{ for all $1 \leq j \leq n-1$, $j \neq i$}
	\}.
\]

\begin{lemma}
	The closure of $R_i$ is a convex polygon.
\end{lemma}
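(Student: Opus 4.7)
The plan is to observe that $R_i$ is the intersection of the convex polygon $P$ with finitely many open half-planes, so that its closure is a convex polygon cut out by the corresponding closed half-planes.

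First I would verify that each function $d_j(\cdot)$ is the restriction to $P$ of an affine function. The polygon $P$, being convex, lies entirely on one side of the line through each edge $P_j P_{j+1}$, so on $P$ the unsigned distance $d_j$ coincides (up to sign) with the signed distance, which is affine in the planar coordinates. Dividing by the positive constant $w_j$ preserves affineness, so each $d'_j$ is affine on $P$.

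Consequently, for every $j \neq i$ the condition $d'_i(A) < d'_j(A)$ becomes a strict affine inequality in $A$; its solution set is either an open half-plane, all of $\RR^2$, or empty, and in every case convex. Taking the intersection with $P$ over all such $j$, it follows that $R_i$ itself is convex. Set $C_i := P \cap \bigcap_{j \neq i} \{A : d'_i(A) \leq d'_j(A)\}$; this is the intersection of a convex polygon with finitely many closed half-planes, hence a closed bounded convex polygon. One then has $\overline{R_i} \subseteq C_i$, with equality in the generic case where $R_i$ has nonempty interior (by the standard fact that $C_i$ equals the closure of its interior whenever that interior is nonempty). In degenerate cases $\overline{R_i}$ is a face of $C_i$ and is still a convex polygon (possibly empty, a single point, or a segment).

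The main step --- and the only mildly subtle one --- is the affineness of $d_j$ on $P$, which relies crucially on $P$ lying on one side of each of its supporting lines; once that is granted, the lemma reduces to standard convex geometry.
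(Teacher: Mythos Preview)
Your argument is correct and follows essentially the same approach as the paper: both proofs observe that each constraint $d'_i(A) < d'_j(A)$ defines a half-plane (since the $d'_j$ are affine on $P$), so $R_i$ is the intersection of the convex polygon $P$ with $n-2$ half-planes. Your version is more careful than the paper's in that you explicitly justify why $d_j$ is affine on $P$ and you address the passage from $R_i$ to its closure, whereas the paper's proof simply asserts the half-plane decomposition and stops at convexity of $R_i$.
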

\begin{proof}
	For each $1 \leq i < j \leq n-1$,
	there is an unique line $l_{i,j}$ that divides
	the points in $P$ into two parts,
	one has $d'_i(A) < d'_j(A)$,
	the other is the opposite, $d'_i(A) > d'_j(A)$.

	As such, $R_i$ is the intersection of $P$ and $n-2$ half planes,
	so it is convex.
\end{proof}

By our careful definition of $r_i$ and $w_i$, we have the following:
\begin{lemma}
	\label{prop_seed_for_ri}
	There is a small region around edge $P_i P_{i+1}$, and inside angle $\angle d_{i-1} P_{i-1} P_i$
	and angle $\angle P_{i-1} P_i d_i$ that all belongs to $R_i$.
	Furthermore, small parts near vertex $P_i$
	in angle $\angle P_{i-1} P_i d_i$
	or angle $\angle d_{i+1} P_{i+1} P_{i+2}$ does not belong to $R_i$.
\end{lemma}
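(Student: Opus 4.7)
The plan is to verify the defining inequality $d'_i(A) < d'_j(A)$ for every $j \neq i$ directly, splitting the indices into two types and treating each on its own.

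For $j \notin \{i-1, i, i+1\}$, the edge $P_j P_{j+1}$ shares no endpoint with $P_i P_{i+1}$, so $d_j$ is bounded below by a positive constant on a neighbourhood of the closed segment $P_i P_{i+1}$, while $d_i(A) \to 0$ as $A$ approaches this segment. After dividing by the fixed weights $w_j, w_i$ we still get $d'_i(A) < d'_j(A)$ throughout a sufficiently thin strip along the edge, so these indices contribute no constraint beyond ``thin enough''.

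The interesting comparison is with $j = i-1$; the case $j = i+1$ is handled symmetrically at $P_{i+1}$. Both $d_i$ and $d_{i-1}$ are linear functions of $A$ that vanish at $P_i$, so their ratio is constant along each ray from $P_i$. Parametrising such a ray by the angle $\theta$ it makes with $P_i P_{i+1}$ inside the polygon, and writing $\beta = \angle P_{i-1} P_i P_{i+1}$, one gets
\[
	\frac{d_i(A)}{d_{i-1}(A)} \;=\; \frac{\sin\theta}{\sin(\beta - \theta)},
\]
which is strictly increasing on $\theta \in (0, \beta)$ because its derivative is $\sin\beta / \sin^2(\beta-\theta) > 0$. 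By construction the ray $P_i d_i$ lies at $\theta = \beta - \delta$, where this ratio equals $r_i = w_i/w_{i-1}$; in other words this is exactly the direction along which $d'_i = d'_{i-1}$. Strict monotonicity then yields $d'_i(A) < d'_{i-1}(A)$ strictly inside the wedge $\angle d_i P_i P_{i+1}$ and the reverse inequality strictly inside $\angle P_{i-1} P_i d_i$.

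Combining the two observations, a thin enough strip along $P_i P_{i+1}$, trimmed at each end by rays $P_i d_i$ and $P_{i+1} d_{i+1}$ on the edge's side, lies in $R_i$; conversely, small wedges at $P_i$ inside $\angle P_{i-1} P_i d_i$ or at $P_{i+1}$ inside $\angle d_{i+1} P_{i+1} P_{i+2}$ already violate one of the nearest-neighbour inequalities and so lie outside $R_i$. The main subtlety I expect is the two boundary indices $i \in \{1, n-1\}$, for which one of the rays $d_{i-1}, d_{i+1}$ degenerates into the side $P_1 P_n$: here the linear-ratio argument still applies along the boundary ray by the careful definition of $w_i$, and the ``wrong-side'' wedge simply lies outside $P$, rendering the corresponding half of the claim vacuous.
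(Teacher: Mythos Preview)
Your argument is correct and is precisely a fleshed-out version of the paper's one-line proof: the paper simply asserts that far-away edges cannot interfere because all $w_j$ are nonzero, and that the adjacent edges $P_{i-1}P_i$ and $P_{i+1}P_{i+2}$ are ``well-understood'', which is exactly your two-case split with the monotone ratio $\sin\theta/\sin(\beta-\theta)$ pinning down the dividing ray $P_i d_i$. Your handling of the boundary indices $i\in\{1,n-1\}$ is also in line with the construction.
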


See \Cref{fig:seed_for_ri} for an illustration.

\begin{figure}
    \centering
    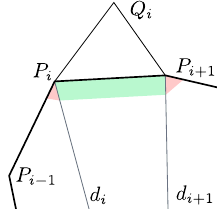
	\caption{Illustration for \Cref{prop_seed_for_ri}. The green strip belongs to $R_i$
		and the red strips do not belong to $R_i$
	if their widths are set small enough.}
    \label{fig:seed_for_ri}
\end{figure}

\begin{proof}
	Since all weights $w_i$ are nonzero, edges far apart cannot interfere with $R_i$ close to
	edge $P_i P_{i+1}$. Only edge $P_{i-1} P_i$ and $P_{i+1} P_{i+2}$ can interfere,
	and their behavior is well-understood.
\end{proof}

As such, because $R_i$ is convex, we have $R_i \cup P_i Q_i P_{i+1}$ is convex as needed.
(Note that the fact that the red strips in \Cref{fig:seed_for_ri} \emph{does not} belong to $R_i$
is critical to prove that all points in $R_i$ can be seen from $Q_i$.)

\subsection{Remark on Weighted Straight Skeleton}
\label{subsec_remark_weighted_straight_skeleton}

As mentioned in \Cref{remark_weighted_straight_skeleton}, the boundaries of $R_i$ forms a weighted straight skeleton.
As we alluded to in \Cref{remark_extend_weighted_straight_skeleton},
we may potentially generalize the argument in
\Cref{prop_triangle_subdivision_3} to work for all $n$.
Formally, we do the following.

Let $P = P_1 \dots P_n$ be a convex polygon.
For each $1 \leq i \leq n$, define $d_i (A)$ to be the distance from point $A$ to line $P_i P_{i+1}$.
For any non-negative reals $w_1, \dots, w_n$, such that not all of them are zero, define
$d'_i (A) = \frac{d_i (A)}{w_i}$; when $w_i = 0$, define $d'_i (A) = +\infty$.
Then for each $1 \leq i \leq n$, define
\begin{equation}
	\label{equation_region_of_weighted_skeleton}
	R_i = \{ A \in P \mid d'_i (A) < d'_j (A) \text{ for all }1 \leq j \leq n, j \neq i \}.
\end{equation}
This is very similar to the definition above, except that we have a region for each of the $n$ edges,
instead of just $n-1$ edges.
Note that if each of $w_i$ is scaled up by a constant, $R_i$ remains unchanged,
therefore without loss of generality, we may assume $\sum_{i=1}^n w_i = 1$.

Now we proceed similar to the proof of \Cref{prop_triangle_subdivision_3}.
Pick a small $\varepsilon>0$.
Define $\Delta^{n-1} = \{ (x_1, \dots, x_n) \in \RR_{\geq 0}^n \mid \sum_{i=1}^n x_i = 1 \}$.
We define a continuous function $f \colon \Delta^{n-1}\to \Delta^{n-1}$ by:
the $i$-th coordinate of $f(w_1, \dots, w_n)$ is
$\frac{1}{m \cdot V} \sum_{i=1}^m |B(A_i, \varepsilon) \cap R_i|$,
where the region $R_i$ are defined 
in \Cref{equation_region_of_weighted_skeleton}
using $w_1, \dots, w_n$ as the weights.
Then again, $f$ maps boundaries to boundaries, thus it is surjective.

Informally, what we have done is the following:
inflate each point $A_j$ to a ball of radius $\varepsilon \geq 0$,
construct the weighted straight skeletons at weights $(w_1, \dots, w_n)$
to divide the polygon $P$ into regions $\{ R_i\}_{i \in \{ 1, \dots, n \}}$,
then count the fraction of the areas of the balls in each region.
As the function $f$ is surjective, there exists a choice of weights
such that region $R_i$ has a fraction $\frac{c_i}{m}$ of the area.

Unfortunately, we are not able to prove that if the points
$P_i$ and $A_j$ are in general$^+$ position, then for sufficiently small $\varepsilon$,
none of the balls are cut.

\section{On Compatible Triangulation of Polygons}

Triangulation of polygons is a well-studied problem,
with \cite{Garey1978,Chazelle1991} giving fast algorithms to compute a triangulation,
the best known algorithm is linear-time.

\cite{Lubiw_2020,Gupta_1997,Aronov_1993} study compatible triangulations
of polygons and polygonal regions, which can be seen as a more constrained version of
point set triangulation where some edges are prescribed.

Formally, let $P = P_1 P_2 \dots P_n$ be a non-self-intersecting polygon.
A triangulation $T_P$ of polygon $P$ is a maximal set of edges all contained inside $P$.

Note that the notation in this section is slightly different from the other sections.

In our work, in order to prove the main theorem, we need a few claims on compatible triangulations of polygons.

\begin{lemma}
	\label{lemma:vertex_ordering}
	Let $P$ be a polygon.
	Assume $i$, $j$, $k$ and $l$ are distinct indices
	such that $i<j$ and diagonals $P_i P_j$ and $P_k P_l$ are both contained inside $P$.
	Then the two diagonals intersect
	if and only if exactly one of $k$ and $l$ belongs to the set $\{ i+1, i+2, \dots, j-1 \}$.
\end{lemma}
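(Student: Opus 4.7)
The plan is to reduce to the classical fact that a diagonal splits a simple polygon into two sub-polygons, and then to use that a straight segment meets a straight line in at most one point. First I would let $\ell$ be the line through $P_i$ and $P_j$ and invoke the polygonal Jordan curve theorem to decompose the closed region bounded by $P$ along the diagonal $P_i P_j$ into two sub-polygons: $P'$, whose boundary is the polygonal arc $P_i P_{i+1} \cdots P_{j-1} P_j$ closed off by the segment $P_j P_i$, and $P''$, whose boundary is $P_j P_{j+1} \cdots P_{i-1} P_i$ (indices mod $n$) closed off by $P_i P_j$. Since $k$ and $l$ are distinct from $i$ and $j$, each of $P_k$ and $P_l$ lies on the boundary of exactly one sub-polygon, namely $P'$ precisely when the index belongs to $\{ i+1, i+2, \dots, j-1 \}$ and $P''$ otherwise.

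Next I would observe that the straight segment $P_k P_l$ is contained in the closed region bounded by $P$ and meets the line $\ell$ in at most one point (two distinct straight lines cross at most once). If both $P_k$ and $P_l$ lie in the same sub-polygon, then $P_k P_l$ cannot cross $P_i P_j$, because such a crossing would move the segment into the other sub-polygon, and a straight segment that has exited through $\ell$ cannot return to the original sub-polygon without meeting $\ell$ a second time, contradicting that the far endpoint is in the original sub-polygon. If instead $P_k$ and $P_l$ lie in different sub-polygons, then any continuous path between them inside $P$ must meet the separating diagonal $P_i P_j$; in particular, the straight segment $P_k P_l$ meets $P_i P_j$, which is the desired intersection. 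Combining the two cases gives the ``if and only if'' statement, noting that $k, l \in \{i+1, \dots, j-1\}$ and $k, l \notin \{i+1, \dots, j-1\}$ are precisely the two ``same sub-polygon'' configurations.

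The step I expect to be the main obstacle is the rigorous establishment of the two sub-polygon decomposition together with the identification of which vertex indices lie in which sub-polygon; this is standard, but relies on a version of the Jordan curve theorem for simple polygonal curves. A minor degenerate case to dismiss is the possibility that $P_k$ or $P_l$ lies on the line $\ell$; this is ruled out because $P_k, P_l$ are polygon vertices distinct from $P_i, P_j$ and the diagonal $P_k P_l$ is required to lie inside $P$, so neither endpoint can sit on the open chord $P_i P_j$, and any tangential contact with $\ell$ outside that chord would put part of $P_k P_l$ outside $P$.
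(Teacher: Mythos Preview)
Your proposal is correct and takes essentially the same approach as the paper's proof: split $P$ along the diagonal $P_iP_j$ into two sub-polygons, identify which one contains each of $P_k$ and $P_l$ by index, and use that two distinct lines meet in at most one point to rule out (or force) a crossing. Your dismissal of the degenerate case where $P_k$ lies on the line $\ell$ but off the chord is not quite airtight as written (a non-convex polygon can have such a vertex with $P_kP_l$ still inside $P$), but under the paper's standing general-position assumption no three vertices are collinear, so the issue does not arise.
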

\begin{proof}
	Since diagonal $P_i P_j$ is contained inside $P$,
	it divides polygon $P$ into two polygons
	$P_1 P_2 \dots P_{i-1} P_i P_j P_{j+1} \dots P_{n-1} P_n$
	and
	$P_i P_{i+1} \dots P_{j-1} P_j$.

	If both $k$ and $l$ belongs to $\{ i+1, \dots, j-1 \}$,
	then both $P_k$ and $P_l$ belongs to the second polygon,
	$P_i P_{i+1} \dots P_{j-1} P_j$.
	Therefore the whole segment $P_k P_l$ must also belong here---%
	otherwise there would be some part of it belong to the other polygon,
	then we can find two intersections of $P_k P_l$ with the segment $P_i P_j$
	(which divides the two polygons), contradiction.

	Similar argument applies when neither of $k$ nor $l$ belongs to $\{ i+1, \dots, j-1 \}$.

	Conversely, if the two diagonals have no intersection, then the straight segment from $P_k$ to $P_l$
	does not go through $P_i P_j$, therefore $P_k$ and $P_l$ belongs to the same polygon.
\end{proof}

\begin{proposition}
	\label{prop:porting_one_triangulation}
	Let $P = P_1 \dots P_n$ and $Q = Q_1 \dots Q_n$ be polygons.
	Let $T_Q$ be a given triangulation of $Q$.
	Assume for every $(Q_i, Q_j) \in T_Q$ that is not an edge of $Q$,
	$(P_i, P_j)$ is a diagonal of $P$ (that is, the whole segment is inside $P$).
	Then the set of edges
	\[ T_P = \{ (P_i, P_j) \mid (Q_i, Q_j) \in T_Q \} \]
	is a triangulation of $P$.
\end{proposition}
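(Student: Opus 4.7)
The plan is to leverage \Cref{lemma:vertex_ordering}, which characterizes the crossing of two diagonals of a polygon by a purely combinatorial (index-based) condition. Because the edges of $T_P$ are defined from those of $T_Q$ solely through their endpoint indices, that criterion transfers verbatim from one polygon to the other. I will verify the three properties that make $T_P$ a triangulation of $P$: that it contains the boundary of $P$, that each non-boundary edge of $T_P$ lies inside $P$, and that no two edges cross. Then a counting argument will give maximality.

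First I would observe that every boundary edge $(P_i, P_{i+1})$ belongs to $T_P$, because $(Q_i, Q_{i+1})$ is a boundary edge of $Q$ and therefore lies in $T_Q$. Next, every non-boundary edge $(P_i, P_j) \in T_P$ is a diagonal of $P$ by the standing hypothesis. The crux is non-crossing: for any two edges $(P_i, P_j)$ and $(P_k, P_l)$ of $T_P$ with $i<j$ and $i,j,k,l$ all distinct, I would apply \Cref{lemma:vertex_ordering} inside $P$, which says that $P_iP_j$ and $P_kP_l$ cross if and only if exactly one of $k,l$ lies in $\{i+1,\dots,j-1\}$. Applied inside $Q$, the same criterion decides whether $Q_iQ_j$ and $Q_kQ_l$ cross; since these are edges of the triangulation $T_Q$, they do not cross, so the combinatorial criterion must fail, and hence $P_iP_j$ and $P_kP_l$ do not cross either. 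Pairs of edges sharing a common endpoint cause no difficulty.

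To finish, I would appeal to the standard fact that a triangulation of a simple $n$-gon contains exactly $n-3$ non-boundary diagonals, so $T_Q$ does, and hence so does $T_P$. A short induction on $n$ then shows that any collection of $n-3$ pairwise non-crossing diagonals of a simple $n$-gon, together with the boundary, decomposes the polygon into $n-2$ triangles: picking any one diagonal splits $P$ into two simple subpolygons with $a+2$ and $b+2$ vertices where $a+b=n-2$, and the remaining $(a-1)+(b-1)$ diagonals distribute cleanly between the two parts by non-crossing, so the induction closes. Therefore $T_P$ is a triangulation of $P$.

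The main obstacle, and essentially the only step where anything nontrivial happens, is the non-crossing check. A priori $P$ and $Q$ can be geometrically very dissimilar, so comparing line segments directly would be hopeless; \Cref{lemma:vertex_ordering} dissolves this obstacle by reducing non-crossing to a purely combinatorial condition on endpoint indices, which is shared between $T_P$ and $T_Q$ by construction.
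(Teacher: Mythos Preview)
Your proposal is correct and follows essentially the same approach as the paper: use \Cref{lemma:vertex_ordering} to transfer the non-crossing condition from $T_Q$ to $T_P$ via the purely combinatorial index criterion, then count $n-3$ diagonals to conclude maximality. The paper's proof is terser (it simply asserts that $n-3$ non-crossing diagonals form a triangulation rather than spelling out the induction), but the key idea is identical.
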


\begin{proof}
	We prove $T_P$ has no nontrivial intersection.
	This is because no two diagonals in $T_Q$ intersects nontrivially,
	and apply \Cref{lemma:vertex_ordering} we see that the two corresponding diagonals in $T_P$
	does not intersect nontrivially either.
	
	Because $T_Q$ has exactly $n-3$ segments strictly inside polygon $Q$,
	$T_P$ has exactly $n-3$ segments strictly inside polygon $P$.

	Therefore $T_P$ is a triangulation.
\end{proof}

\begin{proposition}
	\label{prop:porting_triangulation}
	Let $P = P_1 \dots P_n$ and $Q = Q_1 \dots Q_n$ be polygons.
	Assume for every $i$ and $j$ such that diagonal $Q_i Q_j$ lies completely inside the polygon $Q$,
	then $P_i P_j$ also lies inside the polygon $P$.
	Then given any triangulation $T_Q$ of $Q$, the set of edges
	\[ T_P = \{ (P_i, P_j) \mid (Q_i, Q_j) \in T_Q \} \]
	is a triangulation of $P$.
\end{proposition}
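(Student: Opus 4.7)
The plan is essentially to reduce this statement to \Cref{prop:porting_one_triangulation} by a direct verification of hypotheses. The earlier proposition requires that for each edge $(Q_i, Q_j) \in T_Q$ that is not a boundary edge of $Q$, the corresponding segment $(P_i, P_j)$ is a diagonal of $P$. The present proposition gives us a \emph{global} assumption: every diagonal of $Q$ corresponds to a diagonal of $P$. Since any non-boundary edge of $T_Q$ is by definition a diagonal of $Q$, the global assumption immediately implies the assumption needed to invoke \Cref{prop:porting_one_triangulation} for the particular triangulation $T_Q$.

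Concretely, I would proceed as follows. First, fix an arbitrary triangulation $T_Q$ of $Q$ and consider any $(Q_i, Q_j) \in T_Q$ that is not an edge of $Q$. By the definition of a triangulation of a polygon, the segment $Q_i Q_j$ lies entirely inside $Q$, i.e.\ it is a diagonal of $Q$. Applying the hypothesis of the present proposition to this $(i, j)$, we conclude that $P_i P_j$ lies inside $P$, i.e.\ $(P_i, P_j)$ is a diagonal of $P$. This verifies the hypothesis of \Cref{prop:porting_one_triangulation} verbatim.

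Second, for the boundary edges of $T_Q$, these are of the form $(Q_i, Q_{i+1})$ and correspond to $(P_i, P_{i+1})$, which are boundary edges of $P$ by the shared cyclic indexing; so they pose no issue and are correctly included in $T_P$. An invocation of \Cref{prop:porting_one_triangulation} then shows that
\[
T_P = \{ (P_i, P_j) \mid (Q_i, Q_j) \in T_Q \}
\]
is a triangulation of $P$.

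There is no real obstacle here: the content of the proposition is entirely in the global hypothesis on diagonals, and the previous proposition already does the geometric work of showing that non-crossing is preserved via \Cref{lemma:vertex_ordering} and that the edge count is correct. The present statement is thus a clean corollary, obtained by observing that the global hypothesis bundles together, uniformly in $T_Q$, the per-triangulation hypothesis required by \Cref{prop:porting_one_triangulation}.
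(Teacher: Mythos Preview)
Your proposal is correct and follows essentially the same approach as the paper: observe that every non-edge segment of $T_Q$ is a diagonal of $Q$, apply the global hypothesis to conclude the corresponding segment is a diagonal of $P$, and then invoke \Cref{prop:porting_one_triangulation}. The paper's proof is just a terser version of what you wrote.
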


In the notation of \cite{Ghosh_1997},
the precondition of \Cref{prop:porting_triangulation} is that
the mapping $(Q_i, Q_j) \mapsto (P_i, P_j)$
embeds the visibility graph of the simple polygon $Q$ into the visibility graph of the simple polygon $P$.

\begin{proof}
	From the assumption, we get that all the diagonals of $Q$ in $T_Q$ get mapped
	to a diagonal of $P$.
	Apply \Cref{prop:porting_one_triangulation}, we are done.
\end{proof}

\begin{remark}
Applying the proposition with $P$ being a convex polygon and $Q$ being an arbitrary polygon,
we recover the intuitive fact that a convex polygon has the largest number of triangulations
within the polygons with a fixed number of vertices.
\end{remark}

\section{Main Result}

\begin{theorem}
	\label{theorem:universality_double_circle}
	The double circle is universal.
\end{theorem}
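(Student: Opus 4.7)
My plan is to use \Cref{convex_subdivision} to fix the images of the interior points, use the unavoidable spanning cycle to reduce to triangulating two compatible $2n$-gons, and use the rich visibility structure of the $P$-side $2n$-gon together with \Cref{prop:porting_one_triangulation} to port any triangulation from one side to the other. Let $Q$ be any point set with $|Q| = 2n$ and $|\CH(Q)| = n$, and let $f_0\colon \CH(P) \to \CH(Q)$ be a cyclic mapping; write $Q_i := f_0(P_i)$. Apply \Cref{convex_subdivision} to the convex polygon $\CH(Q) = Q_1 Q_2 \dots Q_n$ with the $n$ interior points of $Q$ and weights $c_1 = \dots = c_n = 1$; this yields a tiling of $\CH(Q)$ by convex regions $R_1, \dots, R_n$ such that $R_i$ has $Q_i Q_{i+1}$ as an edge and contains a unique interior point of $Q$, which we label $B_i$. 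Define $f \colon P \to Q$ by $f(P_i) = Q_i$ and $f(A_i) = B_i$; this extends $f_0$.

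Any compatible triangulation with respect to $f$ must contain the unavoidable spanning cycle $P_1 A_1 P_2 A_2 \dots P_n A_n$ on the $P$-side together with the $n$ outer triangles $P_i A_i P_{i+1}$; on the $Q$-side we are then forced to include the cycle $Q_1 B_1 Q_2 B_2 \dots Q_n B_n$ and the triangles $Q_i B_i Q_{i+1}$. Since $Q_i, Q_{i+1}, B_i \in R_i$ and $R_i$ is convex, each outer triangle $Q_i B_i Q_{i+1}$ is contained in $R_i$, so disjointness of the $R_i$'s makes these $n$ triangles pairwise non-overlapping and their complement in $\CH(Q)$ a simple $2n$-gon $\mathcal Q$ with boundary $Q_1 B_1 Q_2 B_2 \dots Q_n B_n$. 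Let $\mathcal P$ denote the analogous $2n$-gon $P_1 A_1 P_2 A_2 \dots P_n A_n$ on the $P$-side. The problem is reduced to producing a triangulation of $\mathcal Q$ whose transfer via $f^{-1}$ is a valid triangulation of $\mathcal P$.

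The key geometric fact, and the step I expect to be the main obstacle, is a visibility claim for $\mathcal P$: every combinatorial diagonal of $\mathcal P$ (every pair of vertices non-adjacent in the cyclic order $P_1, A_1, P_2, A_2, \dots, P_n, A_n$) is a geometric diagonal---the entire segment lies in $\mathcal P$---with the sole exception of the pairs $\{P_i, P_{i+1}\}$, which are the hull edges of $\CH(P)$ and lie inside the outer triangle $P_i A_i P_{i+1}$. The idea is that the only reflex vertices of $\mathcal P$ are the $A_j$'s, and each of them bulges only $O(\varepsilon)$ out from the chord $P_j P_{j+1}$, so for sufficiently small $\varepsilon$ these shallow bulges cannot obstruct any chord travelling through the bulk of $\CH(P)$. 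Carrying this out requires a case analysis on endpoint types---$(P_i, P_j)$ with $|i - j| \geq 2$, $(P_i, A_j)$ with $j \notin \{i-1, i\}$, and $(A_i, A_j)$ with $j \neq i$---verifying in each case that the segment avoids the thin slivers $P_k A_k P_{k+1}$ that were removed from $\CH(P)$ to form $\mathcal P$. The regularity of $P_1 \ldots P_n$ and the sufficient smallness of $\varepsilon$ from \Cref{def_double_circle} are essential here.

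Given the visibility claim, the proof concludes quickly. Choose any triangulation $T$ of the simple polygon $\mathcal Q$, for instance via the classical polygon-triangulation algorithms. None of the pairs $\{Q_i, Q_{i+1}\}$ can appear in $T$, since $Q_i Q_{i+1}$ lies in the outer triangle $Q_i B_i Q_{i+1}$, outside $\mathcal Q$, and so is not a diagonal of $\mathcal Q$. Transferring $T$ through $f^{-1}$ yields a set of combinatorial diagonals of $\mathcal P$ that avoids every forbidden pair $\{P_i, P_{i+1}\}$, and hence consists of geometric diagonals of $\mathcal P$ by the visibility claim. Applying \Cref{prop:porting_one_triangulation} (with $\mathcal Q$ and $\mathcal P$ playing the roles of its $Q$ and $P$, and with non-crossing supplied by \Cref{lemma:vertex_ordering}, which is invariant under the cyclic-order-preserving $f$), the transferred set is a triangulation of $\mathcal P$. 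Combining the unavoidable spanning cycle, the outer triangles, and these two matched interior triangulations produces the desired compatible triangulation $(T_P, T_Q)$ of $(P, Q)$ with respect to $f$.
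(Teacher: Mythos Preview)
Your proposal is correct and follows essentially the same route as the paper: apply \Cref{convex_subdivision} with all $c_i = 1$ to place the interior points of $Q$, observe that the resulting outer triangles $Q_i B_i Q_{i+1}$ are disjoint (since each lies in its convex region $R_i$), triangulate the inner $2n$-gon $\mathcal Q$ arbitrarily, and port the triangulation to the double-circle polygon $\mathcal P$. The only difference is cosmetic: the paper invokes \Cref{prop:porting_triangulation} directly, implicitly taking your visibility claim for granted as its hypothesis, whereas you make the claim explicit, sketch its case analysis, and then feed it into \Cref{prop:porting_one_triangulation}; both amount to the same argument.
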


\begin{proof}
	Let $\{ P_1, \dots, P_n, A_1, \dots, A_n \}$ be a double circle,
	and $Q = \{ Q_1, \dots, Q_n, \allowbreak B_1, \dots, B_n \}$ be an arbitrary set of points
	with $\{ Q_1, \dots, Q_n \}$ on the convex hull and the rest in the interior.
	(The notation is the same as in \Cref{sec:divide_and_conquer}.)
	For convenience, assume the prescribed mapping $f_0$ maps $P_i$ to $Q_i$ for each $i$.

	Then, we apply \Cref{convex_subdivision} on $Q$
	with $c_1 = \dots = c_n = 1$,
	getting parts $R_1$, $\dots$, $R_n$.

	For convenience, we relabel the points $B_i$ such that $B_i$ is in part $R_i$.
	Then define the function $f$ extending $f_0$ by $f(A_i) = B_i$.

	Because each part $R_i$ is convex, each triangle $Q_i B_i Q_{i+1}$ is contained in $R_i$,
	so none of the triangles $Q_i B_i Q_{i+1}$ overlap for different values of $i$.

	Find any triangulation of the polygon $Q_1 B_1 Q_2 B_2 \dots Q_n B_n$.
	Then port the triangulation to $P_1 A_1 P_2 A_2 \dots P_n A_n$.
	It must be a valid triangulation.

	Formally, let $T_Q$ be any triangulation of $Q$ containing the edges
	$\{ Q_i Q_{i+1}, \allowbreak Q_i B_i, \allowbreak B_i Q_{i+1} \}$ for each $1 \leq i \leq n$.
	Since we have shown above that the prescribed edges have no intersection,
	such a triangulation exists.
	Then apply \Cref{prop:porting_triangulation}.

	Finally, extend the triangulation of the polygons to a triangulation of the whole point set
	by adding the edges on the convex hull.
\end{proof}

The double circle is the special case of the generalized double circle where $c_i = 1$ for all $i$.
When $c_i > 1$, it is not so easy to apply \Cref{prop:porting_triangulation} to port from a triangulation of the inner region
of $Q$ to $P$---for example, in the left panel of \Cref{fig:counterexample_no_inner_triangulation},
if the red polyline is to be mapped to the unavoidable spanning cycle of a $(2, 2, 2)$-generalized double circle,
then there is no compatible triangulation.

However, we still can prove it.
\begin{theorem}
	\label{theorem:result_general_double_circle_3}
	Let $c_1$, $\dots$, $c_n$ be nonnegative integers.
	Let $P$ be the $(c_1, \dots, c_n)$-generalized double circle.
	Then $P$ is universal.
\end{theorem}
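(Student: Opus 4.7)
The plan is to extend the proof strategy of \Cref{theorem:universality_double_circle} to the generalized case. First, I would apply \Cref{convex_subdivision} to $\CH(Q) = Q_1 \ldots Q_n$ with parameters $(c_1,\ldots,c_n)$, obtaining convex regions $R_1,\ldots,R_n$ where $R_i$ has $Q_i Q_{i+1}$ on its boundary and contains exactly $c_i$ interior points of $Q$. This forces which interior points of $Q$ will map to $A_{i,1},\ldots,A_{i,c_i}$: some ordering $B_{i,1},\ldots,B_{i,c_i}$ of the points inside $R_i$. Stringing the polylines $Q_i \to B_{i,1} \to \cdots \to B_{i,c_i} \to Q_{i+1}$ together with the hull vertices gives a simple closed cycle that bounds a simple polygon $\tilde{Q}$, analogous to the inner polygon $\tilde{P}$ of $P$, together with $n$ ``slivers'' between the polylines and the hull edges.

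As in \Cref{theorem:universality_double_circle}, the aim is to triangulate the slivers and $\tilde{Q}$ separately and port to $P$ via \Cref{prop:porting_triangulation}. The sliver portion is easy: each sliver of $P$ is a convex polygon with $c_i+2$ vertices, so any triangulation of a $Q$-sliver (which must exist since it is a simple polygon) can be ported edge-for-edge. The obstacle, and the reason the previous theorem's direct argument fails here, is that within-chain shortcut diagonals --- segments between two non-adjacent vertices of a single chain $(P_i, A_{i,1},\ldots,A_{i,c_i}, P_{i+1})$ --- are invalid diagonals of $\tilde{P}$ (they lie in the convex sliver of $P$, not in $\tilde{P}$), but the corresponding segments in $\tilde{Q}$ may be valid diagonals when the polyline through the $B$'s is non-convex. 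So I cannot simply apply \Cref{prop:porting_triangulation} to an arbitrary triangulation of $\tilde{Q}$.

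To address this, I would use \Cref{subdivide_2}. Within each $R_i$, I would apply it to a suitable convex sub-polygon (for instance, a small convex polygon lying just inside the polyline from $Q_i$ to $Q_{i+1}$) with external ``triangles'' chosen so that the $c_i$ interior points become the distinguished apexes. This produces a visibility-respecting subdivision that simultaneously orders the $B_{i,j}$ along the polyline and guarantees that any would-be within-chain shortcut diagonal in $\tilde{Q}$ actually exits $\tilde{Q}$ through the corresponding sliver. With this guarantee, every valid diagonal of $\tilde{Q}$ is either a sliver diagonal or a cross-chain diagonal, both of which correspond to valid diagonals of $\tilde{P}$. I can then triangulate $\tilde{Q}$ arbitrarily, port via \Cref{prop:porting_triangulation}, triangulate the slivers on both sides (applying \Cref{prop:porting_one_triangulation} if needed since the $P$-slivers are convex), and finally add the hull edges to obtain a full compatible triangulation.

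The main obstacle is making the application of \Cref{subdivide_2} rigorous: identifying the correct convex polygon and the correct external-triangle data inside each $R_i$ so that the hypotheses (no flat angle, non-overlapping triangles, common-half-plane condition) hold, and verifying that the visibility guarantee really eliminates all valid within-chain shortcuts in $\tilde{Q}$. The configuration alluded to before the theorem --- the $(2,2,2)$-generalized double circle case --- is a natural stress test, since a naive ordering of the $B$'s fails there even though the theorem still holds.
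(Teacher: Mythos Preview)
Your high-level plan matches the paper's---apply \Cref{convex_subdivision} to obtain the regions $R_i$, then use \Cref{subdivide_2} inside each region---but the specific role you assign to \Cref{subdivide_2} does not work, and the intermediate goal you set is in general unachievable.

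First, the goal. You aim to order the points $B_{i,1},\dots,B_{i,c_i}$ so that \emph{every} within-chain shortcut is an invalid diagonal of $\tilde Q$ (i.e.\ exits through the sliver), and then invoke \Cref{prop:porting_triangulation} on an arbitrary triangulation of $\tilde Q$. But if the $c_i$ points in $R_i$ are not in convex position, any spanning polyline through them has a reflex vertex on the inner side, and the shortcut across that reflex vertex is a genuine diagonal of $\tilde Q$ with no counterpart in $\tilde P$. So you cannot in general arrange that arbitrary triangulations port; you must construct a \emph{specific} triangulation of $\tilde Q$ that simply avoids within-chain shortcuts.

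Second, your invocation of \Cref{subdivide_2} is circular. That theorem takes as input a convex polygon together with an already-ordered list of external apexes whose triangles are pairwise disjoint; it outputs a subdivision of the polygon. You propose to use the interior points $B_{i,j}$ themselves as the apexes, but the disjoint-triangle hypothesis then presupposes exactly the ordering you are trying to find. The paper resolves both issues at once with a two-stage construction: first take only the \emph{convex hull} of the points in each $R_i$ (together with $Q_i,Q_{i+1}$) to form a preliminary convex polyline $L_i$, and triangulate the resulting inner polygon $L=L_1\cdots L_n$ arbitrarily. Now each edge $T_jT_{j+1}$ of $L_i$ carries a triangle $W_jT_jT_{j+1}$ of this triangulation on its inner side, and it is these $W_j$---vertices from \emph{other} chains, already ordered along $L_i$ with disjoint triangles---that serve as the external apexes for \Cref{subdivide_2}, applied to the convex polygon bounded by $L_i$. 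The resulting subdivision tells you, for each leftover interior point of $R_i$, which $W_j$ to fan it from; replacing $W_jT_jT_{j+1}$ by that fan inserts the leftover points into the polyline and yields a specific triangulation of the full inner polygon that uses no within-chain shortcut, and this is what gets ported to $\tilde P$.
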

\begin{proof}
	Let 
	the points of $P$
	be 
	\begin{align*}
		P & = \{ P_1, \dots, P_n, \\
		  & \qquad A_{1, 1}, \dots, A_{1, c_1}, \\
		  & \qquad A_{2, 1}, \dots, A_{2, c_2}, \\
		  & \qquad \vdots \\
		  & \qquad A_{n, 1}, \dots, A_{n, c_n} \}
	\end{align*}
	with notation as in \Cref{def_general_double_circle}.

	Given any point set $Q = \{ Q_1, \dots, Q_n, B_1, \dots, B_m \}$
	in general position such that $|P|=|Q|$, $\absCH{P}=\absCH{Q}$.

	Since the existence of a triangulation only depends on the order type of $Q$,
	we may perturb $Q$ slightly so that no three lines are concurrent.

	Then apply \Cref{convex_subdivision} to divide the area in polygon $\CH(Q)$ into
	disjoint polygons $R_1$, $\dots$, $R_n$ (whose union not necessarily cover $Q$).

	For each $i$, let polyline $L_i$ be edge $Q_i Q_{i+1}$ if $c_i = 0$,
	otherwise polyline $L_i$ is the convex hull of
	$Q \cap R_i$ (including the boundary of $R_i$, therefore $Q_i$ and $Q_{i+1}$ is included)
	minus segment $Q_i Q_{i+1}$.

	Let $L$ be the closed polyline formed by concatenating $L_1 L_2 \dots L_n$.
	Since the polygons $R_i$ are disjoint, $L$ is not self-intersecting,
	thus forms a simple polygon.

	Triangulate the region inside $L$ (this is possible because of \cite{Meisters1975}).

	Next, for each $i$ such that not all of points in $B \cap R_i$ is on the polyline $L_i$, sequentially perform the following procedure:
	\begin{itemize}
		\item Let $L_i = T_1 T_2 \dots T_p$,
			where $T_1 = Q_i$ and $T_p = Q_{i+1}$.
		\item Note that for each edge $T_i T_{i+1}$, there is a triangle outside it in the triangulation computed above.
		\item Divide polygon $T_1 T_2 \dots T_p$ into convex polygons using \Cref{subdivide_2}.
		\item For each such polygon, sort the points in the region by its angle with respect to its tip,
			then modify the polyline $L_i$ to incorporate these points on the chain.
	\end{itemize}
	See \Cref{fig:augment_polyline} for an illustration.

\begin{figure}
    \centering
    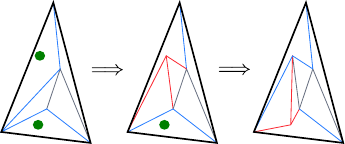
	\caption{Illustration for proof of \Cref{theorem:result_general_double_circle_3}.}
    \label{fig:augment_polyline}
\end{figure}

Finally, map the concatenation of modified chains $L_i$ to the unavoidable spanning cycle of $P$,
and triangulate the area on the border of $Q$ arbitrarily then port it to $P$.
\end{proof}

For extra clarity, the following is a more formal explanation of the algorithm.
Fix an index $i$, let $L_i = T_1 T_2 \dots T_p$ be as above.
See \Cref{fig:triangulate_inside_a_ri} for an illustration.

\begin{figure}
    \centering
    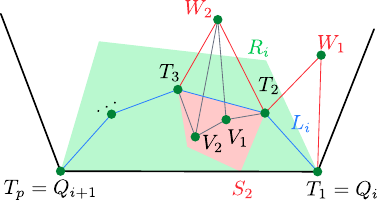
	\caption{Illustration for proof of \Cref{theorem:result_general_double_circle_3}.}
    \label{fig:triangulate_inside_a_ri}
\end{figure}

Because the region inside $L$ is already triangulated, for each $1 \leq j < p$,
we can find a triangle $W_j T_j T_{j+1}$ inside $L$ with $T_j T_{j+1}$ as a segment.

Because the triangles $W_j T_j T_{j+1}$ belong to the given triangulation, they do not intersect pairwise.
Furthermore, $W_j$ are all on the same side of line $Q_i Q_{i+1}$ as polygon $T_1 T_2 \dots T_p$,
because polygon $Q = Q_1 Q_2 \dots Q_n$ are convex
and all points $W_j$ belongs to polygon $Q$.

Therefore \Cref{subdivide_2} can be applied on polygon $T_1 T_2 \dots T_p$ and
additional points $W_1$, $W_2$, $\dots$, $W_{p-1}$.
As a result, we get $T_1 T_2 \dots T_p = S_1 \cup S_2 \cup \dots \cup S_{p-1}$
for convex regions $S_1$, $\dots$, $S_{p-1}$.

For each $1 \leq j \leq p-1$,
let the points inside $B \cap S_j$ be $\{ V_1, \dots, V_q \}$,
ordered clockwise around point $W_j$.
For example, in \Cref{fig:triangulate_inside_a_ri},
seen from point $W_2$, the counterclockwise ordering is $(T_2, V_1, V_2, T_3)$.

If $q = 0$, nothing needs to be done with this value of $j$.
Otherwise, we modify polyline $L_i$ as follows.
Previously, it has a single segment $T_j T_{j+1}$.
Cut that line away, and add the polyline $T_j V_1 V_2 \dots V_q T_{j+1}$ in its place.

We also modify the triangulation. Previously there was a triangle
$W_j T_j T_{j+1}$.
We remove that triangle, and add triangles $W_j T_j V_1$, $W_j V_1 V_2$,
$\dots$, $W_j V_{q-1} V_q$, $W_j V_q T_{j+1}$.
Because the union of region $S_j$ and triangle $W_j T_j T_{j+1}$ is convex
and the points $V_k$ are ordered counterclockwise around point $W_j$,
all of the triangles constructed above belongs to the union of region $S_j$ and triangle
$W_j T_j T_{j+1}$; and they does not intersect pairwise.

Therefore, we can perform the procedure independently for each value $1 \leq j < p$,
and at the end, the triangulation is still a valid triangulation inside the polyline
$L = L_1 L_2 \dots L_n$.

\section{Discussion}

\subsection{Important Special Cases with Projective Geometry}

We see that using projective geometry simplifies the proof of \Cref{convex_subdivision}.

There is a special case of the problem:
when the interior points are quite close to each other.
Using projective geometry, this can be seen as equivalent to having the points of the polygon being at infinity.

In trying to resolve the main conjecture, it may be useful to attempt this special case first.

In our attempts at this special case, we observe that ideas that work in this case can often be generalized to the general case.

\subsection{Matching Problem and Acyclicity of State Graph}

In the proof of \Cref{theorem:universality_double_circle}, the difficult part is to produce a non-intersecting matching of points $B_i$ and edges of $\CH(Q)$.
In some sense this is similar to a well known problem:
given a set $R$ of $n$ red points and $B$ of $n$ blue points in general position on the plane,
find a matching of the red points to the blue points such that no segments connecting the matched
pairs intersect.
As mentioned in 
the introduction of 
\cite{Aloupis2010},
the matching with minimum total length of the segments is non-crossing.\footnote{%
In fact, \cite{Aloupis2010} also studies the case of matching points to the edges
of an enclosing convex polygon in section 4, however the case studied assumes the
bijection between points and edges is prescribed (that is, the set of points $P$ and the set of edges $T$ is ordered),
thus bypassing the main difficulty of our problem. Furthermore, a matching is defined in 
\cite{Aloupis2010} as a line segment with endpoint anywhere inside each object;
while in this article, we need a triangle connecting a point and a line segment.
}

There is also the following simple algorithm to solve the red-blue point matching above:
start with any matching, then while there is an intersection let's say between segment $R_1 B_1$
and $R_2 B_2$, then change the matching to match $R_1 B_2$ and $R_2 B_1$ instead.
Since the sum of length of all segments monotonically decrease and there are finitely many matchings,
this procedure will always terminate.

In the case $c_1 = \dots = c_n = 1$,
the same heuristic algorithm to find a matching:
start with any matching from points to edges, then while there is an intersection
let's say between triangles $Q_1 B_1 Q_2$ and $Q_3 B_3 Q_4$,
swap the matching (to get triangles $Q_1 B_3 Q_2$ and $Q_3 B_1 Q_4$).

\begin{conjecture}
	The algorithm described above always terminate.
\end{conjecture}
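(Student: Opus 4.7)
The plan is to prove termination by exhibiting a real-valued potential function on the set of all matchings that strictly decreases with each valid swap. Since there are only finitely many matchings, strict monotonicity immediately implies termination and, equivalently, the acyclicity of the state graph alluded to in the section title.

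First I would try the most natural candidate, motivated directly by the red-blue segment matching argument: the sum of perimeters of all triangles, which (up to the additive constant $\sum_j |Q_j Q_{j+1}|$) equals
\[
\Phi = \sum_{\text{triangles } Q_j B_i Q_{j+1}} \bigl(|B_i Q_j| + |B_i Q_{j+1}|\bigr).
\]
The classical red-blue argument works because a swap that resolves a crossing corresponds to a single pair of crossing segments, and the crossing inequality $|XY|+|UV|>|XU|+|YV|$ applies in one shot. The analogous claim here would be: whenever two triangles $Q_1 B_1 Q_2$ and $Q_3 B_3 Q_4$ overlap, $\Phi$ strictly decreases after the swap to $Q_1 B_3 Q_2$ and $Q_3 B_1 Q_4$. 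I would treat the shared-vertex subcase (where the difference collapses to only two terms and a direct planar picture handles it) and the generic no-shared-vertex subcase separately.

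The main obstacle is that the triangle analogue does not reduce to a single-crossing inequality. A direct calculation gives
\[
\Phi_{\text{before}} - \Phi_{\text{after}} = \bigl(|B_1 Q_1| + |B_3 Q_3| - |B_1 Q_3| - |B_3 Q_1|\bigr) + \bigl(|B_1 Q_2| + |B_3 Q_4| - |B_1 Q_4| - |B_3 Q_2|\bigr),
\]
and while any triangle--triangle overlap does force at least one pair of non-base edges to cross---fixing the correct sign for one of the two bracketed groups via the crossing inequality---the other group can a priori have the wrong sign. The core of the proof will therefore be a geometric lemma asserting either that every overlap pattern forces a second, complementary crossing, or that the guaranteed crossing is always quantitatively large enough to dominate the other bracket. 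I expect this case analysis to be the main obstacle, and it is likely why the statement is only conjectured.

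If the perimeter potential is ultimately insufficient, there are several backup candidates worth trying: the sum of the triangle areas, the area of the union $|\bigcup_i T_i|$ (which I would conjecture to be monotonically non-decreasing and is bounded above by $|\CH(Q)|$), or a lexicographic signature on the sorted tuple of all apex-to-endpoint distances. As a last resort, one could attempt to argue acyclicity of the state graph directly by contradiction: assuming a closed cycle of swaps returning to the same matching, one would try to extract a contradiction from some aggregate shortening invariant that holds around the loop even when it need not hold for each individual swap.
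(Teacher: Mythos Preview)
The paper does not prove this statement: it is explicitly stated as a \emph{conjecture}, with the only supporting evidence being a computer verification for all order types with $n\le 5$ (equivalently $2n\le 10$). There is therefore no ``paper's own proof'' against which to compare your attempt.

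As for your proposal itself, you have correctly identified both the natural potential and the obstacle to making it work. Your computation
\[
\Phi_{\text{before}}-\Phi_{\text{after}}
=\bigl(|B_1Q_1|+|B_3Q_3|-|B_1Q_3|-|B_3Q_1|\bigr)
+\bigl(|B_1Q_2|+|B_3Q_4|-|B_1Q_4|-|B_3Q_2|\bigr)
\]
is right, and your diagnosis is accurate: an overlap of the two triangles forces at least one crossing among the non-base edges, which makes one bracket positive by the quadrilateral inequality, but there is no evident reason the other bracket cannot be sufficiently negative to reverse the sign. You do not supply the promised ``geometric lemma'' that would close this gap, nor do you show that any of the backup potentials (total area, area of the union, lexicographic signature) actually works; each is merely named. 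The loop-contradiction idea at the end is likewise only a sketch without a concrete invariant. In short, the proposal is an honest outline of a plausible attack, but it stops exactly at the point the paper's authors presumably also stopped, and as written it does not constitute a proof. Your own remark that ``it is likely why the statement is only conjectured'' is apt.
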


Formally: we represent a matching
where segment $Q_i Q_{i+1}$ is matched to point $B_{\pi_i}$
with a permutation $\pi = (\pi_1, \dots, \pi_n)$ of $(1, \dots, n)$.
Then, for each point set $Q$ with $|Q|=2n$ and $\absCH{Q}=n$, define the directed graph $G$ as follows:
\begin{itemize}
	\item The vertices are all $n!$ permutations of $(1, \dots, n)$;
	\item For two permutations $\pi$ and $\rho$, there is an edge from $\pi$ to $\rho$ if and only if
		there are two indices $i\neq j$, $\pi_i = \rho_j$, $\pi_j = \rho_i$, $\pi_k = \rho_k$
		for all $k \in \{ 1, \dots, n \} \setminus \{ i, j \}$;
		and triangles $Q_i B_{\pi_i} Q_{i+1}$ and $Q_j B_{\pi_j} Q_{j+1}$ intersect.
		(It can then be shown in this case then
		triangles $Q_i B_{\rho_i} Q_{i+1}$ and $Q_j B_{\rho_j} Q_{j+1}$ does not intersect.)
\end{itemize}
Then the conjecture states the graph $G$ is acyclic for all order types $Q$ in general position.

Using the datasets provided by \cite{Aichholzer_2001}, we have verified that the conjecture holds for all $n \leq 5$
(thus $2n \leq 10$). For $2n = 8$, $1468$ out of $3315$ order types have exactly half of the points on the convex hull;
and for $2n = 10$, $2628738$ out of $14309547$ order types have exactly half of the points on the convex hull.
Since a significant fraction of all order types with an even number of points (44\% and 18\% for $2n=8$ and $2n=10$ respectively) have exactly half of its points on the convex hull,
we expect brute force to be infeasible for $n \geq 7$.

\subsection{Open Problems}

Apart from the conjectures posed above and the obvious problem of proving the compatible triangulation conjecture,
a possibly more approachable step at the moment is to generalize the proof to
polygon where each side is a chain, where a chain is defined in \cite{Rutschmann_2023}.

It is probably difficult to prove the case of almost-convex polygon as defined in \cite{bacher2010},
since every order type can be represented as an almost-convex polygon.

It would also be interesting to see whether it is possible to have an algorithm that computes
the compatible triangulation in sub-quadratic time.

\printbibliography

\clearpage

\appendix

The appendix contains unpolished contents that proves some special cases and is less elegant.
There are also some attempted directions that does not work out,
of course they are made redundant by the proofs in the main sections.

\section{Triangulating the Area between \texorpdfstring{$2$}{2} Polylines}

In this section, we will describe a tool that will be useful in an old proof of \Cref{theorem:result_general_double_circle_2}.

We use the following notation in this section:
\begin{itemize}
	\item $a$, $b$ and $c$ are perfectly vertical lines,
		such that line $a$ is to the left of line $b$, and line $b$ is to the left of line $c$.
	\item $P$ is a nonempty point set between two lines $a$ and $b$ with $n$ points.
	\item $Q$ is a nonempty point set between two lines $b$ and $c$ with $m$ points.

		We allow for the case where some point of $P$ is on $a$, or some point of $P$ is on $c$.
		But no point of $P$ or $Q$ can be on $b$.
	\item $X$ is a point that is ``infinitely far'' above the figure, and $Y$ is a point that is
		``infinitely far'' below the figure.

		As such, when we say e.g. ``segment $AX$'' for a finite point $A$, we mean
		the ray starting from $A$ and travel upward. Similarly ``line $AX$'' is the line
		passing through $A$ and is perfectly vertical.

		Similarly, ``three points $A$, $B$ and $X$ are collinear'' means ``line $AX$ passes through $B$''.

	\item $P \cup Q \cup \{ X, Y \}$ are in general position.

		One implication of this is that there can be at most one point of $P$ on $a$ (if there are two,
		these two would be collinear with $X$).
\end{itemize}

Since $P$ is in the region between lines $a$ and $b$,
and $Q$ is in the region between lines $b$ and $c$,
a line passing through any $P_i \in P$ and $Q_i \in Q$
cannot be perfectly vertical,
therefore it splits the plane into two regions, which we can call ``above'' and ``below'' the line.

\begin{proposition}
	\label{prop:upper_cap_prop}
	There exists points $P_1 \in P$ and $Q_1 \in Q$ such that
	all the points in $P \cup Q$ except $P_1$ or $Q_1$
	are below the line $P_1 Q_1$. Furthermore, the choice is unique.
\end{proposition}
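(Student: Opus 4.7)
The plan is to identify the pair $(P_1, Q_1)$ as the endpoints of the unique edge of the upper convex hull of $P \cup Q$ that crosses the separating vertical line $b$.

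First I would note that the general position assumption with respect to the point $X$ at vertical infinity implies that no two points of $P \cup Q$ share the same $x$-coordinate, so the upper convex hull of $P \cup Q$ is a well-defined $x$-monotone polyline from the leftmost to the rightmost point of $P \cup Q$. Since every point of $P$ lies strictly left of $b$ and every point of $Q$ strictly right of $b$, the leftmost vertex of this polyline lies in $P$ and the rightmost in $Q$. Walking along the polyline from left to right, the vertex sequence begins in $P$ and ends in $Q$, so there is exactly one edge $e$ of the hull whose left endpoint lies in $P$ and whose right endpoint lies in $Q$. Define $P_1$ and $Q_1$ to be those two endpoints.

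For existence, I would recall that a segment $[A,B]$ with $A,B$ in a finite point set $S$ is an edge of the upper hull of $S$ if and only if every point of $S$ lies weakly below the line through $A$ and $B$. Applied to $S = P \cup Q$, every point of $P \cup Q$ lies weakly below line $P_1 Q_1$, and by the assumption that no three points of $P \cup Q \cup \{X, Y\}$ are collinear, no point other than $P_1$ or $Q_1$ can lie on the line $P_1 Q_1$ itself, so all other points lie strictly below, as required.

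For uniqueness, I would argue that any candidate pair $(P_1', Q_1')$ satisfying the stated property gives a segment $[P_1', Q_1']$ that is itself a supporting line from above for $P \cup Q$ with one endpoint in $P$ and one in $Q$, i.e.\ an edge of the upper hull crossing $b$. Since distinct edges of the upper hull have disjoint projections onto the $x$-axis, at most one such edge can cross $b$, which forces $(P_1', Q_1') = (P_1, Q_1)$. I do not anticipate any substantive obstacle here; the only thing one must be attentive to is using the no-three-collinear condition to upgrade the "weakly below" characterization of the upper hull to the "strictly below" statement the proposition demands.
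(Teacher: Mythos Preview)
Your proposal is correct and follows essentially the same idea as the paper: the paper simply takes the convex hull of $P\cup Q$, notes it meets the vertical line $b$ in two points, and picks the edge through the higher intersection point, which is exactly your ``unique upper-hull edge crossing $b$.'' Your write-up is more detailed than the paper's two-line argument (in particular you spell out uniqueness, which the paper merely asserts is ``not difficult to see''), but the underlying approach is identical.
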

\begin{proof}
	Consider the convex hull of $P \cup Q$, it intersects line $b$ at two points.
	Pick the two endpoints of the edge that intersects line $b$ at a higher point,
	we're done.
	It is not difficult to see the choice is unique either.
\end{proof}

We call the line segment $P_1 Q_1$ the \emph{upper cap} of $P \cup Q$.
The \emph{lower cap} can be defined analogously.

\begin{theorem}
	\label{theorem:triangulate_between_two_polylines}
	There exists an ordering $P_1, \dots, P_n$ of points in $P$,
	and an ordering $Q_1, \dots, Q_m$ of points in $Q$,
	such that $P_1 Q_1$ is the upper cap of $P \cup Q$,
	the polylines $X P_1 P_2 \dots P_n Y$ and $X Q_1 Q_2 \dots Q_m Y$ are not self-intersecting,
	and the region between these two polylines can be triangulated using only edges
	that intersects line $b$.

	Alternatively, if the condition
	``$P_1 Q_1$ is the upper cap of $P \cup Q$''
	is replaced with
	``$P_n Q_m$ is the lower cap of $P \cup Q$'',
	the statement still holds.
\end{theorem}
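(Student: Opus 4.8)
The plan is to build the two orderings greedily from the top, peeling off the upper cap at each step. We start with $P_1 Q_1$ the upper cap of $P \cup Q$, as provided by \Cref{prop:upper_cap_prop}. Now consider the point set $(P \cup Q) \setminus \{ ? \}$ where we remove whichever of $P_1$, $Q_1$ is ``less useful'' to keep; more precisely, the natural move is to remove the endpoint of the upper cap lying on the side that currently has more than one point remaining, or — thinking recursively — to always triangulate the region directly below the cap edge. Concretely, the upper cap $P_1 Q_1$ is an edge crossing line $b$; the triangle immediately below it is either $P_1 Q_1 P_2$ (if the new upper cap of $(P\setminus\{P_1\})\cup Q$ shares the vertex $Q_1$) or $P_1 Q_1 Q_2$ (symmetrically), because the next cap must share one endpoint with the previous one. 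This gives one triangle with an edge crossing $b$, and reduces the problem to the region between the polylines from the new cap downward, on a point set with one fewer point. Iterating until one side is exhausted and then fanning the remaining points of the other side off the last vertex completes the ordering and the triangulation.

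First I would make precise the ``next cap shares a vertex'' claim: if $P_1 Q_1$ is the upper cap of $S$ and we delete $P_1$, then the upper cap of $S \setminus \{P_1\}$ is either $P_2 Q_1$ for some $P_2 \in P$, or $Q_1 Q_1'$ — but in all cases it still has $Q_1$ as an endpoint, since $Q_1$ was on the upper hull of $S$ and remains extreme in the relevant direction after deleting a single point on the other side of $b$. This is the combinatorial engine: it guarantees the sequence of caps $P_1Q_1, P_2 Q_1, P_2 Q_2, \dots$ (caps alternately advancing the $P$-pointer and the $Q$-pointer) sweeps monotonically downward, and the triangles between consecutive caps are exactly the triangles of the desired triangulation, each having an edge that crosses $b$. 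Then I would check that the polylines $X P_1 \dots P_n Y$ and $X Q_1 \dots Q_m Y$ so produced are non-self-intersecting: this follows because the $P_i$ appear in the order they are peeled, and being successive vertices of shrinking upper hulls of the $P$-side, they are $x$-monotone enough / in convex position order along the boundary between the two polylines — a short convexity argument using that each $P_i$ lies below all earlier caps.

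For the alternative statement with the lower cap, I would simply run the same argument upside down: apply the already-proven version to the point set reflected through a horizontal line (which swaps $X \leftrightarrow Y$ and turns the lower cap into the upper cap), obtain the orderings, and reverse them. Since the property ``every edge of the triangulation crosses line $b$'' is symmetric under this reflection, nothing else needs to change, so the second half is essentially free once the first is established.

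The main obstacle I expect is the bookkeeping in the inductive/greedy step — precisely formulating and proving that each new upper cap shares exactly one endpoint with the previous one, and that the triangle sandwiched between two consecutive caps is non-degenerate, lies in the region between the final polylines, and has its third edge crossing $b$. Handling the boundary cases cleanly (a point of $P$ sitting on line $a$, the moment one of the two sides runs out of points so that the remaining construction is a fan from a single vertex toward $Y$, and ensuring the terminal fan edges also cross $b$) is where the argument is fiddly rather than deep. The convexity/non-self-intersection claim for the two polylines is the second-most delicate point, but it should reduce to the observation that the peeled vertices on each side occur in their natural hull order.
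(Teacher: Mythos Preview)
Your greedy cap-peeling strategy is essentially the inductive approach the paper itself names as ``one possibility'' (the paper's own proof is a two-line pointer, so you are not being compared against a detailed argument). The overall plan is sound, but your justification of the central step is wrong.

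You assert that if $P_1Q_1$ is the upper cap of $S$ and we delete $P_1$, then the upper cap of $S\setminus\{P_1\}$ ``in all cases still has $Q_1$ as an endpoint, since $Q_1$ was on the upper hull of $S$ and remains extreme in the relevant direction.'' That argument only shows $Q_1$ remains on the upper hull; it does not show $Q_1$ is still the \emph{leftmost} $Q$-point on that hull. Concretely, take $b$ to be the line $x=0$ and
\[
P_1=(-1,10),\quad P_2=(-2,0),\quad Q_1=(2,0),\quad Q'=(1,3).
\]
Here the upper cap is $P_1Q_1$, but the upper cap of $\{P_2,Q_1,Q'\}$ is $P_2Q'$, which shares neither endpoint with $P_1Q_1$; the triangle $P_1Q_1P_2$ you would output contains $Q'$.

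The approach is salvageable, but the correct lemma is the weaker dichotomy: \emph{at least one} of the two deletions yields a cap sharing an endpoint with $P_1Q_1$. This is true but needs a genuine argument. One clean way: suppose both fail, with $P'$ the rightmost $P$-point on the upper hull of $S\setminus\{Q_1\}$ (so $P'\neq P_1$) and $Q'$ the leftmost $Q$-point on the upper hull of $S\setminus\{P_1\}$ (so $Q'\neq Q_1$). Then $x_{P_1}<x_{P'}<0<x_{Q'}<x_{Q_1}$, both lie strictly below line $P_1Q_1$, and since $P',Q_1\in S\setminus\{P_1\}$ flank $Q'$ in $x$-coordinate, $Q'$ lies above line $P'Q_1$; symmetrically $P'$ lies above line $P_1Q'$. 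Writing out these two inequalities and multiplying them yields
\[
(x_{Q_1}-x_{P'})(x_{Q'}-x_{P_1}) \;<\; (x_{Q_1}-x_{Q'})(x_{P'}-x_{P_1}),
\]
which simplifies to $(x_{Q'}-x_{P'})(x_{Q_1}-x_{P_1})<0$, a contradiction. With this dichotomy in hand, your induction goes through: at each step delete the endpoint for which the new cap shares the other endpoint; the triangle between the two caps is then empty (all remaining points lie below the new cap), and since each successive cap lies strictly below the previous, the resulting polylines on each side are non-self-intersecting and the triangles tile the region between them. Your reflection argument for the lower-cap variant is fine as stated.
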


See \Cref{fig:counterexample_both_cap} for an illustration.
Also note that there may not be any ordering such that
both $P_1 Q_1$ is the upper cap and $P_n Q_m$ is the lower cap of $P \cup Q$.
The situation in the \Cref{fig:counterexample_both_cap} is such a case.

\begin{figure}
	\centering
	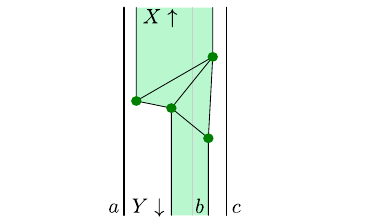
	\caption{Example case where not both caps can be endpoints.}
	\label{fig:counterexample_both_cap}
\end{figure}

\begin{proof}
	One possibility is to induct on the first line.
	But adapting the proof technique of \Cref{theorem:result_general_double_circle_3}
	is evidently better.
\end{proof}

Applying a projective transformation, we get the following.

Let $X$ be a point on the plane, with three rays $Xa$, $Xb$ and $Xc$
as in \Cref{fig:two_polyline_finite_case}.

\begin{figure}
    \centering
    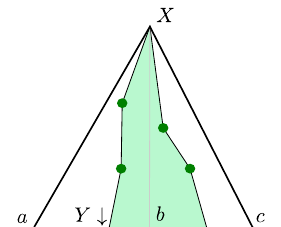
    \caption{Illustration for \Cref{theorem:triangulate_between_two_polylines_finite}.}
    \label{fig:two_polyline_finite_case}
\end{figure}

Let $P$ be a set of points inside $aXb$, and $Q$ be a set of points inside $bXc$.
Let $Y$ be a formal point defined such that for an (actual) point $A$,
``segment $AY$'' is ``the ray opposite ray $AX$''.
\begin{theorem}
	\label{theorem:triangulate_between_two_polylines_finite}
	With notation changed as above,
	the result stated in \Cref{theorem:triangulate_between_two_polylines} still holds.
\end{theorem}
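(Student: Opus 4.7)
The plan is to derive this from \Cref{theorem:triangulate_between_two_polylines} by applying a projective transformation that carries the finite point $X$ to the ``upward'' point at infinity, so that the current statement becomes the earlier one in the transformed picture.

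To build the transformation, pick a line $\ell$ through $X$ that is disjoint from $P \cup Q$; such a line exists because $P \cup Q$ lies in the union of the two adjacent angles $aXb \cup bXc$ about $X$, which in the natural picture (as in \Cref{fig:two_polyline_finite_case}) has angular measure less than $180^\circ$, so any line through $X$ into the complementary half-plane works. Let $\tau$ be a projective transformation sending $\ell$ to the line at infinity; post-composing with an affine map, I may arrange that $\tau(X)$ is the upward point at infinity and that the three rays $Xa$, $Xb$, $Xc$ map to three vertical lines $a'$, $b'$, $c'$, ordered from left to right. Because $\tau$ preserves incidence, $\tau(P)$ lies strictly between $a'$ and $b'$, $\tau(Q)$ lies strictly between $b'$ and $c'$, and general position transfers. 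The formal point $Y$ of the current statement, defined so that ``segment $AY$'' is the ray from $A$ opposite the ray $AX$, corresponds under $\tau$ exactly to the ``infinitely far below'' point of \Cref{theorem:triangulate_between_two_polylines}.

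With all hypotheses verified, I would apply \Cref{theorem:triangulate_between_two_polylines} to $(\tau(P), \tau(Q))$ to obtain orderings, non-self-intersecting polylines from upward to downward infinity, and a triangulation of the intermediate region whose edges all cross $b'$. Pulling this data back via $\tau^{-1}$: the orderings become orderings of $P$ and $Q$, the polylines become polylines of the form $X P_1 P_2 \dots P_n Y$ and $X Q_1 Q_2 \dots Q_m Y$, and the triangulation edges become straight segments in the original picture that all cross ray $Xb$.

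The main obstacle is ensuring that the pullback transports the triangulation faithfully, i.e., that no transformed edge wraps across the line at infinity (so that its preimage is a single straight segment rather than a pair of opposite rays). This is precisely why $\ell$ was chosen disjoint from $P \cup Q$: the convex hull of $P \cup Q$ lies strictly on one side of $\ell$, so every segment between two points of $P \cup Q$ stays on that side and is carried by $\tau$ to a single finite segment. The only ``infinite'' edges in the transformed triangulation are the rays that make up the two polylines, and these pull back precisely to rays toward $X$ and segments toward $Y$. Intersection properties are preserved under $\tau$, so non-self-intersection and the triangulation property pass back to the original picture without incident.
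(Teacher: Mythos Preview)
Your proposal is correct and follows exactly the paper's approach: apply a projective transformation sending $X$ to infinity, invoke \Cref{theorem:triangulate_between_two_polylines}, and transform back. The paper's own proof is a single sentence to this effect; you have simply filled in the details (choice of $\ell$, why segments do not wrap across the line at infinity, how the formal point $Y$ transfers), which the paper leaves implicit.
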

We call $X$ the \emph{pivot point}, and $Xb$ the \emph{separating ray}.
\begin{proof}
	Apply a projective transformation that sends $X$ to a point at infinity,
	apply \Cref{theorem:triangulate_between_two_polylines}, then transform back.
\end{proof}

\section{Triangulating the Area between \texorpdfstring{$3$}{3} Polylines}

We generalize the tool described in the previous section.

\begin{theorem}
	\label{theorem:triangulate_between_three_polylines}
	Let $XYZ$ be a triangle, point $F \in XYZ$,
	$A$ is a set of points inside $XYZ$,
	$\{ X, Y, Z, F \} \cup A$ is in general position.
	
	Define $P_1 = X$, $P_2 = Y$, $P_3 = Z$.

	Let $c_i$ be the number of points in the intersection of point set $A$
	and triangle $F P_i P_{i+1}$.
	Assume $c_i$ are all positive integers.

	Then there exists a permutation of points in $A$
	\[ (
		A_{1, 1}, \dots, A_{1, c_1},
		A_{2, 1}, \dots, A_{2, c_2},
	A_{3, 1}, \dots, A_{3, c_3}) \]
	such that:
	\begin{itemize}
		\item 
			let $L_i$ be the polyline $P_i A_{i, 1} \dots A_{i, c_i} P_{i+1}$
			for each $1 \leq i \leq 3$;
		\item then each polyline $L_i$ is inside triangle $F P_i P_{i+1}$;
		\item the concatenation $L_1 L_2 L_3$ forms a closed non-self-intersecting polyline
			(simple polygon);
		\item the area inside the polygon $L_1 L_2 L_3$ can be triangulated in such a way that
			no diagonal have both endpoints lie on the same polyline $L_i$.
	\end{itemize}
\end{theorem}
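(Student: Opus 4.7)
The plan is to adapt the strategy of \Cref{theorem:result_general_double_circle_3} to the three-wedge setting, using the star-shapedness of the resulting inner polygon from $F$ to satisfy the additional ``no intra-chain diagonal'' condition.

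First, for each wedge $FP_iP_{i+1}$, form an initial polyline $L_i^0$ as the upper boundary of the convex hull of $\{P_i, P_{i+1}\} \cup (A \cap FP_iP_{i+1})$ (the convex hull minus the edge $P_iP_{i+1}$). Each $L_i^0$ is a convex chain inside its wedge, and the concatenation $\Pi^0 = L_1^0 L_2^0 L_3^0$ is a simple polygon. Since $L_i^0$ subtends precisely the angular sector $\angle P_iFP_{i+1}$ at $F$, the three chains together wrap once around $F$; in particular, $F$ lies in the interior of $\Pi^0$ and $\Pi^0$ is star-shaped from $F$.

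Second, triangulate the interior of $\Pi^0$ using only inter-chain diagonals. Each $P_i$ has a convex interior angle in $\Pi^0$, since the incident chain edges lie inside the triangle angle at $P_i$, which is itself convex. Proceed by ``ear-clipping'' preferentially at junction vertices: at each step, pick a vertex whose two neighbors lie on different chains (initially $P_1, P_2, P_3$; after a clip the former neighbors of the clipped vertex become new junctions, since they are now joined by the just-added diagonal), verify the ear triangle is valid, and clip it. The diagonal of such an ear connects two vertices on different chains, hence it is inter-chain. Iterate until only a triangle remains.

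Finally, insert the leftover points (those in $A \cap FP_iP_{i+1}$ not lying on $L_i^0$) into each chain as in the proof of \Cref{theorem:result_general_double_circle_3}. For each wedge, apply \Cref{subdivide_2} to the convex polygon $C_i$ bounded by $L_i^0$ and edge $P_iP_{i+1}$, treating the vertex $W_j$ opposite each edge $T_jT_{j+1}$ of $L_i^0$ in the inner triangulation as an external anchor on the $F$-side of $L_i^0$ (the anchors lie on chains different from $L_i$, by the inter-chain property from the second step); this partitions $C_i$ into convex subregions $S_j$, one per edge of $L_i^0$. In each $S_j$, sort the leftover points by angle around $W_j$ and insert them between $T_j$ and $T_{j+1}$ on the chain, subdividing triangle $T_jT_{j+1}W_j$ into a fan from $W_j$. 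Each new diagonal is of the form $W_jV$ with $V$ on chain $L_i$ and $W_j$ on a different chain, preserving the inter-chain property. The main obstacle is proving that a valid junction ear exists at every step of the ear-clipping procedure; this requires a finer argument than Meisters' two-ears theorem, exploiting the star-shapedness of $\Pi^0$ from $F$ together with the convex structure of the initial chains $L_i^0$.
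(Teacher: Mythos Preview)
Your approach is correct, and the ``main obstacle'' you flag is not an obstacle at all. Because each $L_i^0$ is the upper boundary of the convex hull $H_i=\operatorname{conv}(\{P_i,P_{i+1}\}\cup A_i)$, any segment joining two non-adjacent vertices of $L_i^0$ lies in $H_i$; but $\operatorname{int}(H_i)$ is disjoint from $\operatorname{int}(\Pi^0)$ (they lie on opposite sides of $L_i^0$, and $F\in\Pi^0\setminus H_i$). Hence no intra-chain segment is a valid diagonal of $\Pi^0$, and \emph{every} triangulation of $\Pi^0$ is automatically inter-chain. You can therefore ear-clip arbitrarily (Meisters suffices; no need to restrict to junction vertices), and for each boundary edge $T_jT_{j+1}$ of $L_i^0$ the opposite vertex $W_j$ necessarily lies on a different chain. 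Your step 3 then goes through exactly as in \Cref{theorem:result_general_double_circle_3}; one should just note that when processing the wedges sequentially, the $W_j$'s are read from the \emph{current} triangulation, which remains inter-chain after each fan replacement.

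This is a genuinely different route from the paper's. The paper also forms the convex-hull chains $M_i$ and triangulates $M_1M_2M_3$, but then uses the observation that all non-junction vertices are reflex to conclude there are exactly two or three ears; it extracts from this either a splitting vertex $G$ (two-ear case, \Cref{prop:triangulate_three_easy_case}) or a central triangle $GHK$ (three-ear case, \Cref{prop:triangulate_three_triangle_case}), and then rebuilds the polylines $L_i$ from scratch using the two-polyline tool \Cref{theorem:triangulate_between_two_polylines_finite}. Your argument is more uniform---it reuses the \Cref{subdivide_2} mechanism of the main theorem rather than a bespoke two/three-ear case split---at the cost of depending on \Cref{subdivide_2}, which the paper's proof of this appendix theorem avoids.
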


So for example, the triangulation mentioned cannot use diagonal $P_1 A_{1, 2}$,
nor $A_{1, 1} A_{1, 3}$.
But it can use diagonal $A_{1, 1} A_{2, 1}$.

Note that of course the edges of polygon $L_1 L_2 L_3$ have both endpoints lie
on the same polyline. We only require the condition for the non-edge diagonal used
in the triangulation.

Equivalently, the requirement is that all non-edge diagonals intersect one of the segments
$F X$, $F Y$, or $F Z$.

To prove the theorem, we need two auxiliary propositions.

\begin{proposition}
	\label{prop:triangulate_three_easy_case}
	Assumptions as in \Cref{theorem:triangulate_between_three_polylines}.
	If there is a point $G \in A$ in triangle $FYZ$,
	segment $XG$ has an intersection $K$ with segment $FY$,
	triangle $FXK$ has no point in $A$,
	then the conclusion of \Cref{theorem:triangulate_between_three_polylines} holds.
\end{proposition}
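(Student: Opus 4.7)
The plan is to insert the diagonal $XG$---whose existence as a chord missing every point of $A$ is guaranteed by the empty-corridor hypothesis---into the triangulation, thereby splitting the polygon $L_1 L_2 L_3$ into two sub-polygons that are each triangulated via \Cref{theorem:triangulate_between_two_polylines_finite} in its lower-cap variant.

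First, I would partition the points of $A$ according to their position relative to line $XG$: let $A_L := A \cap FXY$ (all of which lie in triangle $XKY$ by the empty-corridor hypothesis), $A_Y := \{a \in A \cap FYZ : a \text{ lies on the } Y\text{-side of line }XG\}$, $A_Z := \{a \in A \cap FYZ : a \text{ lies on the } Z\text{-side of line }XG\}$, and $A_R := A \cap FZX$. I would set the underlying point set of $L_1$ to $A_L$, of $L_2$ to $A_Y, G, A_Z$ (in block order, with $G$ between the two blocks), and of $L_3$ to $A_R$, with within-block orderings to be determined. Including the diagonal $XG$ then splits the polygon into a $Y$-side sub-polygon with boundary $X, L_1, Y, A_Y, G, X$ and a $Z$-side sub-polygon with boundary $X, G, A_Z, Z, L_3, X$.

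Second, I would apply \Cref{theorem:triangulate_between_two_polylines_finite} to the $Y$-side sub-polygon with pivot $Y$ and separating ray directed along $YF$, choosing the two flanking rays so that $A_L \cup \{X\}$ and $A_Y \cup \{G\}$ lie strictly inside their respective sectors. Because every point of $A_L \cup A_Y$ lies on the $Y$-side of line $XG$ (for $A_L$ this uses the empty corridor; for $A_Y$ it is by definition), the edge $XG$ is the unique convex-hull edge of $A_L \cup A_Y \cup \{X, G\}$ on the $Z$-side of line $XG$, so $XG$ is the lower cap relative to pivot $Y$ and separator $YF$. The lower-cap variant of the theorem then outputs orderings with $X$ and $G$ as the terminal vertices of the two polylines, together with a triangulation of the sub-polygon whose internal diagonals all cross $YF$. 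A symmetric argument with pivot $Z$, separating ray $ZF$, and point sets $A_Z \cup \{G\}$ and $A_R \cup \{X\}$ triangulates the $Z$-side sub-polygon (with $XG$ again the lower cap). Combining the two sub-triangulations with the diagonal $XG$ yields a complete triangulation of $L_1 L_2 L_3$; every non-edge diagonal crosses one of $FX$, $FY$, $FZ$ (the $Y$-side diagonals cross $YF$, the $Z$-side diagonals cross $ZF$, and $XG$ crosses $FY$ at $K$), so no diagonal has both endpoints on the same $L_i$.

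The main obstacle is verifying the lower-cap property in each sub-problem, which amounts to showing that no point of $A$ lies on the ``wrong'' side of line $XG$. For the $Y$-side this follows because the emptiness of triangle $FXK$ forces $A_L$ entirely onto the $Y$-side of $XG$, while the definitions of $A_Y$ and $A_Z$ place each $L_2$-point in its correct half-plane; the $Z$-side case is symmetric. Collinearity edge cases (e.g.\ a point of $A$ exactly on line $XG$) are ruled out by the general$^+$ position hypothesis, and additional care is needed to confirm that the polyline output by the two-polyline theorem, restricted to the finite sub-polygon, provides the correctly-ordered sub-segment of $L_2$ rather than a polyline stretching past the diagonal.
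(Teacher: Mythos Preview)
Your proposal is correct and follows essentially the same route as the paper. The paper also splits along the diagonal $XG$ (it phrases this as extending line $XK$ to meet $YZ$ at a point $H$, which partitions $A$ into $A\cap XYH$ and $A\cap XZH$---exactly your $Y$-side and $Z$-side partition), then applies \Cref{theorem:triangulate_between_two_polylines_finite} twice with pivot $Y$, separating ray $YF$ on one side and pivot $Z$, separating ray $ZF$ on the other, using the lower-cap variant so that $XG$ is the terminal edge in both applications. Your writeup is more explicit than the paper's about why $XG$ is indeed the lower cap in each sub-problem (the paper simply asserts it), and your concern in the last paragraph about the polyline ``stretching past the diagonal'' is resolved precisely by the lower-cap property: since $X$ and $G$ are the terminal points $P_n$ and $Q_m$, the finite portion of the output polylines ends exactly at the diagonal.
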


See \Cref{fig:triangulate_triangle_easy_case} for an illustration.
We require the triangle colored green to be empty.

\begin{proof}
	Extend line $XK$ intersect segment $YZ$ at $H$.
	Apply \Cref{theorem:triangulate_between_two_polylines_finite} twice on
	\begin{itemize}
		\item Pivot point $Y$, separating ray $YF$, point set $(A \cap XYH) \cup \{ X, G \}$
			(therefore the lower cap is $XG$),
		\item Pivot point $Z$, separating ray $ZF$, point set $(A \cap XZH) \cap \{ X, G \}$
			(therefore the lower cap is also $XG$).
	\end{itemize}
	Let $L_1$ be the polyline formed by the first application,
	$L_2$ be the concatenation of the polyline from $Y$ to $G$ and the polyline from $G$ to $Z$,
	$L_3$ be the polyline formed by the second application.

	Since there is no point in $A \cap XFK$, all points in polyline $L_3$ is in triangle $FZX$.
	Since the polygon $L_1 L_2 L_3$ can be divided into two halves by segment $XG$,
	each can be triangulated with only segments intersecting $FY$ or $FZ$,
	we get the conclusion.
\end{proof}

\begin{figure}
    \centering
    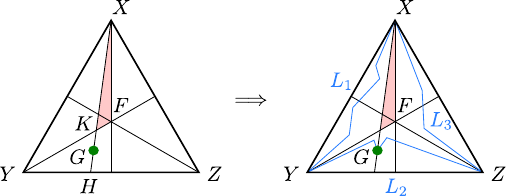
	\caption{Illustration for \Cref{prop:triangulate_three_easy_case}.}
    \label{fig:triangulate_triangle_easy_case}
\end{figure}

\begin{proposition}
	\label{prop:triangulate_three_triangle_case}
	Assumptions as in \Cref{theorem:triangulate_between_three_polylines}.
	If there are points $G$, $H$, $K$ in $A$,
	points $S \in XY$, $R \in YZ$, $T \in ZX$,
	such that $H \in FXY$, $G \in FYZ$, $K \in FZX$,
	polygon $XTKHS$, $YRGHS$, $ZRGKT$ convex,
	$A \cap GHK$ empty,
	$A \cap XTKHS \cap FYZ$ empty,
	$A \cap YRGHS \cap FZX$ empty,
	$A \cap ZRGKT \cap FXY$ empty,
	no point in $A$ is on segment $GR$, $HS$, or $KT$,
	then the conclusion of \Cref{theorem:triangulate_between_three_polylines} holds.
\end{proposition}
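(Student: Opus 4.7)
The plan is to apply \Cref{theorem:triangulate_between_two_polylines_finite} once to each of the three convex corner polygons $XTKHS$, $YRGHS$, $ZRGKT$, and then to paste the three resulting triangulations around the central triangle $GHK$, which will become a single face.

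For the wedge $XTKHS$, take pivot $X$ and separating ray $XF$; let $P = A \cap XTKHS \cap FXY$ and $Q = A \cap XTKHS \cap FZX$, so that $H \in P$ and $K \in Q$, while the hypothesis $A \cap XTKHS \cap FYZ = \varnothing$ gives $P \cup Q = A \cap XTKHS$. Since $XTKHS$ is convex with $HK$ as the edge opposite $X$, every other point of $P \cup Q$ lies strictly on the $X$-side of line $HK$; hence $HK$ is the unique lower cap of $P \cup Q$ with respect to the pivot $X$. Applying the lower-cap variant of \Cref{theorem:triangulate_between_two_polylines_finite} yields orderings $(P_1, \dots, P_{n_1} = H)$ and $(Q_1, \dots, Q_{m_1} = K)$, two non-self-intersecting polylines $X P_1 \cdots H$ and $X Q_1 \cdots K$, and a triangulation of the bounded region enclosed by these polylines together with the segment $HK$, all of whose diagonals cross $XF$. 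Repeat this construction for $YRGHS$ (pivot $Y$, caps $H$ and $G$) and for $ZRGKT$ (pivot $Z$, caps $G$ and $K$).

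Concatenating the local orderings produces the three global polylines: $L_1$ starts at $X$, runs along the $H$-side ordering of $XTKHS$ up to $H$, then continues along the reverse of the $H$-side ordering of $YRGHS$ to reach $Y$; analogously $L_2$ passes through $G$ and $L_3$ through $K$. Each $L_i$ lies inside the sub-triangle $FP_iP_{i+1}$ because the two halves do and the sub-triangle is convex. The convexity of each corner polygon ensures that its two local polylines stay inside it, so polylines of adjacent wedges meet only at the shared corner of $XYZ$ and the shared cap point, without crossing. Thus $L_1 L_2 L_3$ is a simple polygon, and the three chords $GH$, $HK$, $KG$ partition its interior into the central triangle $GHK$ (empty of $A$ by hypothesis and hence a single face) and three near-corner regions which are exactly those triangulated by the three invocations above.

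Every diagonal of the assembled triangulation is either an edge of $GHK$, which by construction connects two distinct $L_i$'s, or a diagonal inside a near-corner region, which crosses the corresponding ray $XF$, $YF$, or $ZF$ and therefore has its endpoints split between the two $L_i$'s meeting at that corner. Hence no diagonal has both endpoints on a common $L_i$, as required. The main delicate point will be the gluing: verifying that the caps $H$, $G$, $K$ chosen independently by the three applications of \Cref{theorem:triangulate_between_two_polylines_finite} really do agree, and that the two local polylines in each wedge leave and enter $GHK$ without creating a self-intersection. Both reduce to the convexity of the three corner polygons together with the emptiness of $A$ inside $GHK$ and the assumption that no point of $A$ lies on the separating segments $HS$, $GR$, $KT$.
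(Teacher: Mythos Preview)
Your proposal is correct and follows exactly the paper's approach: apply \Cref{theorem:triangulate_between_two_polylines_finite} three times with pivots $X,Y,Z$ and separating rays $XF,YF,ZF$ to the point sets $A\cap XTKHS$, $A\cap YRGHS$, $A\cap ZRGKT$, then glue the resulting polylines and triangulations around the empty central triangle $GHK$. The paper's own proof is a terse one-liner to this effect; you have supplied the details the paper omits, in particular the verification (via convexity of the corner pentagons) that $HK$, $HG$, $GK$ are the lower caps so that the three local orderings join up consistently at $G,H,K$.
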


See \Cref{fig:triangulate_three_triangle_case} for an illustration.
The assumptions state that the white triangle contains no point in $A$,
each of the three colored polygon is convex,
and if the situation is like the right panel in \Cref{fig:triangulate_three_triangle_case},
the fuchsia region has no point in $A$.

Note that we allow one of the convex polygons above to have an interior angle exactly equal to $180^\circ$.

\begin{figure}
    \centering
    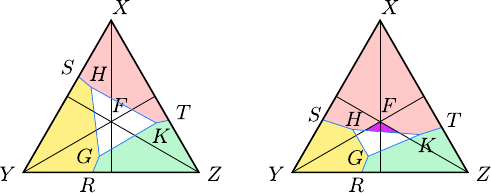
	\caption{Illustration for \Cref{prop:triangulate_three_triangle_case}.}
    \label{fig:triangulate_three_triangle_case}
\end{figure}

\begin{proof}
	Apply \Cref{theorem:triangulate_between_two_polylines_finite} three times on
	\begin{itemize}
		\item pivot point $X$, separating ray $XF$, point set $A \cap XTKHS$;
		\item pivot point $Y$, separating ray $YF$, point set $A \cap YRGHS$;
		\item pivot point $Z$, separating ray $ZF$, point set $A \cap ZRGKT$.
	\end{itemize}
	Then connect the resulting polylines together and merge the resulting triangulations together.
\end{proof}

Now we can prove \Cref{theorem:triangulate_between_three_polylines}.

\begin{proof}
	Divide $A$ into $3$ disjoint sets $A = A_1 \cup A_2 \cup A_3$,
	where $A_i$ is the points in $A$ that is in triangle $F P_i P_{i+1}$.
	Let $M_i$ be the polyline that is the convex hull of
	$\{P_i, P_{i+1}\} \cup A_i$ minus edge $P_i P_{i+1}$.

	Then the concatenation of polylines $M_1 M_2 M_3$ is closed non-self-intersecting,
	therefore it bounds a simple polygon.
	This simple polygon has $c_1+c_2+c_3+3$ edges,
	thus any triangulation of it uses $c_1+c_2+c_3+1$ triangles.

	From \cite{Meisters1975}, the polygon can be triangulated
	with at least $2$ ears (here we define an ear to be a triangle in a triangulation
	that has two edges being edge of the polygon being triangulated).
	Because the region outside each $M_i$ is convex,
	an ear of the triangulation must contain one of the vertices $P_i$,
	therefore there can be at most $3$ ears.

	Recall there are $c_1+c_2+c_3+3$ edges,
	there cannot be any triangle in the triangulation that uses $3$ edges
	of the polygon
	since $c_i>0$ for all $i$,
	there is either $2$ or $3$ ears,
	therefore the following two cases can happen:
	\begin{itemize}
		\item There are $2$ ears, and each of the remaining triangles have exactly one edge on the polyline.
		\item There are $3$ ears, and there is exactly one triangle with no edge on the polyline.
	\end{itemize}

	See \Cref{fig:triangulate_three_total} for an illustration.

	\begin{figure}
		\centering
		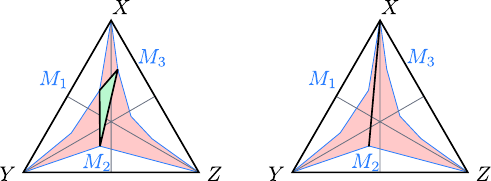
		\caption{Illustration for proof of \Cref{theorem:triangulate_between_three_polylines}.}
		\label{fig:triangulate_three_total}
	\end{figure}

	In the first case, apply \Cref{prop:triangulate_three_easy_case} (illustrated in left panel in \Cref{fig:triangulate_three_total},
	the triangle has the right orientation because of its vertex ordering along polygon $M_1 M_2 M_3$;
	and it contains no point in $A$ because it is contained in that polygon).
	In the second case, apply \Cref{prop:triangulate_three_triangle_case} (illustrated in right panel in \Cref{fig:triangulate_three_total}).

	It remains for us to prove that, in the second case, the assumptions of \Cref{prop:triangulate_three_triangle_case} holds.
	Firstly, any potential fuchsia triangle (as in $XTKHS \cap FYZ$, $YRGHS \cap FZX$, or $ZRGKT \cap FXY$) is either nonexistent
	or contained in the red region.
	Secondly, let $G$ be the vertex on $M_2$, $H$ be the vertex on $M_1$, $K$ be the vertex on $M_3$,
	then we have the ray $HG$ has already crossed $FY$ so it will not cross $FY$ again,
	so ray $HG$ intersects triangle $XYZ$ either in segment $YZ$ or in segment $ZX$;
	if we gradually rotate a ray rooted at $G$ in opposite direction to $GH$ clockwise
	until it points in opposite direction to $GK$,
	there must be some moment that it intersects triangle $XYZ$ on segment $YZ$;
	otherwise ray $KG$ would have intersected triangle $XYZ$ on segment $ZX$,
	which is impossible because segment $KG$ already intersects line $FZ$ once.
\end{proof}

\section{More old proofs}

The following is an old proof of \Cref{prop_triangle_subdivision_3}.
Essentially, it argues that the intersection of the set of points $R$
such that triangle $P_1 P_i Q$ has $c_t$ points
and the set of points $S$ such that triangle $P_1 Q P_{i+1}$ has $c_b$ points is nonempty,
and it does that by traversing the two loci simultaneously.

\begin{proof}[Proof of \Cref{prop_triangle_subdivision_3}]
	Similar to the proof of \Cref{triangle_subdivision},
	for each point $A_j$ in the triangle $P$, define $\sigma(A_j)$ to be the angle $P_i P_1 A_j$,
	then let $j_1, \dots, j_m$ be such that $\sigma(A_{j_1})< \sigma(A_{j_2})<\cdots< \sigma(A_{j_m})$.

	Draw two rays starting at $P_1$ through
	$A_{j_{(c_t+1)}}$ and $A_{j_{(c_t+c_i)}}$, intersecting segment $P_i P_{i+1}$
	at $R$ and $S$ respectively.
	See \Cref{fig:construct_RS} for an illustration.

	\begin{figure}
		\centering
		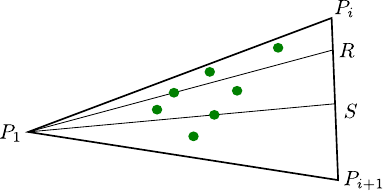
		\caption{Illustration for the construction of $R$ and $S$.}
		\label{fig:construct_RS}
	\end{figure}

	The idea is the following:
	the triangle $P$ has been divided into three parts as required,
	triangle $P_1 P_i R$,
	polygon $P_i R P_1 S P_{i+1}$,
	and triangle $P_1 S P_{i+1}$.
	We call these three parts
	the \emph{top part}, \emph{middle part} and \emph{bottom part} respectively.
	Furthermore, if the points on segment $P_1 R$ and $P_1 S$ are considered to be inside the middle part,
	then the number of points in each part is correct---%
	$c_t$, $c_i$, and $c_b$ respectively.

	The problem is that the middle part,
	polygon $P_i R P_1 S P_{i+1}$,
	is neither a triangle nor is convex.
	Our plan is thus to move $R$ and $S$ gradually toward each other
	while maintaining the same number of points in each part,
	when they coincide then we would have gotten a point $Q$.

	During the process, we assume that any point that lies exactly on the perimeter of the middle part
	(that is, on one of the segments in the polyline $P_i R P_1 S P_{i+1}$)
	belongs to the middle part.

	Now we explain how exactly the movement is performed.
	For point $R$, if it is possible to move away from $P_i$ we prefer to do so,
	otherwise we move towards $P_1$.\footnote{%
		To be clear, ``move $R$ away from $P_i$'' means ``move $R$ along the ray opposite to ray $R P_i$'',
		and ``move $R$ towards $P_1$'' means ``move $R$ along the line segment $R P_1$''.
	}
	Similarly, for point $S$, if it is possible to move away from $P_{i+1}$ we prefer to do so,
	otherwise we move towards $P_1$.
	We control the speed of both points to maintain the invariant that $R$ and $S$ has the same distance to line $P_i P_{i+1}$. (This is equivalent to the statement that $RS$ is parallel to $P_i P_{i+1}$, except when $R$ and $S$ coincide.)

	At the very beginning, there is a point on segment $P_1 R$ and $P_1 S$, so they will start off moving towards $P_1$.
	During the process, the following types of events can happen.

	\begin{enumerate}
		\item Point $R$ is moving towards $P_1$ (because there exists a point $A$ on segment $R P_1$), but segment $R P_i$ hits a new point $B$.  See \Cref{fig:movement_1} for an illustration.

			In this case, at the point where the new point would have passed through $R P_i$ to the middle part,
			we simultaneously redirect $R$ to stop moving towards $P_1$ and start moving away from $P_i$.

			\begin{figure}
				\centering
				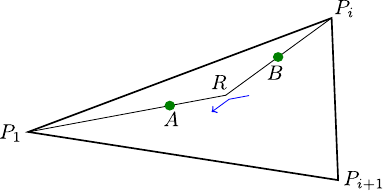
				\caption{One possible kind of event that can happen during the movement process.}
				\label{fig:movement_1}
			\end{figure}
		\item Point $R$ is moving towards $P_1$, and $R$ hits a new point.

			This case is very similar to the case above, and the handling method is the same:
			redirect $R$ to start moving away from $P_i$.
		\item Point $R$ is moving away from $P_i$, but segment $P_1 R$ hits a new point $A$.
			See \Cref{fig:movement_2} for an illustration.

			If it were to continue moving past that point, $A$ would pass from the middle part to the top part.
			So, at this point, we redirect $R$ to start moving towards $P_1$ instead.

			\begin{figure}
				\centering
				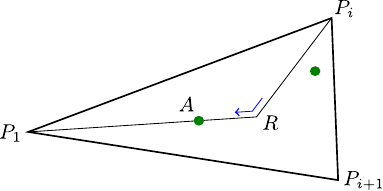
				\caption{Another possible kind of event that can happen.}
				\label{fig:movement_2}
			\end{figure}
	\end{enumerate}

	There are three ways this procedure could end.
	See \Cref{fig:movement_end} for an illustration.
	\begin{enumerate}
		\item The two points $R$ and $S$ are already coinciding, moving to the left (towards $P_1$) together, there is some point $A$ on segment $P_1 R$
			that should be counted towards the middle part, the two points $R$ and $S$ are moving towards $P_1$,
			and segment $R P_i$ hits another point $B$.

			In this case, we perform the swap---%
			count $A$ towards the top part, count $B$ towards the middle part---%
			then end the procedure.

			There is also a symmetric variant (vertically flip the diagram) where the point $B$ is found on segment $S P_{i+1}$ instead.

			(In the illustration, we draw a little gap between $R$ and $S$---%
			this is to illustrate that pont $A$ is counted towards the middle part)

		\item As above, but segment $R P_i$ hits the point $A \in P_1 R$.

			In this case we merely count $A$ towards the middle part (as it has always been)
			then end the procedure.

		\item The two points $R$ and $S$ are both moving away from $P_i$ and $P_{i+1}$ respectively, and they coincide.

			As above, ending the procedure works.
	\end{enumerate}

	\begin{figure}
		\centering
		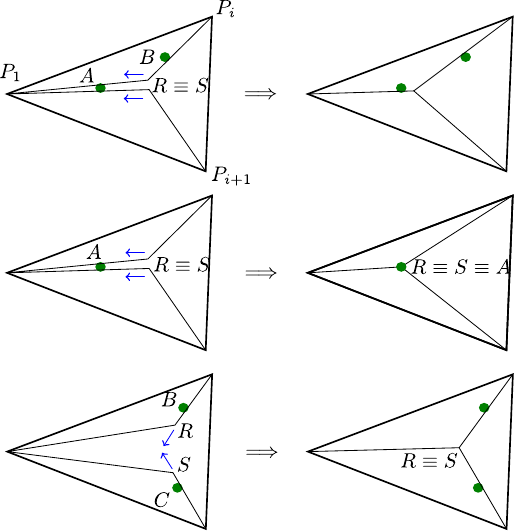
		\caption{Illustration for the possible ways the movement could end.}
		\label{fig:movement_end}
	\end{figure}

	Since no three lines are concurrent,
	in the first case there is no point on segment $R P_{i+1}$,
	and in the third case there is no point on segment $P_1 R$.
	As such, we can pick $Q$ very close to $R$ or $S$ in such a way that the points on the boundary between parts
	are counted towards the correct part.
\end{proof}

\begin{theorem}
	Let $P_0$ be the set of points in the double circle.
	Let $I' \subseteq P_0 \setminus \CH(P_0)$ be arbitrary.
	Define $P = \CH(P_0) \cup I'$.
	Then $P$ is universal.
\end{theorem}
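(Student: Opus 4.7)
The plan is to recognize that $P$ is, up to relabeling, a $(c_1,\dots,c_n)$-generalized double circle in the sense of \Cref{def_general_double_circle} where $c_i = 1$ if $A_i \in I'$ and $c_i = 0$ otherwise, and then to adapt the argument of \Cref{theorem:universality_double_circle} to accommodate the missing interior points. Let $Q = \{Q_1,\dots,Q_n\} \cup \{B_1,\dots,B_m\}$ be an arbitrary point set with $|Q| = |P|$, $\absCH{Q} = n$, $Q_i = f_0(P_i)$, and $m = |I'| = \sum_i c_i$; perturb $Q$ slightly to put it in general$^+$ position, which does not affect its order type.

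First I would apply \Cref{convex_subdivision} to $\CH(Q)$ with this vector $(c_1,\dots,c_n)$ to obtain convex regions $R_1,\dots,R_n$, each having $Q_i Q_{i+1}$ as an edge and containing exactly $c_i$ of the $B_j$ in its interior. After relabeling, for each $i$ with $c_i = 1$ let $B_i$ denote the unique interior point in $R_i$, and extend $f_0$ by setting $f(A_i) = B_i$. I would then form the closed polyline that visits $Q_1$, then $B_1$ if $c_1 = 1$, then $Q_2$, then $B_2$ if $c_2 = 1$, and so on, skipping $B_i$ whenever $c_i = 0$. Because the $R_i$ are pairwise disjoint convex regions each of which contains its ``detour'' triangle $Q_i B_i Q_{i+1}$, this polyline bounds a simple polygon $\Pi_Q$. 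Triangulate $\Pi_Q$ arbitrarily (e.g.\ by \cite{Meisters1975}), port the triangulation via \Cref{prop:porting_triangulation} to the analogous polygon $\Pi_P$ on $P$'s side (formed by cycling through $P_1$, then $A_1$ if $c_1=1$, etc.), and finish by adding the convex-hull edges of both $P$ and $Q$.

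The hard part will be verifying the visibility hypothesis of \Cref{prop:porting_triangulation}: that every diagonal of $\Pi_Q$ has its image a diagonal of $\Pi_P$. Because the constant $\varepsilon$ in \Cref{def_double_circle} is taken sufficiently small, $\Pi_P$ differs from the strictly convex polygon $\CH(P)$ only by dents of depth $O(\varepsilon)$ at each $A_i \in I'$, and the \emph{only} vertex-to-vertex segments that fail to lie inside $\Pi_P$ are the excised convex-hull edges $P_i P_{i+1}$ with $c_i = 1$, which are blocked by the dent at $A_i$. Fortunately, on $Q$'s side the corresponding segment $Q_i Q_{i+1}$ is precisely the base of the excised triangle $Q_i B_i Q_{i+1}$, so it lies outside $\Pi_Q$ and can never appear as a diagonal in any triangulation of $\Pi_Q$; thus the pathological pairs never arise on the $Q$ side and the porting hypothesis is automatically satisfied. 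Relative to the full argument of \Cref{theorem:result_general_double_circle_3}, this restricted setting is simpler: no polyline augmentation and no appeal to \Cref{subdivide_2} is required, because each $R_i$ already contains at most one interior point of $Q$ and no further subdivision is needed.
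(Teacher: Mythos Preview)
Your argument is correct and follows the same overall scheme as the paper's: obtain an association of the interior points of $Q$ to those edges $Q_iQ_{i+1}$ with $c_i=1$, form the simple polygon $\Pi_Q$, triangulate it, and port to $\Pi_P$ via \Cref{prop:porting_triangulation}. Your verification of the visibility hypothesis (that the only forbidden segments in $\Pi_P$ are the hull edges $P_iP_{i+1}$ with $c_i=1$, and that precisely these cannot occur as diagonals of $\Pi_Q$) is in fact more explicit than what the paper writes out.

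The one difference is in how the association is obtained. The paper does not invoke \Cref{convex_subdivision} with the $\{0,1\}$-vector directly; instead it inserts, for each $i$ with $c_i=0$, a dummy point $B^*_i$ very close to the midpoint of $Q_iQ_{i+1}$, applies \Cref{theorem:universality_double_circle} to the enlarged $2n$-point set, and then argues that each $B^*_i$ must land in the region for edge $Q_iQ_{i+1}$, so the genuine $B_j$'s end up matched to the edges with $c_i=1$. Your route via \Cref{convex_subdivision} with $c_i\in\{0,1\}$ is cleaner and avoids that extra step; conversely, the paper's version has the minor advantage of only citing the already-proved \Cref{theorem:universality_double_circle} rather than the more general subdivision theorem. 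Since the paper's own proof of \Cref{convex_subdivision} in the $c_i=0$ case is exactly this dummy-point trick, the two arguments are really the same, just unwound to different depths.
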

\begin{proof}
	Let $\CH(P) = \{ P_1, \dots, P_n \}$
	and $I' = \{A_1, \dots, A_m\}$
	where $0 \leq m \leq n$,

	Then let $Q = \{ Q_1, \dots, Q_n, B_1, \dots, B_m \}$ be an arbitrary set of points.
	Assume the prescribed cyclic mapping sends $P_i$ to $Q_i$.

	For each $1 \leq i \leq m$ where the edge $P_i P_{i+1}$ has the interior point \emph{deleted},
	add a point $B^*_i$ near the midpoint of $Q_i Q_{i+1}$, shifted slightly inwards,
	in such a way that the shift is so small that, and the point set $Q \cup \{ B^*_i \}$ is still
	in general position.

	Then $Q \cup \{ B^*_i \}$ has exactly $2n$ points. Use
	\Cref{theorem:universality_double_circle},
	there exists a reordering $\{ B'_1, \dots, B'_n \}$ of $\{ B_1, \dots, B_m \} \cup \{ B^*_i \}$
	such that the triangles $Q_i B'_i Q_{i+1}$ has no intersection
	except at vertices $Q_i$ of $\CH(Q)$.

	From our construction of $B^*_i$, we must have $B^*_i = B'_i$ for every $i$ such that
	$B^*_i$ is constructed.

	Construct the mapping $f$ accordingly, then get any triangulation of the polygon formed by points of $Q$,
	and use \Cref{prop:porting_triangulation} to port the triangulation to $P$
	similar to the proof of \Cref{theorem:universality_double_circle}.
	We're done.
\end{proof}

When we pass from the double circle to the generalized double circle, it is not always true that a triangulation of the polygon formed by $Q$ can be ported to a triangulation of the polygon formed by the unavoidable spanning cycle of the generalized double circle.

Nonetheless, we still have the following:

\begin{theorem}
	\label{theorem:result_general_double_circle_1}
	Let $c_1$, $c_2$, $c_3$ be positive integers.
	Let $P$ be the $(c_1, c_2, c_3)$-generalized double circle.
	Then $\mathcal T(P, Q, f_0) \leq 1$
	for all suitable point sets $Q$ in general position and cyclic mappings $f_0$,
	as long as no three lines over $Q$ are concurrent.
\end{theorem}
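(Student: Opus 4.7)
The plan is to add a single Steiner point at a ``center'' of each of $P$ and $Q$, reducing the problem to three independent sub-triangle sub-problems, one per edge of the convex hull. First apply \Cref{prop_triangle_subdivision_3} to the triangle $Q_1 Q_2 Q_3$ with $(c_t, c_i, c_b) = (c_1, c_2, c_3)$, producing a point $F$ strictly inside $Q_1 Q_2 Q_3$ such that each sub-triangle $Q_j F Q_{j+1}$ contains exactly $c_j$ of the interior points of $Q$; take $F$ as the Steiner point on the $Q$ side. On the $P$ side, place $F_P$ near the centroid of $P_1 P_2 P_3$: since the offset $\varepsilon$ defining the generalized double circle is small, each sub-triangle $P_j F_P P_{j+1}$ then contains exactly the chain $A_{j,1}, \dots, A_{j,c_j}$.

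Inside sub-triangle $Q_j F Q_{j+1}$, sort the $c_j$ interior points by angle around $F$ as $B_{j,\rho_j(1)}, \dots, B_{j,\rho_j(c_j)}$ and set $f(P_j) = Q_j$, $f(F_P) = F$, $f(A_{j,k}) = B_{j,\rho_j(k)}$. Each sub-triangle will be triangulated in two pieces, separated on the $P$ side by the chain and on the $Q$ side by the polyline $L_j = Q_j B_{j,\rho_j(1)} \cdots B_{j,\rho_j(c_j)} Q_{j+1}$. For the ``upper'' piece, fan from $F_P$ through $P_j, A_{j,1}, \dots, A_{j,c_j}, P_{j+1}$ on the $P$ side and from $F$ through $Q_j, B_{j,\rho_j(1)}, \dots, Q_{j+1}$ on the $Q$ side; these two fans are compatible under $f$ by construction. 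The $P$-side fan is valid because the natural order of the chain coincides with its angular order around $F_P$, since the convex polygon $P_j A_{j,1} \cdots A_{j,c_j} P_{j+1}$ has its convex side facing $F_P$.

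For the ``lower'' piece, triangulate the simple polygon $Q_j B_{j,\rho_j(1)} \cdots B_{j,\rho_j(c_j)} Q_{j+1}$ on the $Q$ side arbitrarily (via \cite{Meisters1975}) and port the triangulation to the $P$ side through \Cref{prop:porting_one_triangulation}; the port is valid because the corresponding polygon $P_j A_{j,1} \cdots A_{j,c_j} P_{j+1}$ on the $P$ side is convex, so every diagonal between its vertices lies inside it. The main subtlety is that the lower polygon on the $Q$ side need not be star-shaped from $Q_j$ or $Q_{j+1}$, so a naive fan-based approach from a vertex can fail; the trick is to commit to the $Q$-side polygon triangulation first and port back, exploiting the convexity of the chain polygon on the $P$ side (a structural property of the generalized double circle). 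Concatenating the three sub-triangulations on each side, together with the cevians $F P_j$ and $F_P P_j$ shared between adjacent sub-triangles and the three convex-hull edges, yields a compatible triangulation of $(P \cup \{F_P\}, Q \cup \{F\})$ with respect to $f$, which extends $f_0$, so $\mathcal{T}(P, Q, f_0) \leq 1$.
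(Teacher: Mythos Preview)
Your proof is correct and follows essentially the same route as the paper's. The paper applies \Cref{triangle_subdivision} (which for $n=3$ immediately reduces to \Cref{prop_triangle_subdivision_3}) to locate the Steiner point $B$ in $Q$, takes the centroid as the Steiner point in $P$, orders the interior points in each sector by direction from $B$, fans from $B$ over the resulting polyline, and then triangulates the outer region and ports it back to $P$ via convexity of the chain polygon; your ``upper piece'' and ``lower piece'' make these two steps explicit, and your justification for why the port succeeds (convexity of $P_j A_{j,1}\cdots A_{j,c_j} P_{j+1}$) is exactly the mechanism the paper relies on implicitly.
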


Note that unlike \Cref{theorem:existing_result_Steiner_point},
the bound is independent of the number of interior points.

We will be able to prove even a stronger theorem,
but since the proof of this is significantly simpler,
we present it first for the sake of exposition.

\begin{proof}
	Apply \Cref{triangle_subdivision} on point set $Q$ with parameters $(c_1, c_2, c_3)$
	to find a point $B$ inside $Q$, such that the parts $Q_1 B Q_2$, $Q_2 B Q_3$, $Q_3 B Q_1$
	has $c_1$, $c_2$ and $c_3$ points in $Q \setminus \CH(Q)$ respectively.

	Let $A$ be the center of the triangle $\CH(P)$.

	Then, let the Steiner point be $A$ and $B$ for point set $P$ and $Q$ respectively.
	In each part of $Q$, order the points in the interior by its direction from $B$.
	Then use that ordering to find a correspondence between points in $P$ and points in $Q$.

	Find any triangulation of the polygon on the outside of $Q$,
	then port it to $P$. We're done.
\end{proof}

\begin{theorem}
	\label{theorem:result_general_double_circle_2}
	Let $c_1$, $c_2$, $c_3$, $P$ be as above.
	Then $P$ is universal.
\end{theorem}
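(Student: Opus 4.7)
The plan is to combine \Cref{prop_triangle_subdivision_3} with \Cref{theorem:triangulate_between_three_polylines} to reduce the universality claim for the $(c_1,c_2,c_3)$-generalized double circle to a tractable polygon-triangulation problem. Write $P = \{P_1,P_2,P_3\} \cup \{A_{i,j}\}$ as in \Cref{def_general_double_circle}, and let $Q = \{Q_1,Q_2,Q_3,B_1,\dots,B_m\}$ with $m=c_1+c_2+c_3$ and $\CH(Q)=\{Q_1,Q_2,Q_3\}$, under a cyclic bijection $f_0$ sending $P_i$ to $Q_i$. After a small perturbation to put $Q$ in general$^+$ position, I would first apply \Cref{prop_triangle_subdivision_3} to the triangle $Q_1 Q_2 Q_3$ to locate a point $F$ in its interior such that each subtriangle $F Q_i Q_{i+1}$ contains exactly $c_i$ of the points $B_1,\dots,B_m$.

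Next, I would invoke \Cref{theorem:triangulate_between_three_polylines} with $(X,Y,Z)=(Q_1,Q_2,Q_3)$, interior point $F$, and point set $\{B_1,\dots,B_m\}$. This produces, for each $i$, a polyline $L_i$ of the form $Q_i \to (\text{the } c_i \text{ points in } F Q_i Q_{i+1}) \to Q_{i+1}$ lying inside $F Q_i Q_{i+1}$, such that $L_1 L_2 L_3$ bounds a simple polygon, together with a triangulation $T$ of its interior in which no non-edge diagonal has both endpoints on the same polyline $L_i$. Extending $f_0$ to $f\colon P\to Q$ by matching the $j$-th interior vertex of $L_i$ (read from $Q_i$ to $Q_{i+1}$) with $A_{i,j}$ makes $L_1 L_2 L_3$ in $Q$ map bijectively onto the unavoidable spanning cycle of $P$.

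To finish the compatible triangulation, I would triangulate each of the three exterior regions $Q_i L_i Q_{i+1}$ in $Q$ arbitrarily (they are simple polygons with no interior points, hence triangulable), combine them with $T$ to obtain a triangulation of $\CH(Q)$, and port the result to $P$ via \Cref{prop:porting_triangulation}. The porting succeeds provided two geometric facts about $P$: first, the exterior region $P_i A_{i,1} \dots A_{i,c_i} P_{i+1} P_i$ is convex, because the chain $P_i A_{i,1} \dots A_{i,c_i} P_{i+1}$ is convex by \Cref{def_general_double_circle}, so any triangulation of the corresponding region in $Q$ yields valid diagonals in $P$; second, any cross-chain segment $A_{i,k} A_{i',k'}$ with $i\ne i'$ lies inside the inner polygon of $P$ bounded by the unavoidable spanning cycle, because the two chains are geometrically separated and the segment does not enter either thin exterior strip.

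The main obstacle is the second check. The inner polygon of $P$ is \emph{not} convex in general---it has mild reflex angles at each interior chain vertex $A_{i,j}$, and indeed a same-chain diagonal such as $P_i A_{i,c_i}$ exits through the thin strip along $P_i P_{i+1}$. This is precisely why the structural constraint on $T$ supplied by \Cref{theorem:triangulate_between_three_polylines}---that no non-edge diagonal joins two vertices of the same $L_i$---is exactly what makes the porting succeed: once that obstruction is ruled out, the remaining cross-chain diagonals stay inside the inner polygon by the geometric separation of distinct chains, and the exterior triangulations port trivially by the convexity of each exterior strip.
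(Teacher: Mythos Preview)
Your proposal is correct and follows essentially the same route as the paper: locate a splitting point $F$ via \Cref{prop_triangle_subdivision_3} (the paper goes through \Cref{theorem:result_general_double_circle_1}, which amounts to the same thing), invoke \Cref{theorem:triangulate_between_three_polylines} to get the constrained inner triangulation, then port to $P$. Your write-up is in fact more explicit than the paper's about why the porting succeeds; the cleanest justification for the cross-chain step is simply that the spanning cycle of $P$ is unavoidable, so no segment of $P$ can cross it, forcing every cross-chain diagonal to lie inside the inner polygon.
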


\begin{proof}
	{
		\hfuzz=4pt
		Let 
		the points of the $(c_1, c_2, c_3)$-generalized double circle $P$
		be 
		$\{P_1, \dots, P_n, \allowbreak
			A_{1, 1}, \dots, A_{1, c_1}, \allowbreak
			A_{2, 1}, \dots, A_{2, c_2}, \allowbreak
		A_{3, 1}, \dots, A_{3, c_3}\}$,
		notation as in \Cref{def_general_double_circle}.

	}

	Given any point set $Q = \{ Q_1, \dots, Q_n, B_1, \dots, B_m \}$
	in general position such that $|P|=|Q|$, $\absCH{P}=\absCH{Q}$.

	Since the existence of a triangulation only depends on the order type of $Q$,
	we may perturb $Q$ slightly so that no three lines are concurrent.

	Then apply \Cref{theorem:result_general_double_circle_1} to get a point $F \in \CH(Q)$ as above.

	See the left panel in \Cref{fig:triangle_divided_into_parts} for an illustration.

	\begin{figure}
		\centering
		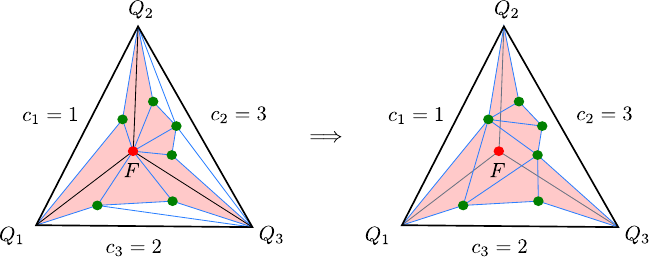
		\caption{Illustration for proof of \Cref{theorem:result_general_double_circle_2}.}
		\label{fig:triangle_divided_into_parts}
	\end{figure}

	By construction, the triangle $Q_i F Q_{i+1}$ contains $c_i$ points in $\{B_1, \dots, B_m \}$
	for each $1 \leq i \leq 3$.

	The plan is the following.
	We want to find a bijection
	$f \colon
	\{ A_{1, 1}, \dots, A_{1, c_1}, \allowbreak
		A_{2, 1}, \dots, A_{2, c_2}, \allowbreak
	A_{3, 1}, \dots, A_{3, c_3}\}
	\to \{ B_1, \dots, B_m \}$
	such that the polygon \[
		Q_1
		f(A_{1, 1}) f(A_{1, 2}) \dots f(A_{1, c_1})
		Q_2
		f(A_{2, 1})  \dots f(A_{2, c_2})
		Q_3
		f(A_{3, 1})  \dots f(A_{3, c_3})
	\]
	is not self-intersecting, and the region inside it can be triangulated
	without using the Steiner point $F$,
	and each segment used in the triangulation of this polygon
	must intersect at least one of the segments $Q_1 F$, $Q_2 F$, $Q_3 F$.
	
	See \Cref{fig:triangle_divided_into_parts} for an illustration.

	To do that, we just apply \Cref{theorem:triangulate_between_three_polylines} and we're done.
\end{proof}

\begin{remark}
	If we fix the bijection $f$ in the proof above, there might not be a satisfying triangulation.
	See \Cref{fig:counterexample_no_inner_triangulation} for an example.
	On the triangle on the left, any triangulation of the polygon marked in red
	must contain the three blue segments.
	On the triangle on the right, we modify the bijection,
	and there exists a triangulation of the polygon marked in red
	that can be extended to a compatible triangulation
	with the $(2, 2, 2)$-generalized double circle.

	\begin{figure}
		\centering
		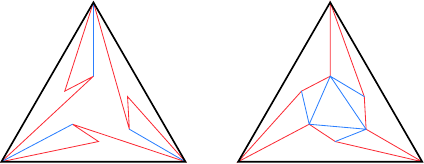
		\caption{Example of a bijection $f$ that gives no compatible triangulation.}
		\label{fig:counterexample_no_inner_triangulation}
	\end{figure}
\end{remark}

\subsection{Alternative Proof of \Cref{theorem:triangulate_between_three_polylines}}

This is my previous proof, which is less neat.

	We only need to consider the case where \Cref{prop:triangulate_three_easy_case} does not happen.

\begin{figure}
    \centering
    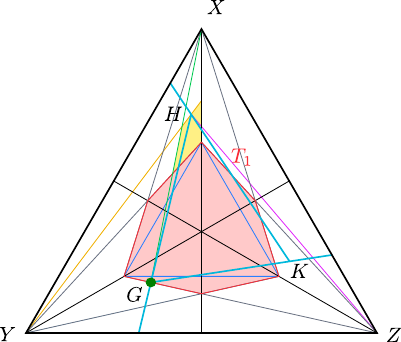
    \caption{Caption}
    \label{fig:triangulate_three_hard_case}
\end{figure}

	Draw concentric equilateral triangles around $F$ that is a shrunken version of $XYZ$.
	(One of those is drawn in blue in \Cref{fig:triangulate_three_hard_case}.)
	Then draw line segments connecting the vertices of triangle $XYZ$ to the vertices of that triangle,
	let $T_1$ be the hexagon enclosed by these segments. (Colored red in \Cref{fig:triangulate_three_hard_case}.)

	Pick $T_1$ so that it is the smallest such hexagon containing a point in $A$,
	and let $G \in A$ be the point in $T_1$.
	(Thus $G$ is on the boundary of $T_1$.)

	By construction of $T_1$, the interior of $T_1$ has no point in $A$.

	Without loss of generality assume $G$ is in triangle $FYZ$, and $G$ and $Y$ are on the same side of line
	$FX$.

	Since \Cref{prop:triangulate_three_easy_case} does not happen, there is some
	point $H \in A$
	that is in both triangle $FXG$ and triangle $FXY$.
	Again, draw concentric hexagons as above, and let $T_2$ be the smallest hexagon
	that contains such a point $H$.
	(In \Cref{fig:triangulate_three_hard_case}, point $H$ is a corner of the cyan triangle; or alternatively the intersection of the yellow segment and the fuchsia segment.)

	By construction of $T_2$, part of the interior of $T_2$ (colored yellow in \Cref{fig:triangulate_three_hard_case})
	has no point in $A$.
	Because there is no point of $A$ in the interior of $T_1$, $H$ and $X$ must be on the same side of line $FZ$.

	Again, since \Cref{prop:triangulate_three_easy_case} does not happen, there is some
	point $K \in A$ that is in both triangle $FZH$ and triangle $FZX$.
	(In \Cref{fig:triangulate_three_hard_case}, point $K$ is a corner of the cyan triangle.)
	Pick such a point $K$ that is nearest to line $GH$.
	Then there is usually no other point in $A$ in triangle $GHK$.

	Extend rays $KH$, $HG$, $GK$. By \Cref{prop:intersecting_edge} and similar arguments, they must intersect
	segment $XY$, $YZ$, $ZX$ respectively.
	These rays divide the set of points in $A$ into three convex regions,
	apply \Cref{prop:triangulate_three_triangle_case}.

\begin{figure}
	\centering
	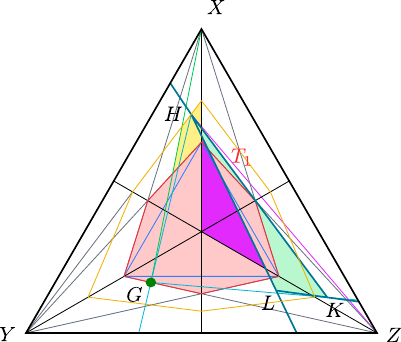
	\caption{Caption}
	\label{fig:triangulate_three_hard_case_2}
\end{figure}

	There is a special case where there \emph{is} another point in $A$ in triangle $GHK$.
	Let $L$ be one such point such that angle $KHL$ is minimum.
	(Thus the region colored green in \Cref{fig:triangulate_three_hard_case_2} has no point in $A$.)

	By the choice of $K$, there is no point in $A \cap FZH \cap FZX$ closer to $GH$ than $K$.
	But any point in the interior of triangle $GHK$ must be closer to $GH$ than $K$.
	Therefore $L \in GHK \setminus (FZH \cap FZX)$.

	Point $L$ must not be in triangle $FZX$,\footnote{
		Let $M$ be the intersection of $ZH$ and $ZX$.
		Then $FZH \cap FZX = FZM$, and $L \in GHK \setminus FZM \subseteq GHZ \setminus FZM = GHMFZ$.
		This polygon has no intersection with $FZX$.
	}
	nor in triangle $FXY$,\footnote{
		Let $M$ be as above.
		We have $GHK \cap FXY \subseteq GHZ \cap FXY \subseteq GHMF$.
		This is contained in the red and yellow regions.
	}
	thus it is in triangle $FYZ$.
	Furthermore $G$ and $L$ are on different sides of line $FX$.\footnote{Otherwise $L$ would be in the red region.}
	See \Cref{fig:triangulate_three_hard_case_2} for an illustration.

	Then we can replace $G$ with $L$, noticing triangle $KHL$ has no point in $A$,\footnote{
		Triangle $KHL$ is fully covered by the red, yellow and green colored regions.
	}
	and the fuchsia region in \Cref{fig:triangulate_three_hard_case_2} has no point in $A$,\footnote{
		By the same argument that proves $L$ cannot be in triangle $FZX$.
	}
	then extend rays $KH$, $HL$, $LK$ and apply \Cref{prop:triangulate_three_triangle_case} exactly as above.

	There is an alternative case, see \Cref{fig:triangulate_three_hard_case_3} for an illustration.
	This case poses no difficulty, the triangle colored fuchsia near $F$ is included in $T_1$
	so it has no point in $A$.

\begin{figure}
    \centering
    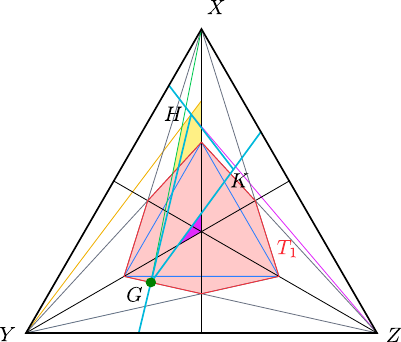
    \caption{Caption}
    \label{fig:triangulate_three_hard_case_3}
\end{figure}

Now we fill in some missing details in the proof.

\begin{proposition}
	\label{prop:intersecting_edge}
	Let $G$ be in triangle $FYZ$ such that $G$ and $Y$ are on the same side of line $FX$.
	Let $K$ be in triangle $FZX$.
	Then ray $GK$ intersects triangle $XYZ$ on segment $XZ$.
\end{proposition}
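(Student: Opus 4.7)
The plan is to reduce the claim to showing $K$ lies in triangle $GXZ$. Once this is known, the ray from vertex $G$ through the interior point $K$ exits triangle $GXZ$ through the opposite side $XZ$: as a straight line through $G$, it meets line $GX$ and line $GZ$ only at $G$, so after leaving $G$ it stays strictly on the $Z$-side of $GX$ and on the $X$-side of $GZ$, forcing it to cross segment $XZ$ before leaving the closed triangle. Since $XZ$ is also a side of $XYZ$, this is the required conclusion.

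To obtain $K \in $ triangle $GXZ$, I will show the stronger fact that $F$ itself lies in triangle $GXZ$; convexity then gives triangle $FZX \subseteq $ triangle $GXZ$, which contains $K$. The argument uses barycentric coordinates with respect to $XYZ$. Writing $F = (f, g, h)$ with $f, g, h > 0$ and $G = (a, b, c)$, the hypothesis $G \in $ triangle $FYZ$ translates into the two half-plane inequalities $bf > ag$ and $cf > ah$ (same side of line $FZ$ as $Y$, and same side of line $FY$ as $Z$), while the hypothesis that $G$ lies on the $Y$-side of line $FX$ becomes $bh > cg$.

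These three inequalities are exactly what is needed to place $F$ inside triangle $GXZ$. The line $GZ$ has barycentric equation $bx = ay$: at $F$ this evaluates to $bf - ag > 0$ and at $X$ to $b > 0$, so $F$ lies on the same side of $GZ$ as $X$. The line $GX$ has equation $cy = bz$: at $F$ it gives $cg - bh < 0$ and at $Z$ it gives $-b < 0$, so $F$ lies on the same side of $GX$ as $Z$. Finally, $F$ and $G$ are on the same side of $XZ$ because both lie in the interior of $XYZ$. Thus $F$ is in the interior of triangle $GXZ$, and the conclusion follows.

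The main obstacle is really just sign-bookkeeping for the barycentric line equations; the substantive content of the proof is the recognition that the three conditions imposed on $G$ encode precisely the containment $F \in $ triangle $GXZ$, after which the ray-exiting observation is routine. A purely synthetic proof is possible (e.g.\ ruling out the ray crossing $XY$ or $YZ$ by case analysis on the position of $K$ relative to the cevians $FX, FY, FZ$), but it is markedly less transparent than the barycentric calculation above.
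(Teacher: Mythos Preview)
Your proof is correct, but the route differs from the paper's. The paper argues synthetically: since $G$ and $K$ lie on opposite sides of line $FX$ (by hypothesis on $G$ and because $K\in FZX$) and on opposite sides of line $FZ$ (because $G\in FYZ$ and $K\in FZX$), the segment $GK$ has already crossed both lines, so the ray beyond $K$ can meet neither again and is therefore trapped in triangle $FXZ$, whose only side on $\partial(XYZ)$ is $XZ$. Your argument instead establishes the containment $F\in\operatorname{int}(GXZ)$ via barycentric inequalities, whence $K\in FXZ\subseteq GXZ$ and the ray from the vertex $G$ through an interior point must exit across the opposite side $XZ$.

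Both arguments are short; the paper's avoids coordinates entirely, while yours makes the half-plane bookkeeping fully explicit and is arguably less prone to slips (indeed the paper writes ``line $FY$'' where ``line $FZ$'' is what makes the argument go through). One small point of exposition: you list three inequalities $bf>ag$, $cf>ah$, $bh>cg$, but your verification of $F\in\operatorname{int}(GXZ)$ uses only the first and third; the condition $cf>ah$ (that $G$ lies on the $Z$-side of $FY$) is never invoked, so ``the three conditions encode precisely the containment'' slightly overstates things. This does not affect correctness.
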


See \Cref{fig:triangulate_three_intersection_helper} for an illustration.
Point $G$ is restricted to the yellow region, and point $K$ is restricted to the blue region.

\begin{figure}
    \centering
    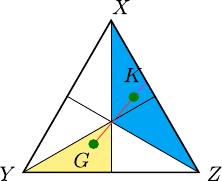
	\caption{Illustration for \Cref{prop:intersecting_edge}.}
    \label{fig:triangulate_three_intersection_helper}
\end{figure}
\begin{proof}
	Since $G$ and $K$ are on different sides of line $FX$,
	and two lines intersect at most once,
	the ray $GK$ cannot intersect line $FX$ again.
	Similarly, ray $GK$ cannot intersect line $FY$ again.
	Therefore the intersection point must be in the region $FXZ$.
\end{proof}

\subsection{Failed Attempts at Proof of \Cref{theorem:triangulate_between_three_polylines}}

It is unfortunate that the proof of \Cref{theorem:triangulate_between_three_polylines}
is not very symmetric.
The following are some alternative attempts.

We use a projective transformation to transform $\{ X, Y, Z, F \}$ to three vertices
and the center of a equilateral triangle.
As such we only need to consider the case where $XYZ$ is an equilateral triangle
and $F$ is its center.

	We only need to consider the case where \Cref{prop:triangulate_three_easy_case} does not happen.

Consider the set of points of $A$ that is on the same side as $Y$ of line $FX$
(there must be some point because $c_1>0$).
Within those, pick a point $G$ such that angle $FXG$ is minimized.
Because \Cref{prop:triangulate_three_easy_case} does not happen, $G$ is in triangle $FXY$.

As such, the part of the plane spanned by angle $FXG$ contains no point in $A$.
See \Cref{fig:triangulate_three_better_illustration} for an illustration---%
the green region contains no point in $A$.

\begin{figure}
    \centering
    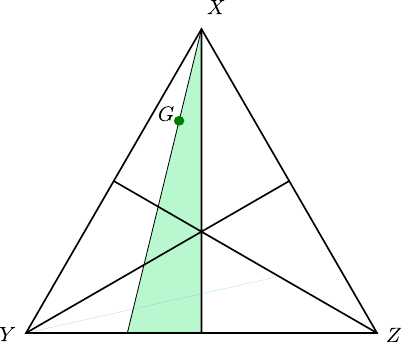
    \caption{Caption}
    \label{fig:triangulate_three_better_illustration}
\end{figure}

Perform the same procedure for the remaining vertices and half planes to get $6$ rays,
with $2$ starting from each vertex.
See \Cref{fig:triangulate_three_better_illustration_2} for an illustration,
the green region is bounded by the $6$ rays obtained above.

\begin{figure}
    \centering
    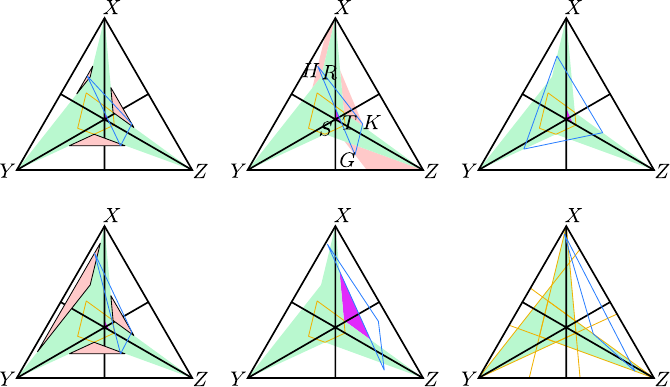
    \caption{Caption}
    \label{fig:triangulate_three_better_illustration_2}
\end{figure}

Note that we discard from the green region the part of the ray that goes through the common region at the center
(colored in yellow).

The remaining part of the triangle after deleting the green region
forms three triangles (white in \Cref{fig:triangulate_three_better_illustration_2}).
Let $R$, $S$ and $T$ be the tip of these triangles.

In triangles $XYR$, $YZS$, $ZXT$, there are $c_1$, $c_2$ and $c_3$ points in $A$ respectively.
This is because there's no point in $A$ in the green region.

Now, the problematic part.
We wish to pick $H \in XYR$, $G \in YZS$, $K \in ZXT$
in some way such that
\Cref{prop:triangulate_three_triangle_case} can be applied on triangle $GHK$.
For that to happen, we need triangle $GHK$ to not contain any point in $A$,
have the correct orientation
(thus not like the bottom right panel in \Cref{fig:triangulate_three_better_illustration_2}),
and any possible fuchsia triangle has no point.

In the illustrations in \Cref{fig:triangulate_three_better_illustration_2}, triangle $GHK$ is colored in blue.

\begin{itemize}
	\item First attempt: pick $G$, $H$ and $K$ to maximize the distance to the boundary of triangle $XYZ$.

		By the construction, the red regions in top left panel in \Cref{fig:triangulate_three_better_illustration_2}
		contains no point in $A$,
		and their outer side contains at least one point in $A$.

		Problem: triangle $GHK$ might still contain some point in $A$.

	\item Second attempt: pick $G$, $H$ and $K$ such that the area of triangle $GHK$ is minimum.
		By construction, there is no point in $A$ in triangle $FXY$ that has distance to line $GK$ less than that of $H$, for example.

		The problem: we cannot ensure the orientation is correct.
		See bottom right panel
		in \Cref{fig:triangulate_three_better_illustration_2}.

		And if we force the orientation to be correct, the situation may be in bottom middle panel
		in \Cref{fig:triangulate_three_better_illustration_2}.
		The fuchsia triangle cannot be guaranteed to have no point in $A$.

	\item Third attempt: pick points on the boundary of the green region.

		Problem: see top right panel
		in \Cref{fig:triangulate_three_better_illustration_2}.
\end{itemize}

\end{document}